\long\def\symbolfootnote[#1]#2{\begingroup%
\def\thefootnote{\fnsymbol{footnote}}\footnote[#1]{#2}\endgroup}
\newtheorem{Theorem}{Theorem}[section]
\newtheorem{Lemma}[Theorem]{Lemma}
\newtheorem{Corollary}[Theorem]{Corollary}
\newtheorem{Proposition}[Theorem]{Proposition}
\newtheorem{claim}{Claim}
\theoremstyle{definition}
\newtheorem{Remark}[Theorem]{Remark}
\newtheorem{Example}[Theorem]{Example}
\numberwithin{equation}{Theorem}
\begin{document}

\title{Nearly Gorenstein local rings defined by maximal minors of a $2 \times n$ matrix}

\author[S. Kumashiro]{Shinya Kumashiro}
\address{National Institute of Technology, Oyama College, 771 Nakakuki, Oyama, Tochigi, 323-0806, Japan}
\email{skumashiro@oyama-ct.ac.jp}

\author[N. Matsuoka]{Naoyuki Matsuoka}
\address{Department of Mathematics, School of Science and Technology, Meiji University, 1-1-1 Higashi-mita, Tama-ku, Kawasaki 214-8571, Japan}
\email{naomatsu@meiji.ac.jp}

\author[T. Nakashima]{Taiga Nakashima}
\address{Department of Mathematics, School of Science and Technology, Meiji University, 1-1-1 Higashi-mita, Tama-ku, Kawasaki 214-8571, Japan}
\email{tiger07dahe@gmail.com}

\thanks{2020 {\em Mathematics Subject Classification.} 13H10, 13A02, 13A15.}
\keywords{Nearly Gorenstein rings, Almost Gorenstein rings, Canonical ideals, Trace ideals, Minimal free resolutions, Numerical semigroup rings}
\thanks{Shinya Kumashiro was supported by JSPS KAKENHI Grant Number JP21K13766 and by Grant for Basic Science Research Projects from the Sumitomo Foundation (Grant number 2200259).}

\newcommand{\fka}{\mathfrak{a}}
\newcommand{\fkn}{\mathfrak{n}}
\newcommand{\fkm}{\mathfrak{m}}
\newcommand{\fkp}{\mathfrak{p}}
\newcommand{\fkM}{\mathfrak{M}}

\newcommand{\mult}{\operatorname{e}}
\newcommand{\Mat}{\operatorname{Mat}}
\newcommand{\trace}{\operatorname{tr}}
\newcommand{\canon}{\operatorname{K}}
\newcommand{\detid}{\operatorname{I}}
\newcommand{\rank}{\operatorname{rank}}

\newcommand{\Hom}{\operatorname{Hom}}
\newcommand{\Ker}{\operatorname{Ker}}
\newcommand{\image}{\operatorname{Im}}
\newcommand{\PF}{\operatorname{PF}}

\newcommand{\Spec}{\operatorname{Spec}}
\newcommand{\Ass}{\operatorname{Ass}}

\newcommand{\nint}[1]{[#1]}

\maketitle

\setlength{\baselineskip} {15pt}

\begin{abstract}
We investigate the nearly Gorenstein property of a local ring defined by the maximal minors of a specific $2 \times n$ matrix with entries in the formal power series ring $k[[X_1, X_2, \ldots , X_n]]$ over a field $k$. Our findings allow us to present numerous concrete examples, such as nearly Gorenstein rings that are not almost Gorenstein and vice versa.
\end{abstract}

\section{Introduction}

Let $R$ be a commutative ring and $M$ an $R$-module. The {\it trace ideal} of $M$ is defined as
$$
\trace_R(M) = \sum_{f \in \Hom_R(M,R)} \image f.
$$
Several recent studies on trace ideals have been conducted in various situations (\cite{DKT, DL, GIK, HHS, HKS, KT, L, LP}).
In particular, Herzog, Hibi, and Stamate \cite{HHS} introduced a new class of Cohen-Macaulay local rings, called {\it nearly Gorenstein} rings, through profound research on the trace ideal of the canonical module.
The Cohen-Macaulay and Gorenstein properties are fundamental in commutative ring theory, with Gorenstein rings being a special type of Cohen-Macaulay rings.
Due to the rarity of Gorenstein rings among Cohen-Macaulay rings, there is a need for intermediate classes that bridge this significant gap.
The notion of nearly Gorenstein rings is one such candidate.
Another generalization of Gorenstein rings has been developed in, for example, \cite{BF, CGKM, GMP, GTT}, known as {\it almost Gorenstein} rings.
According to \cite[Proposition 6.1]{HHS}, every one-dimensional almost Gorenstein local ring is also a nearly Gorenstein ring.
In the present article, we first explore numerical semigroup rings, where the defining ideal is generated by the maximal minors of a $2 \times n$ matrix, and we examine the subtle distinctions between nearly Gorenstein and almost Gorenstein properties.
Next, we consider a higher-dimensional analog of such rings.

Let $R$ be a Cohen-Macaulay local ring with maximal ideal $\fkm$ and assume that $R$ possesses the canonical module $\canon_R$.
We say that $R$ is a nearly Gorenstein local ring if $\trace_R(\canon_R) \supseteq \fkm$ (\cite[Definition 2.2]{HHS}). 
Besides, $R$ is said to be an almost Gorenstein local ring if there exists an exact sequence
$$
0 \to R \to \canon_R \to C \to 0
$$
of $R$-modules such that $\mu_R(C) = \mult^0_\fkm(C)$ (\cite[Definition 3.3]{GTT}), where $\mu_R(C)$ (resp. $\mult^0_\fkm(C)$) stands for the number of a minimal system of generators of $C$ (resp. the multiplicity of $C$ with respect to $\fkm$).
It is well-known that $R$ is a Gorenstein ring if and only if $\canon_R$ is isomorphic to $R$ as an $R$-module, which is equivalent to the equality $\trace_R(\canon_R) = R$.
Consequently, every Gorenstein local ring is both nearly Gorenstein and almost Gorenstein.
Furthermore, both of these properties can be seen as attempts to measure the proximity to Gorenstein rings using the canonical module.

We now consider numerical semigroup rings. Let $n > 0$ and $a_1, a_2, \ldots, a_n > 0$ be positive integers with the greatest common divisor of $a_1, a_2, \ldots, a_n$ being $1$. 
We define $H$ as the numerical semigroup generated by $a_1, a_2, \ldots, a_n$, given by
$$
H = \left<a_1, a_2, \ldots, a_n\right> = \left\{\sum_{i=1}^n \lambda_i a_i ~\middle|~ 0 \le \lambda_1, \lambda_2, \ldots, \lambda_n \in \mathbb{Z}\right\}.
$$
Throughout this article, we assume that the system $a_1, a_2, \ldots, a_n$ of generators of $H$ is minimal.
Let $k$ be a field, and consider the $k$-algebra homomorphism
$$
\varphi_H : S=k[[X_1, X_2, \ldots, X_n]] \to V=k[[t]]
$$
defined by $\varphi_H(X_i) = t^{a_i}$ for each $1 \leq i \leq n$, where $k[[X_1, X_2, \ldots, X_n]]$ and $k[[t]]$ denote the formal power series rings over $k$. The image of $\varphi_H$ is called the {\it numerical semigroup ring} of $H$, denoted by $k[[H]]$, and the kernel of $\varphi_H$ is called the {\it defining ideal} of $k[[H]]$, denoted by $I_H$. We denote the image of $X_i$ in $k[[H]]$ by $x_i = t^{a_i}$ for each $1 \le i \le n$.

For the case of $n=3$, the structure of $I_H$ is completely described by Herzog's breakthrough result in \cite{H}. Specifically, if $k[[H]]$ is not Gorenstein, then the defining ideal takes the form
$$
I_H = \detid_2
\begin{pmatrix}
X_2^{m_2} & X_3^{m_3} & X_1^{m_1} \\
X_1^{\ell_1} & X_2^{\ell_2} & X_3^{\ell_3}
\end{pmatrix}
$$
for some positive integers $m_1, m_2, m_3, \ell_1, \ell_2, \ell_3$, where $\detid_2(\mathcal{M})$ denotes the ideal generated by $2$-minors of a matrix $\mathcal{M}$. In this case, we have $\trace_{k[[H]]}(\canon_{k[[H]]}) = (x_i^{m_i}, x_i^{\ell_i} \mid i = 1,2,3)$ according to \cite[Corollary 3.4]{HHS}, which implies that $k[[H]]$ is nearly Gorenstein if and only if $\min\{m_i, \ell_i\} = 1$ for all $i = 1,2,3$. Moreover, in the same case, $k[[H]]$ is almost Gorenstein if and only if $m_1 = m_2 = m_3 = 1$ or $\ell_1 = \ell_2 = \ell_3 = 1$ (\cite[Corollary 4.2]{GMP}, \cite[Corollary 3.3]{NNW}).
However, for the case of $n \ge 4$, the form of the defining ideals is not well-established, and therefore, we do not have an explicit statement.
Here, let us discuss numerical conditions for nearly Gorenstein and almost Gorenstein properties of $R=k[[H]]$ from a different perspective.
As is well-known that the pseudo-Frobenius numbers of the numerical semigroup $H$, which are integers not in $H$ but become elements of $H$ after adding a positive element in $H$, correspond to the degrees of the generators of $\canon_R$ as an $R$-module (see \cite{GW}), providing valuable information about $\canon_R$. In fact, $R$ is almost Gorenstein if and only if there is a kind of symmetry in the pseudo-Frobenius numbers (see \cite[Proposition 2.3]{GKMT}, \cite[Theorem 2.4]{N}, for detail).
However, for the nearly Gorenstein property, except in the case of $n=3$, no simplified criterion is known as for the almost Gorenstein property. 
This is because there is no mechanical method known for computing $\trace_R(\canon_R)$ in terms of pseudo-Frobenius numbers, even though $\trace_R(\canon_R)$ is generated by monomials in $t$.
Under these situations, to further advance the analysis for the case of $n \ge 4$, this article will focus on investigating a numerical semigroup ring with a determinantal defining ideal in the following form:
$$
I_H = \detid_2
\begin{pmatrix}
X_2^{m_2} & X_3^{m_3} & \cdots & X_n^{m_n} & X_1^{m_1} \\
X_1^{\ell_1} & X_2^{\ell_2} & \cdots & X_{n-1}^{\ell_{n-1}} & X_n^{\ell_n}	
\end{pmatrix} \qquad (*)
$$
for some positive integers $m_1, m_2, \ldots, m_n, \ell_1, \ell_2, \ldots, \ell_n$. 
This generalization of the $n=3$ case allows us further to explore the nearly Gorenstein property of numerical semigroup rings.
Such numerical semigroups and semigroup rings have been studied in \cite{GKMT, KM} to analyze the relationship between the defining ideal of $k[[H]]$ and the pseudo-Frobenius numbers of $H$.

This article presents a computational method for $\trace_R(\canon_R)$ under the assumption $(*)$ by utilizing the Eagon-Northcott resolution (\cite{EN}) of $k[[H]]$. 
Additionally, this method leads us to provide the following result.

\begin{Theorem}\label{MT}
	Let $H$ be a numerical semigroup and assume the equality $(*)$ holds for some positive integers $m_1, m_2,\ldots ,m_n, \ell_1,\ell_2,\ldots , \ell_n$.
	Then the following are equivalent.
	\begin{enumerate}[{\rm (1)}]
		\item $k[[H]]$ is nearly Gorenstein.
		\item After suitable permutation of $a_1, a_2,\ldots ,a_n$, one of the following is satisfied.
		\begin{enumerate}[{\rm (a)}]
			\item $m_i = 1$ for all $1 \le i \le n$.
			\item $m_i = 1$ for all $2 \le i \le n$ and $\ell_i = 1$ for all $1 \le i \le n-2$.
		\end{enumerate}
	\end{enumerate}
\end{Theorem}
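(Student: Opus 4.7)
The plan is first to extract $\canon_R$ and then $\trace_R(\canon_R)$ from the Eagon--Northcott resolution of $R=k[[H]]$ over $S$, and then to translate the nearly Gorenstein condition $\trace_R(\canon_R)\supseteq\fkm$ into a purely combinatorial condition on the exponent data $m_i,\ell_i$.

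Since $I_H$ is generated by the maximal minors of a $2\times n$ matrix and $R$ is Cohen--Macaulay of codimension $n-1$, the Eagon--Northcott complex is a minimal free $S$-resolution of $R$ whose last term is free of rank $n-1$. Dualizing yields a presentation of $\canon_R\cong\operatorname{Ext}^{n-1}_S(R,S)$ as an $R$-module with $n-1$ explicit generators. Realizing $\canon_R$ as a fractional ideal inside $V=k[[t]]$, these generators become monomials corresponding to the $n-1$ pseudo-Frobenius numbers of $H$, each expressible as a concrete integer combination of the products $a_j m_j$ and $a_j\ell_j$. Because $\canon_R$ is then a monomial fractional ideal, both $(R:\canon_R)$ and $\trace_R(\canon_R)=\canon_R\cdot(R:\canon_R)$ are monomial ideals, and one can write down an explicit list of monomial generators of $\trace_R(\canon_R)$ in terms of $x_1,\dots,x_n$ and the $m_i,\ell_i$. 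This is the ``computational method'' announced in the introduction, and is where the bulk of the preparatory work lies.

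Since $\fkm=(x_1,\dots,x_n)$, the nearly Gorenstein condition reduces to the combinatorial question of when each $x_i$ lies in this explicitly-computed trace. For $(2)\Rightarrow(1)$, a direct substitution into the formula verifies that each $x_i$ appears as a generator of $\trace_R(\canon_R)$ whenever the exponent data has form (a) or (b). For the converse $(1)\Rightarrow(2)$, I would argue by contraposition: first isolate the symmetries of the cyclic pattern $(*)$ that correspond to legitimate permutations of $a_1,\dots,a_n$ (cyclic shifts of columns, and the row swap combined with order reversal), and then show that any exponent data not reducible to (a) or (b) by these symmetries must leave some $x_i$ outside $\trace_R(\canon_R)$.

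The main obstacle is this contrapositive direction. It must reconcile the asymmetric form of (b) (where $\ell_{n-1}$ and $\ell_n$ are unconstrained while $\ell_1,\dots,\ell_{n-2}$ are forced to be $1$) with the actual symmetry group acting on the data, and then produce, for each obstructive configuration, a specific $x_i$ together with an explicit exponent inequality certifying that $x_i\notin\trace_R(\canon_R)$. This combinatorial case-analysis, rather than any conceptual difficulty, is where I expect the real work to lie.
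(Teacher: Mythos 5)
Your plan correctly identifies the Eagon--Northcott resolution, the dualization to get a presentation of $\canon_R$, and the overall architecture of the argument (direct verification for $(2)\Rightarrow(1)$, contraposition plus a symmetry analysis for $(1)\Rightarrow(2)$). Where it diverges from the paper — and where I think there is a real gap — is in the mechanism for deciding membership $x_i\in\trace_R(\canon_R)$.

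You propose to realize $\canon_R$ as a monomial fractional ideal from the pseudo-Frobenius numbers, then compute $(R:\canon_R)$ and $\trace_R(\canon_R)=\canon_R\cdot(R:\canon_R)$ explicitly as monomial ideals in terms of the $m_i,\ell_i$. This is precisely what the paper states is the obstruction: they emphasize in the introduction that, unlike the $n=3$ case, for $n\ge 4$ ``there is no mechanical method known for computing $\trace_R(\canon_R)$ in terms of pseudo-Frobenius numbers,'' and nothing in your plan indicates how to compute $(R:\canon_R)$ for arbitrary exponents. Even though the pseudo-Frobenius numbers of these semigroups are known, the colon $(R:\canon_R)$ encodes an intersection condition over all of them, and writing down its monomial generators uniformly in $m_i,\ell_i$ is itself an unsolved combinatorial problem of the same order of difficulty as the theorem you are trying to prove. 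So the proposal reduces the theorem to an explicit computation that is asserted but not carried out, and that the paper deliberately avoids.

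The paper instead never computes the whole trace ideal. It uses the syzygy-theoretic description: $\trace_R(\canon_R)$ is the sum of ideals $(f_1,\dots,f_{n-1})R$ over all row vectors $\mathbf{f}$ with $\mathbf{f}\cdot N=0$, where $N$ is the (reduction of the) last map in the Eagon--Northcott resolution (Proposition \ref{Kumashiro}, Corollary \ref{Comp_canontrace}, Proposition \ref{3.1}). Crucially, because $\dim_k A_p\le 1$ for a numerical semigroup ring, a monomial $t^u$ lies in the trace if and only if it appears as an entry of some such $\mathbf{f}$. This lets the paper test a single $x_i$ at a time. The key structural input is Lemma \ref{key}: if $m_i\ge 2$ and $x_i$ appears in $\mathbf{f}$, then it must appear in the \emph{first} slot $f_1$; if $\ell_i\ge 2$, then in the last slot $f_{n-1}$. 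Combined with the fact that $R$ is a domain (Remark \ref{new2.25}), this yields the necessary condition $m_i=1$ or $\ell_i=1$ for each $i$ (Corollary \ref{key_cor}), and the ``only if'' direction is then finished by propagating the constraints along the matrix $N$ (the inductive Claim inside the proof of Theorem \ref{main}), not by exhibiting a closed formula for the trace. You would need to replace your ``explicit list of monomial generators'' step with an argument of this kind, or else actually supply the unspecified computation of $(R:\canon_R)$.
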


In Section 3, we give a proof of Theorem \ref{MT}. To complete the proof, we discuss how to compute $\trace_R(\canon_R)$ by using a minimal presentation of $\canon_R$ in Section 2. 

Under our assumption, it is known that condition (a) in Theorem \ref{MT} is equivalent to the condition that $k[[H]]$ is an almost Gorenstein local ring (e.g., see \cite[Theorem 7.8]{GTT}). 
According to Theorem \ref{MT}, although the class of nearly Gorenstein rings is certainly larger than that of almost Gorenstein rings, the difference is very limited because, for any $n \ge 3$, there are only three parameters $m_1, \ell_{n-1}, \ell_n$ that can be freely chosen in the complement class.

In Section 4, we consider the higher-dimensional analog of local rings discussed in Section 3. 
Let $I, J$ be subsets of $\{1,2,\ldots, n\}$ (admitting empty set). Then, the local rings dealt with in Section 4 have the following defining ideals in the formal power series ring $k[[\{X_i\}_{1\le i \le n}, \{Y_i\}_{i \in I}, \{Z_{j}\}_{j \in J}]]$:
$$
\detid_2
\begin{pmatrix}
V_2 & V_3 & \cdots & V_1 \\
U_1 & U_2 & \cdots & U_n
\end{pmatrix}
$$
where 
$$
\begin{array}{lcl}
V_r = \begin{cases}
 	X_{r}^{m_{r}} + Y_{r} & (r \in I)\\
 	X_{r}^{m_{r}} & (\text{otherwise})
 \end{cases}
 &
 \text{ and }
 &
U_r = \begin{cases}
 	X_{r}^{\ell_{r}} + Z_r & (r \in J)\\
 	X_{r}^{\ell_{r}} & (\text{otherwise})
 \end{cases}
\end{array}
$$
for $1 \le r \le n$.
We will find that when such a ring is nearly Gorenstein, its dimension is at most $4$ for $n=3$ and at most $3$ for $n=4$. 
Moreover, we will also observe that when $n \ge 5$, the resulting rings obtained through this construction have dimension at most $2$ if they are nearly Gorenstein (Theorems \ref{newAGcase} and \ref{newnonAGcase}).

Throughout this article, for positive integers $r$ and $q$, $\Mat_{r \times q}(A)$ denotes the set of all $r \times q$ matrices with entries in a commutative ring $A$.
We regard $M = (m_{ij}) \in \Mat_{r \times q}(A)$ as the canonical $A$-linear map $M : A^{\oplus q} \to A^{\oplus r}$ defined by $A^{\oplus q} \ni \mathbf{a} \mapsto M{\cdot}\mathbf{a} \in A^{\oplus r}$ corresponding the standard basis of $A^{\oplus q}$ and $A^{\oplus r}$.

\section{Computation of the canonical trace ideals}

In this section, we discuss a computational method for determining the trace ideal of the canonical module using a representation matrix of a free resolution. 
We begin with the following proposition, essentially found by Vasconcelos \cite[Remark 3.3]{V} (see also Herzog-Hibi-Stamate \cite[Proposition 3.1]{HHS}). 
Let us include a brief proof for the sake of completeness. 

\begin{Proposition}[cf. {\cite[Proposition 3.1]{HHS}}]\label{Kumashiro}
	Let $A$ be a commutative ring and $X$ be a non-zero $A$-module.
	Assume that there exists an exact sequence
	$A^{\oplus q} \overset{M}{\longrightarrow} A^{\oplus r} \overset{\varepsilon}{\to} X \to 0$
	of $A$-modules for some $0 < q,r \in \mathbb{Z}$ and $M \in \Mat_{r \times q}(A)$. We define
	$$
	\Lambda = \left\{\mathbf{f} = \begin{pmatrix}f_1 & f_2 & \cdots & f_r \end{pmatrix} \in \Mat_{1 \times r}(A) ~\middle|~\text{$\mathbf{f}{\cdot}M = 0$ in $\Mat_{1 \times q}(A)$}\right\}.
	$$
	Then, $$\trace_A(X) = \sum_{(f_1~f_2~\cdots~f_r) \in \Lambda} (f_1, f_2,\ldots , f_r)A.$$ 
\end{Proposition}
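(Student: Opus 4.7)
The plan is to identify $\Hom_A(X,A)$ explicitly with the set $\Lambda$ of matrix annihilators by dualizing the given presentation, and then read off the image of each homomorphism from the coordinates of the corresponding element of $\Lambda$.

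First, I would apply the left-exact contravariant functor $\Hom_A(-,A)$ to the presentation $A^{\oplus q} \overset{M}{\to} A^{\oplus r} \overset{\varepsilon}{\to} X \to 0$, obtaining
$$
0 \to \Hom_A(X,A) \overset{\varepsilon^*}{\to} \Hom_A(A^{\oplus r},A) \overset{M^*}{\to} \Hom_A(A^{\oplus q},A).
$$
Let $e_1,\ldots,e_r$ denote the standard basis of $A^{\oplus r}$. Under the standard identification $\Hom_A(A^{\oplus r},A) \cong \Mat_{1\times r}(A)$ that sends $\varphi$ to the row vector $(\varphi(e_1),\ldots,\varphi(e_r))$, and the analogous identification for $q$, the dual map $M^*$ becomes right-multiplication by $M$ on row vectors, exactly because for any column $\mathbf{a} \in A^{\oplus q}$ one has $(\varphi \circ M)(\mathbf{a}) = \varphi(M\cdot\mathbf{a})$, which in row-vector form is $\mathbf{f}\cdot M\cdot \mathbf{a}$. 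Consequently, $\Hom_A(X,A) \cong \Ker(M^*) = \Lambda$.

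Next, I would compute $\varphi(X)$ for each $\varphi \in \Hom_A(X,A)$ by restricting through the surjection $\varepsilon$. If $\varphi$ corresponds to $\mathbf{f} = (f_1,\ldots,f_r) \in \Lambda$, then since $X$ is generated by $\varepsilon(e_1),\ldots,\varepsilon(e_r)$ and $\varphi(\varepsilon(e_i)) = f_i$ by construction, we obtain
$$
\varphi(X) = \sum_{i=1}^{r} \varphi(\varepsilon(e_i))\, A = (f_1, f_2, \ldots, f_r)\, A.
$$
Summing over all $\varphi \in \Hom_A(X,A)$, equivalently over all $\mathbf{f} \in \Lambda$, yields the desired equality $\trace_A(X) = \sum_{\mathbf{f} \in \Lambda}(f_1,\ldots,f_r)A$.

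There is no serious conceptual obstacle here; the argument is essentially a dualization of the presentation combined with the fact that a homomorphism from a finitely generated module is determined by its values on generators. The only point requiring care is the row/column convention: one must verify that the description of $\Lambda$ via $\mathbf{f}\cdot M = 0$ in $\Mat_{1\times q}(A)$ matches the dual map $M^*$ coming from $M: A^{\oplus q}\to A^{\oplus r}$, which is a routine check once the identification $\Hom_A(A^{\oplus r},A) \cong \Mat_{1\times r}(A)$ is fixed as above.
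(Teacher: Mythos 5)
Your proof is correct and follows essentially the same approach as the paper's: both arguments rest on the observation that dualizing the presentation identifies $\Hom_A(X,A)$ with row vectors annihilated by $M$, and that the image of the homomorphism corresponding to $\mathbf{f}$ is $(f_1,\ldots,f_r)A$. You package the two inclusions the paper proves separately into a single identification $\Hom_A(X,A) \cong \Ker(M^*) = \Lambda$ via left-exactness of $\Hom_A(-,A)$, which is a slightly tidier phrasing of the same idea.
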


\begin{proof}
	Let $\mathbf{f} =\begin{pmatrix}f_1 & f_2 & \cdots & f_r \end{pmatrix} \in \Lambda$.
	Since $\mathbf{f}{\cdot}M = 0$ in $\Mat_{1\times q}(A)$, the chain complex
	$$
	A^{\oplus q} \overset{M}{\longrightarrow} A^{\oplus r} \overset{\mathbf{f}}{\to} A
	$$
	of $A$-modules induces a surjection $\varphi : X \cong A^{\oplus r} / \image M \to (f_1, f_2,\ldots ,f_r)A$. 
	Hence $f_i \in \image \varphi \subseteq \trace_A(X)$ for all $1 \le i \le r$. 
	Therefore, $\trace_A(X) \supseteq \sum_{\mathbf{f} \in \Lambda} (f_1, f_2,\ldots , f_r)A$.
	
	To prove the reverse inclusion, let $x_i = \varepsilon(\mathbf{e}_i)$, where $\{\mathbf{e}_i\}_{1 \le i \le r}$ is the standard basis of  $A^{\oplus r}$.
	Let $\varphi \in \Hom_A(X,A)$ and put $f_i = \varphi(x_i)$ for each $1 \le i \le r$.
	Considering the following commutative diagram
	$$
	\xymatrix{
	A^{\oplus q} \ar[r]^M & A^{\oplus r} \ar[r]^\varepsilon \ar[rd] & X \ar[d]^{\varphi} \ar[r] & 0\\
	&& (f_1,f_2, \ldots ,f_r)A&
	}
	$$
	we have a chain complex $A^{\oplus q} \overset{M}{\longrightarrow} A^{\oplus r} \overset{\mathbf{f}}{\to} (f_1, f_2,\ldots ,f_r)A \to 0$ of $A$-modules.
	In other words, $$\begin{pmatrix}f_1 & f_2 & \cdots & f_r\end{pmatrix} {\cdot} M = 0,$$ 
	so $\begin{pmatrix}f_1 & f_2 & \cdots & f_r\end{pmatrix} \in \Lambda$.
	Hence, $\image \varphi = (f_1,f_2,\ldots , f_r)A \subseteq \sum_{\mathbf{f} \in \Lambda} (f_1, f_2,\ldots ,f_r)A$ as required.
\end{proof}


\begin{Corollary}[cf. {\cite[Corollary 3.2]{HHS}}]\label{Comp_canontrace}
	Let $S$ be a Gorenstein local ring with the maximal ideal $\fkn$. Let $\fka$ be an ideal of $S$. Assume that $R=S/\fka$ is a Cohen-Macaulay local ring. We suppose there exists a minimal $S$-free resolution 
	\begin{align}\label{eq1}
	0 \to S^{\oplus r} \overset{{}^t\!M}{\longrightarrow} S^{\oplus q} \to \cdots \to S \to R \to 0
	\end{align}
	of $R$, where $0 < q,r \in \mathbb{Z}$, $M \in \Mat_{r \times q}(S)$, and ${}^t\!M$ denotes the transpose of $M$. We put
	$$
	\Lambda = \left\{\mathbf{f} = \begin{pmatrix}f_1 & f_2 & \cdots & f_r \end{pmatrix} \in \Mat_{1 \times r}(S) ~\middle|~\text{$\mathbf{f}{\cdot}M = 0$ in $\Mat_{1 \times q}(R)$}\right\}.
	$$
	Then $$\trace_R(\canon_R) = \sum_{(f_1~f_2~\cdots~f_r) \in \Lambda} (f_1, f_2,\ldots , f_r)R.$$
\end{Corollary}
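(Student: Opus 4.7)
The plan is to reduce to Proposition~\ref{Kumashiro} by producing a minimal presentation of $\canon_R$ as an $R$-module out of the given $S$-resolution.

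First, I would recall that since $S$ is Gorenstein and $R = S/\fka$ is Cohen--Macaulay, Auslander--Buchsbaum gives $\mathrm{pd}_S(R) = \dim S - \dim R =: c$, so the displayed resolution (\ref{eq1}) has length exactly $c$, with ${}^t\!M$ the final differential. Then $\canon_R \cong \Ext^c_S(R,S)$, and this Ext can be computed by applying $\Hom_S(-,S)$ to the resolution. Under the canonical identifications $\Hom_S(S^{\oplus r},S) \cong S^{\oplus r}$ and $\Hom_S(S^{\oplus q},S) \cong S^{\oplus q}$, the dual of ${}^t\!M$ becomes $M$, and the $c$-th cohomology is the cokernel of the last map. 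Hence one obtains an exact sequence of $S$-modules
$$
S^{\oplus q} \xrightarrow{\,M\,} S^{\oplus r} \to \canon_R \to 0.
$$

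Next, since $\canon_R$ is annihilated by $\fka$, tensoring this sequence over $S$ with $R$ (right-exactness) produces an exact sequence of $R$-modules
$$
R^{\oplus q} \xrightarrow{\,\bar{M}\,} R^{\oplus r} \to \canon_R \to 0,
$$
where $\bar{M}$ is the reduction of $M$ modulo $\fka$. I would then apply Proposition~\ref{Kumashiro} to $A = R$ and $X = \canon_R$ with this presentation. This yields
$$
\trace_R(\canon_R) = \sum_{\bar{\mathbf{f}} \in \bar{\Lambda}} (\bar f_1,\ldots,\bar f_r)R,
$$
where $\bar\Lambda = \{\bar{\mathbf{f}} \in \Mat_{1\times r}(R) \mid \bar{\mathbf{f}}\cdot \bar{M} = 0\text{ in }\Mat_{1\times q}(R)\}$.

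Finally, I would observe that the condition $\bar{\mathbf{f}}\cdot \bar{M} = 0$ in $\Mat_{1\times q}(R)$ is identical to $\mathbf{f}\cdot M = 0$ in $\Mat_{1\times q}(R)$ for any $S$-lift $\mathbf{f}$ of $\bar{\mathbf{f}}$; so the sets $\bar\Lambda$ and the image of $\Lambda$ modulo $\fka$ coincide, and the identity of the corollary follows. The only technical point to watch is the bookkeeping when dualizing the resolution (making sure that the transpose of ${}^t\!M$ is $M$ and that the cokernel, not the kernel, appears), but this is routine given the Cohen--Macaulay hypothesis forcing vanishing of all lower Ext modules.
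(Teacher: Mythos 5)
Your argument is correct and follows essentially the same route as the paper: observe that $\canon_S \cong S$, dualize the minimal $S$-free resolution to obtain the $S$-presentation $S^{\oplus q} \xrightarrow{M} S^{\oplus r} \to \canon_R \to 0$, apply $R \otimes_S -$ to get an $R$-presentation, and invoke Proposition~\ref{Kumashiro}. The only cosmetic difference is that you spell out the Auslander--Buchsbaum count and the identification $\canon_R \cong \Ext^c_S(R,S)$ explicitly, and you add a harmless sentence at the end tracking the difference between $\Lambda \subseteq \Mat_{1\times r}(S)$ and its image in $\Mat_{1\times r}(R)$; the paper leaves both of these implicit. (One small remark: the vanishing of the lower Ext modules is not actually needed to identify the $c$-th cohomology with $\mathrm{coker}(M)$ --- that follows simply because ${}^t\!M$ is the last differential in the resolution --- but mentioning it does no harm.)
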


\begin{proof}
Observe that $\canon_S \cong S$ because $S$ is Gorenstein. Taking the $S$-dual of the exact sequence \eqref{eq1}, we obtain the exact sequence
$$
S^{\oplus q} \to S^{\oplus r} \to \canon_R \to 0
$$
of $S$-modules. By applying the functor $R\otimes_S -$ to this exact sequence, we have the presentation
$$
R^{\oplus q} \to R^{\oplus r} \to \canon_R \to 0
$$
for the $R$-module $\canon_R$. 
Consequently, using Proposition \ref{Kumashiro}, we get that $$\trace_R(\canon_R) = \sum_{\mathbf{f} \in \Lambda} (f_1, f_2,\ldots ,f_r)R.$$
\end{proof}

\begin{Remark}\label{new2.25}
Suppose, in addition to the hypothesis of Corollary \ref{Comp_canontrace}, that $R$ is a domain. Then for each $(f_1~f_2~\cdots~f_r) \in \Lambda$, either $(f_1, f_2,\ldots , f_r)R \cong \canon_R$ or $(f_1, f_2,\ldots , f_r)R=0$ holds.
\end{Remark}

\begin{proof}
For the presentation $R^{\oplus q} \to R^{\oplus r} \to \canon_R \to 0$, we have the commutative diagram
$$
	\xymatrix{
	R^{\oplus q} \ar[r] \ar[rd]_{\text{zero map}} & R^{\oplus r} \ar[r] \ar[d] & \canon_R \ar[r] & 0\\
	&(f_1,f_2, \ldots ,f_r)R& &
	}
	$$
By the universal property of kernel, we have a canonical surjection $\varepsilon: \canon_R \to (f_1,f_2, \ldots ,f_r)R$. If $(f_1, f_2,\ldots , f_r)R$ is nonzero, the ranks of $\canon_R$ and $(f_1,f_2, \ldots ,f_r)R$ are one; hence, the rank of $\Ker \varepsilon$ is zero. This implies that $\Ker \varepsilon=0$ since $\canon_R$ is torsionfree. Thus, $\varepsilon$ is an isomorphism. 
\end{proof}

If $A$ is a graded ring and $X$ is a finitely generated graded $A$-module, $\trace_A(X)$ is a graded ideal of $A$. However, even for a homogeneous element $a \in \trace_A(X)$, it is not necessarily true that there exists $\mathbf{f} =\begin{pmatrix}f_1 & f_2 & \cdots & f_r\end{pmatrix}\in \Lambda$ such that $a \in \{f_1, f_2,\ldots , f_r\}$, where $\Lambda$ is as in Proposition \ref{Kumashiro}.
As for the graded case, we have the following.

\begin{Proposition}\label{graded}
	Let $A=\bigoplus_{\mathbf{a} \in \mathbb{Z}^n} A_{\mathbf{a}}$ be a $\mathbb{Z}^n$-graded ring such that $A_\mathbf{0} = k$ is a field and $X$ a non-zero $\mathbb{Z}^n$-graded $A$-module, where $\mathbf{0}$ denotes the zero vector. 
	Assume that there exists an exact sequence
	$G \overset{M}{\longrightarrow} F \overset{\varepsilon}{\to} X \to 0$
	of graded $A$-modules where $F$ and $G$ are finitely generated graded free $A$-modules of rank $q$ and $r$, and $M \in \Mat_{r \times q}(A)$ with homogeneous entries.
	Let $\mathbf{a} \in \mathbb{Z}^n$ such that $\dim_k A_\mathbf{a} = \ell<\infty$.
	If $A_{\mathbf{a}} \subseteq \trace_A(X)$, then there exists $U=(u_{ij}) \in \Mat_{\ell \times r}(A)$ such that
	\begin{enumerate}[{\rm (a)}]
		\item for all $1\le i \le \ell$, there exists $1 \le j_i \le r$ such that $u_{ij_i} \in A_{\mathbf{a}}$,
		\item $A_{\mathbf{a}} = \left< u_{ij_i} \mid 1 \le i \le \ell\right>_k$, and
		\item $U{\cdot}M = 0$ in $\Mat_{\ell \times q}(A)$.
	\end{enumerate}
	Here, $\left<u_1, u_2,\ldots , u_n\right>_k$ denotes the $k$-vector space spanned by the basis $\{u_1, u_2, \ldots , u_n\}$.
\end{Proposition}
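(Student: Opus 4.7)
The plan is to combine Proposition~\ref{Kumashiro} with the graded structure of $\Lambda$. Recall from that proposition that $\trace_A(X)$ is generated by the entries of rows of $\Lambda$. Since $M$ has homogeneous entries, $\Lambda$ is a graded $A$-submodule of $\Mat_{1\times r}(A)$ under the grading inherited from $F^{*}$, and in particular it is closed under $A$-scalar multiplication and under passage to homogeneous components.

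For each $j \in \{1, 2, \ldots, r\}$ I would introduce the $k$-subspace
\[
V_j = \{\, f \in A_{\mathbf{a}} : f \text{ appears as the $j$-th entry of some homogeneous row in } \Lambda\,\} \subseteq A_{\mathbf{a}},
\]
and reduce the problem to showing the equality $V_1 + V_2 + \cdots + V_r = A_{\mathbf{a}}$. For the nontrivial inclusion, I would take any $a \in A_{\mathbf{a}} \subseteq \trace_A(X)$, write it via Proposition~\ref{Kumashiro} as $a = \sum_{\alpha, j} c_{\alpha, j} g_{\alpha, j}$ with $\mathbf{g}^{(\alpha)} = (g_{\alpha,1}, \ldots, g_{\alpha,r}) \in \Lambda$ and $c_{\alpha, j} \in A$, decompose the scalars $c_{\alpha, j}$ and the rows $\mathbf{g}^{(\alpha)}$ into homogeneous pieces, and keep only those terms whose product lies in $A_{\mathbf{a}}$. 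For any such surviving term, the row $c_{\alpha, j} \mathbf{g}^{(\alpha)}$ still belongs to $\Lambda$ by $A$-closure and has the element $c_{\alpha, j} g_{\alpha, j} \in A_{\mathbf{a}}$ as its $j$-th entry; this places the term in $V_j$.

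Once $A_{\mathbf{a}} = V_1 + V_2 + \cdots + V_r$ is established, I would extract a $k$-basis $u_1, u_2, \ldots, u_\ell$ of $A_{\mathbf{a}}$ from the union $V_1 \cup V_2 \cup \cdots \cup V_r$; for each $i$, pick $j_i$ with $u_i \in V_{j_i}$ and a homogeneous row $\mathbf{u}^{(i)} \in \Lambda$ realizing $u_i$ as its $j_i$-th entry. Assembling $U$ with $\mathbf{u}^{(i)}$ as its $i$-th row then yields (a), (b), and (c) simultaneously, since each row already lives in $\Lambda$. The main obstacle in the plan is the bookkeeping required in the homogeneous-decomposition step: one must verify that after collecting the degree-$\mathbf{a}$ contributions, each surviving summand genuinely occurs as a single entry of a single row of $\Lambda$ rather than being scattered across several rows, which is precisely what the graded $A$-module structure on $\Lambda$ guarantees.
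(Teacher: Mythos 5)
Your proposal is correct and takes a genuinely different route from the paper. The paper works directly with homogeneous maps in $\Hom_A(X,A)$: starting from a basis $v_1,\ldots,v_\ell$ of $A_{\mathbf{a}}$ it writes each $v_i$ as a sum $\sum_j f_j(x_{ij})$, invokes Lemma~\ref{grd_lem} once to replace each $v_i$ by a single homogeneous term $g_i(x_i)$, then expands each $x_i$ over the images $y_j=\varepsilon(\mathbf{e}_j)$ of the standard basis of $F$ and invokes Lemma~\ref{grd_lem} a second time; the rows of $U$ end up being scalar multiples of $(g_{i_\alpha}(y_1),\ldots,g_{i_\alpha}(y_r))$, which lie in $\Lambda$ because $g_{i_\alpha}\circ\varepsilon\circ M=0$. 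You instead factor through Proposition~\ref{Kumashiro}, which already describes $\trace_A(X)$ via the entries of rows in $\Lambda$, and then exploit that $\Lambda=\ker(-\cdot M)$ is a \emph{graded} $A$-submodule of $F^{*}$ because $M$ is homogeneous, so that every homogeneous component of a row in $\Lambda$ is again in $\Lambda$. Collecting the degree-$\mathbf{a}$ components of a Kumashiro-presentation of any $a\in A_{\mathbf{a}}$ yields $A_{\mathbf{a}}=V_1+\cdots+V_r$, and a single basis extraction from $\bigcup_j V_j$ finishes. Both arguments are sound and of comparable length; yours has the merit of reusing the already-proven Proposition~\ref{Kumashiro} instead of redoing a $\Hom$-computation, and of making the graded structure of $\Lambda$ the explicit engine. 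The one step you leave compressed---but which is indeed fine---is the extraction of a $k$-basis of $A_{\mathbf{a}}$ from $\bigcup_j V_j$: this uses $A_{\mathbf{a}}=\sum_j V_j$ together with each $V_j$ being a $k$-subspace (closed under addition and $A_{\mathbf{0}}=k$-scaling because the homogeneous rows realizing two elements of $V_j$ necessarily share the same $F^{*}$-degree), and it plays exactly the role that Lemma~\ref{grd_lem} plays in the paper.
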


For the proof of Proposition \ref{graded}, we prepare the following lemma.

\begin{Lemma}\label{grd_lem}
	Let $V$ be a $k$-vector space of dimension $\ell < \infty$. We take a $k$-basis $\{v_i \mid 1\le i \le \ell\}$ of $V$. Suppose that for each $1\le i \le \ell$, there exists an equality
	$$
	v_i = u_{i1} + u_{i2} + \cdots + u_{ip_i}
	$$
	such that $p_i \ge 1$ and $u_{ij} \in V$ for all $1 \le j \le p_i$.
	Then, for each $1 \le i \le \ell$, we can find $1 \le j_i\le p_i$ such that $V=\left<u_{1j_1}, u_{2j_2},\ldots , u_{\ell j_\ell}\right>_k$ holds true.
\end{Lemma}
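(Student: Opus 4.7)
The plan is to pass to coordinates relative to the given basis $\{v_1, \ldots, v_\ell\}$ and reduce the selection problem to a single determinantal identity. First I would write each $u_{ij} = \sum_{s=1}^{\ell} c_{ij,s}\, v_s$ with uniquely determined $c_{ij,s} \in k$, and observe that the hypothesis $v_i = u_{i1} + u_{i2} + \cdots + u_{ip_i}$, combined with linear independence of $\{v_s\}$, forces the relations
\[
\sum_{j=1}^{p_i} c_{ij,s} = \delta_{is} \qquad (1 \le i, s \le \ell).
\]

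Next, for each selection $J = (j_1, \ldots, j_\ell)$ with $1 \le j_i \le p_i$, I would form the $\ell \times \ell$ matrix $C(J) = (c_{ij_i,s})_{i,s}$, whose $i$-th row is the coordinate vector of $u_{ij_i}$. Because $\dim_k V = \ell$, the vectors $u_{1j_1}, \ldots, u_{\ell j_\ell}$ span $V$ (equivalently, form a basis) if and only if $\det C(J) \ne 0$. It therefore suffices to exhibit a single $J$ for which $\det C(J) \ne 0$.

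The heart of the argument will be to sum $\det C(J)$ over all $\prod_i p_i$ possible selections. Using the Leibniz formula and interchanging the sum over $J$ with the sum over $\sigma \in S_\ell$, the relations established in the first step collapse the inner sums to Kronecker deltas, so that
\[
\sum_J \det C(J) = \sum_{\sigma \in S_\ell} \mathrm{sgn}(\sigma) \prod_{i=1}^{\ell} \left( \sum_{j_i=1}^{p_i} c_{ij_i, \sigma(i)} \right) = \sum_{\sigma \in S_\ell} \mathrm{sgn}(\sigma) \prod_{i=1}^{\ell} \delta_{i, \sigma(i)} = 1,
\]
with only $\sigma = \mathrm{id}$ contributing to the final sum. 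Since the total is $1 \ne 0$, at least one selection $J$ must satisfy $\det C(J) \ne 0$, and this $J$ gives the desired indices.

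There is no serious obstacle in this approach; the only delicate point is the bookkeeping for the interchange of the two summations, which must collapse cleanly to the indicator of $\sigma = \mathrm{id}$. Once the identities on the $c_{ij,s}$ are written down, the rest is a formal application of the Leibniz expansion and a pigeonhole-style conclusion on the nonzero sum.
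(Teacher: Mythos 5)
Your argument is correct and takes a genuinely different route from the paper's. The paper proves the lemma by induction on $\ell$: it passes to the quotient $\overline{V}=V/\left<v_\ell\right>_k$, applies the inductive hypothesis to select $j_1,\ldots,j_{\ell-1}$, and then observes that since $\overline{v_\ell}\neq 0$ modulo $\left<u_{1j_1},\ldots,u_{\ell-1,j_{\ell-1}}\right>_k$, some summand $u_{\ell j_\ell}$ of $v_\ell$ must survive in that quotient as well. Your proof instead expresses everything in coordinates relative to the given basis and proves the stronger algebraic identity
\[
\sum_{J}\det C(J)=\sum_{\sigma\in S_\ell}\operatorname{sgn}(\sigma)\prod_{i=1}^{\ell}\Bigl(\sum_{j_i=1}^{p_i}c_{ij_i,\sigma(i)}\Bigr)=\sum_{\sigma\in S_\ell}\operatorname{sgn}(\sigma)\prod_{i=1}^{\ell}\delta_{i,\sigma(i)}=1,
\]
where the multilinearity of the product in the separate indices $j_1,\ldots,j_\ell$ lets the inner sum over $J$ factor coordinatewise and collapse via $\sum_{j}c_{ij,s}=\delta_{is}$, leaving only the identity permutation. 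Since $1\neq 0$ in any field $k$, some $\det C(J)$ is nonzero, which is exactly the conclusion. The interchange of the finite sums over $J$ and over $\sigma$ is unproblematic. The paper's inductive argument is more elementary and avoids coordinates; your determinantal argument is non-inductive, quantitatively sharper (it shows the signed count of good selections, weighted by determinants, is exactly $1$), and arguably more transparent once the relation $\sum_j c_{ij,s}=\delta_{is}$ is written down. Both proofs are complete and self-contained.
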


\begin{proof}
	We prove by induction on $\ell = \dim_k V$. If $\ell = 1$, then it is obvious.
	Suppose that $\ell \ge 2$ and the assertion holds for $\ell-1$. 
	Put $\overline{V} = V/\left<v_\ell\right>_k = \left<\overline{v_1}, \ldots , \overline{v_{\ell -1}}\right>_k$. Then we have
	$$
	\overline{v_i} = \overline{u_{i1}} + \cdots + \overline{u_{ip_i}}
	$$
	for each $1 \le i \le \ell-1$ in $\overline{V}$. 
	By the induction hypothesis, there exist $j_1, j_2,\ldots , j_{\ell-1}$ such that
	$$\overline{V} = \left<\overline{u_{ij_i}} \mid 1 \le i \le \ell-1\right>_k.$$
	Since $\overline{v_\ell} \ne 0$ in $\overline{V}$ and $\dim_k \overline{V} = \ell-1$, we have $V=\left<u_{ij_i} \mid 1 \le i \le \ell-1\right>_k + \left<v_\ell\right>_k$. 
	Hence $\overline{v_\ell} \ne 0$ in $V' = V /\left<u_{ij_i} \mid 1 \le i \le \ell-1\right>_k$, hence there exists $j_{\ell}$ such that $\overline{u_{\ell j_\ell}} \ne 0$ in $V'$.
	Therefore $V=V'+\left<u_{\ell j_\ell}\right>_k = \left<u_{1j_1}, u_{2j_2},\ldots , u_{\ell j_\ell}\right>_k$ as desired.
\end{proof}

\begin{proof}[Proof of Proposition \ref{graded}]
Since $\dim_k  A_{\mathbf{a}} = \ell$, we may write $A_{\mathbf{a}} = \left<v_1, v_2,\ldots , v_\ell\right>_k$. 
Since $A_{\mathbf{a}} \subseteq \trace_A(X)$, we can find homogeneous elements $f_1, f_2,\ldots ,f_p \in \Hom_A(X,A)$ such that $v_1, v_2,\ldots , v_\ell \in \sum_{i=1}^p \image f_i$. We write
$$
\left\{
\begin{array}{c}
	v_1 = f_1(x_{11}) + f_2(x_{12}) + \cdots\quad + f_p(x_{1p})\\
	v_2 = f_1(x_{21}) + f_2(x_{22}) + \cdots\quad + f_p(x_{2p})\\
	\vdots\\
	v_\ell = f_1(x_{\ell 1}) + f_2(x_{\ell 2}) + \cdots\quad + f_p(x_{\ell p})\\
\end{array}
\right.
$$
for some homogeneous elements $x_{ij} \in X$. 
Then, for all $1 \le i \le \ell$ and $1 \le j \le p$, we may assume $f_j(x_{ij}) \in A_{\mathbf{a}}$ since $f_j$ and $x_{ij}$ are homogeneous.
Therefore, by Lemma \ref{grd_lem}, we can choose $1 \le j_1, j_2,\ldots , j_\ell \le p$ such that
$\{f_{j_1}(x_{1j_1}), f_{j_2}(x_{2j_2}), \ldots , f_{j_\ell}(x_{\ell j_\ell})\}
$ is a $k$-basis of $A_{\mathbf{a}}$. 
For $1 \le i \le \ell$, we put $g_i = f_{j_i}$ and $x_i = x_{i j_i}$. Namely, 
$$
A_{\mathbf{a}} = \left<g_1(x_1) , g_2(x_2),\ldots , g_\ell(x_\ell)\right>_k.
$$
Therefore, there exists an invertible matrix $C=(c_{ij}) \in \Mat_{\ell \times \ell}(k)$ such that
$$
\left\{
\begin{array}{c}
	g_1(x_1) = c_{11}v_1 + c_{12}v_2 + \cdots + c_{1\ell}v_\ell \\
	g_2(x_2) = c_{21}v_1 + c_{22}v_2 + \cdots + c_{2\ell}v_\ell \\
	\vdots\\
	g_\ell(x_\ell) = c_{\ell 1}v_1 + c_{\ell 2}v_2 + \cdots + c_{\ell \ell}v_\ell. \\
\end{array}
\right.
$$
On the other hand, let $\{\mathbf{e}_i\}_{1\le i \le r}$ be the standard basis of $F$ and put $y_i = \varepsilon(\mathbf{e}_i) \in X$. 
For all $1 \le i \le \ell$, we write $x_i = a_{i1}y_1 + a_{i2}y_2 + \cdots + a_{ir}y_r$ for some homogeneous elements $a_{ij} \in A$. We then have
$$
\left\{
\begin{array}{c}
	c_{11}v_1 + c_{12}v_2 + \cdots + c_{1\ell}v_\ell = a_{11}g_1(y_1) + a_{12}g_1(y_2) + \cdots + a_{1r}g_1(y_r)\\
	c_{21}v_1 + c_{22}v_2 + \cdots + c_{2\ell}v_\ell  = a_{21}g_2(y_1) + a_{22}g_2(y_2) + \cdots + a_{2r}g_2(y_r)\\
	\vdots\\
	c_{\ell 1}v_1 + c_{\ell 2}v_2 + \cdots + c_{\ell \ell}v_\ell = a_{\ell 1}g_\ell(y_1) + a_{\ell 2}g_\ell(y_2) + \cdots + a_{\ell r}g_\ell (y_r). \\
\end{array}
\right.
$$
Hence, there exists $(i_1,j_1), (i_2, j_2),\ldots ,(i_\ell, j_\ell)$ such that
$$
A_{\mathbf{a}} = \left< a_{i_\alpha j_\alpha} g_{i_\alpha}(y_{j_\alpha}) \mid 1 \le \alpha \le \ell \right>_k
$$
by Lemma \ref{grd_lem}. For $1 \le \alpha \le \ell$ and $1 \le \beta \le r$, we put
$$
u_{\alpha \beta} = a_{i_\alpha j_\beta}g_{i_\alpha}(y_\beta).
$$
Then $u_{\alpha j_\alpha} = a_{i_\alpha j_\alpha}g_{i_\alpha}(y_{j_\alpha})$ and $\mathbf{u} = \begin{pmatrix} u_{\alpha 1} & u_{\alpha 2} & \cdots & u_{\alpha r} \end{pmatrix}$ satisfy $\mathbf{u}{\cdot}M = 0$ in $\Mat_{1 \times q}(A)$, because $g_{i_\alpha} \circ \varepsilon = \widehat{a_{i_\alpha j_\alpha}} \circ \mathbf{u}$.
Thus, by putting $U=(u_{ij})$, the conditions (a), (b), and (c) are satisfied.
\end{proof}

\begin{Corollary}\label{graded dim1}
	Under the same notation as in Proposition \ref{graded}, let $\mathbf{a} \in \mathbb{Z}^n$ such that $\dim_k A_{\mathbf{a}} =1$ and $0 \ne u \in A_{\mathbf{a}}$. If $u \in \trace_A(X)$, then there exist $u_1, u_2,\ldots , u_r\in A$ such that $u \in \{u_1, u_2,\ldots , u_r\}$ and $\begin{pmatrix}u_1 & u_2 & \cdots & u_r\end{pmatrix}{\cdot}M = 0$ in $\Mat_{1 \times q}(A)$.
\end{Corollary}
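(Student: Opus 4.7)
The plan is to derive the corollary directly from Proposition \ref{graded} by specializing to the case $\ell=1$ and then correcting the output by a scalar, since Proposition \ref{graded} \emph{a priori} yields only some nonzero element of $A_{\mathbf{a}}$ rather than the prescribed generator $u$.

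First I would verify that the hypothesis of Proposition \ref{graded} is satisfied. Since $\dim_k A_{\mathbf{a}}=1$ and $u$ is a nonzero element of $A_{\mathbf{a}}$, one has $A_{\mathbf{a}}=\langle u\rangle_k$; combined with $u \in \trace_A(X)$ and the fact that $\trace_A(X)$ is an ideal of $A$ (in particular closed under multiplication by $k\subseteq A$), this gives $A_{\mathbf{a}}\subseteq \trace_A(X)$. Applying Proposition \ref{graded} with $\ell=1$ then produces a row vector $U=(u_{11},u_{12},\ldots,u_{1r}) \in \Mat_{1\times r}(A)$ together with an index $1\le j_1 \le r$ such that $u_{1j_1}\in A_{\mathbf{a}}$, $A_{\mathbf{a}}=\langle u_{1j_1}\rangle_k$, and $U\cdot M=0$ in $\Mat_{1\times q}(A)$.

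Finally, because $A_{\mathbf{a}}$ is a one-dimensional $k$-vector space in which both $u$ and $u_{1j_1}$ are nonzero, there exists a unit $c\in k^{\times}$ with $u_{1j_1}=cu$. Setting $u_i := c^{-1}u_{1i}$ for $1\le i \le r$ then yields $u_{j_1}=u$, so $u\in\{u_1,u_2,\ldots,u_r\}$, while the desired relation $\begin{pmatrix} u_1 & u_2 & \cdots & u_r\end{pmatrix}\cdot M = c^{-1}(U\cdot M)=0$ holds in $\Mat_{1\times q}(A)$. Since everything reduces to Proposition \ref{graded} plus this rescaling, there is no substantial obstacle; the only subtlety is that the rescaling is legitimate precisely because $c$ lies in the field $k$, so that $c^{-1} \in A$ exists.
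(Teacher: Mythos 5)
Your proof is correct and follows the route the paper clearly intends: the corollary is stated without proof as an immediate specialization of Proposition \ref{graded} to $\ell = 1$, and you carry that out faithfully, including the necessary normalization by a scalar $c\in k^{\times}$ so that $u$ itself, rather than merely a $k$-basis element of $A_{\mathbf{a}}$, appears among the $u_i$.
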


\begin{Corollary}\label{graded dim2}
		Under the same notation as in Proposition \ref{graded}, let $\mathbf{a} \in \mathbb{Z}^n$ such that $\dim_k A_{\mathbf{a}} =2$. Put $A_{\mathbf{a}} = \left<x,y\right>_k$. If $A_{\mathbf{a}} \subseteq \trace_A(X)$, then there exist $a,b,c,d \in k$ and $u_1, u_2,\ldots , u_r, v_1, v_2,\ldots v_r \in A$ such that \begin{enumerate}[{\rm (a)}]
 	\item $ad-bc \ne 0$,
 	\item $ax+by \in \{u_1, u_2,\ldots , u_r\}$,
 	\item $cx+dy \in \{v_1, v_2,\ldots , v_r\}$, and 
 	\item $\begin{pmatrix}u_1 & u_2 & \cdots & u_r\\v_1 & v_2 & \cdots & v_r\end{pmatrix}{\cdot}M = 0$ in $\Mat_{2 \times q}(A)$.
 \end{enumerate}

\end{Corollary}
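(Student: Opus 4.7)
The plan is to derive Corollary \ref{graded dim2} as a direct application of Proposition \ref{graded} in the special case $\ell = 2$. First I would invoke Proposition \ref{graded} with the given homogeneous component $A_{\mathbf{a}}$ of dimension $2$, which yields a matrix $U = (u_{ij}) \in \Mat_{2 \times r}(A)$ satisfying conditions (a), (b), (c) of that proposition. Concretely, there exist column indices $j_1, j_2$ such that $u_{1 j_1}, u_{2 j_2} \in A_{\mathbf{a}}$, the pair $\{u_{1 j_1}, u_{2 j_2}\}$ forms a $k$-basis of $A_{\mathbf{a}}$, and $U \cdot M = 0$ in $\Mat_{2 \times q}(A)$.

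Next I would use the hypothesis $A_{\mathbf{a}} = \langle x, y \rangle_k$ to write $u_{1 j_1} = ax + by$ and $u_{2 j_2} = cx + dy$ with $a,b,c,d \in k$. The change-of-basis matrix $\begin{pmatrix} a & b \\ c & d \end{pmatrix}$ must be invertible since $\{u_{1 j_1}, u_{2 j_2}\}$ and $\{x, y\}$ are both $k$-bases of the same two-dimensional space; this yields $ad - bc \ne 0$, giving condition (a) of the corollary.

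Finally I would simply name the rows: set $(u_1, u_2, \ldots, u_r)$ to be the first row of $U$ and $(v_1, v_2, \ldots, v_r)$ its second row. Then $ax + by = u_{1 j_1} \in \{u_1, \ldots, u_r\}$ and $cx + dy = u_{2 j_2} \in \{v_1, \ldots, v_r\}$, establishing (b) and (c), while the equation
$$
\begin{pmatrix} u_1 & u_2 & \cdots & u_r \\ v_1 & v_2 & \cdots & v_r \end{pmatrix} \cdot M = U \cdot M = 0
$$
is just a restatement of condition (c) from Proposition \ref{graded}, giving (d). There is no real obstacle here; the entire content of the corollary is packaged inside Proposition \ref{graded}, and the only step worth spelling out is the identification of the invertibility condition $ad - bc \ne 0$ with the statement that the two chosen homogeneous elements actually span $A_{\mathbf{a}}$.
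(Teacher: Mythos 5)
Your proof is correct and is exactly the argument the paper intends: the corollary is simply Proposition \ref{graded} with $\ell = 2$, with the rows of $U$ renamed $(u_i)$ and $(v_i)$ and the invertibility $ad - bc \ne 0$ read off from the fact that $\{u_{1j_1}, u_{2j_2}\}$ and $\{x, y\}$ are both $k$-bases of the two-dimensional space $A_{\mathbf{a}}$. The paper omits the proof precisely because it is this direct specialization.
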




\section{Nearly Gorenstein property of numerical semigroup rings \\with determinantal defining ideals} \label{1dim}

Let $k$ be a field and $n \ge 3$ an integer. Take $0 < a_1, a_2,\ldots , a_n \in \mathbb{N}$ such that the greatest common divisor of $a_1, a_2,\ldots , a_n$ is $1$, where $\mathbb{N}$ denotes the set of all non-negative integers.
We consider a numerical semigroup 
$$
H = \left< a_1, a_2,\ldots ,a_n\right> = \left\{\sum_{i=1}^n \lambda_i a_i \ \middle| \  \lambda_i \in \mathbb{N}\right\}.
$$
Throughout this article, we always assume that $a_1, a_2,\ldots ,a_n$ is the minimal system of generators of $H$. 
Then, the $\mathbb{Z}$-graded $k$-subalgebra $A=k[H] = k[t^h \mid h \in H]$ of the polynomial ring $k[t]$ with the standard grading, is called the {\it numerical semigroup ring} of $H$ over $k$. 
Let $B=k[X_1,X_2,\ldots ,X_n]$ be the polynomial ring over $k$ with $n$ indeterminates. We regard $B$ as a $\mathbb{Z}$-graded ring with $B_0 = k$ and $\deg X_i = a_i$.
The graded $k$-algebra homomorphism $\varphi : B \to A$ defined by $\varphi(X_i) = t^{a_i}$ for every $1 \le i \le n$ induces a $B$-module structure of any $A$-modules.
By applying Corollary \ref{graded dim1} to $A$, and further combining Corollary \ref{Comp_canontrace}, we readily get the following, since $\dim_k A_p \le 1$ for all $p \in \mathbb{Z}$.

\begin{Proposition}\label{3.1}
	Let $0 \to F_p \overset{{}^tM}{\to} F_{p-1} \to \cdots \to F_0 \overset{\varepsilon}{\to} A \to 0$ be the graded minimal $B$-free resolution of $A$ with $\rank_B F_p = r$ and put
	$$
	\Lambda = \{\mathbf{f} = \begin{pmatrix} f_1 & f_2 & \cdots & f_r \end{pmatrix} \in \Mat_{1 \times r} (B) \mid \mathbf{f}{\cdot}M = 0 \text{ in $\Mat_{1 \times r}(A)$}\}.
	$$
	Let $u\in H$. Then, $t^u \in \trace_A(\canon_A)$ if and only if there exists $\mathbf{f} = \begin{pmatrix}f_1 & f_2 & \cdots & f_r\end{pmatrix} \in \Lambda$ such that $t^u \in \{\varepsilon(f_1), \varepsilon(f_2),\ldots ,\varepsilon(f_r)\}$.
\end{Proposition}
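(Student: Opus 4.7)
The plan is to deduce Proposition \ref{3.1} directly by combining Corollary \ref{Comp_canontrace} with the graded refinement Corollary \ref{graded dim1}, applied to the present numerical-semigroup setting. Since $B$ is a graded polynomial ring (hence Gorenstein), dualizing the minimal $B$-free resolution of $A$ and then applying $A \otimes_B -$ produces a presentation
$$
A^{\oplus q} \overset{\overline{M}}{\longrightarrow} A^{\oplus r} \to \canon_A \to 0
$$
of the canonical module as an $A$-module, where $q = \rank_B F_{p-1}$ and $\overline{M}$ denotes the image of $M$ in $\Mat_{r \times q}(A)$. Corollary \ref{Comp_canontrace} then yields
$$
\trace_A(\canon_A) = \sum_{\mathbf{f} \in \Lambda} (\varepsilon(f_1), \ldots, \varepsilon(f_r))\,A,
$$
from which the $(\Leftarrow)$ direction of Proposition \ref{3.1} is immediate: if $\mathbf{f} = (f_1, \ldots, f_r) \in \Lambda$ and $t^u = \varepsilon(f_i)$ for some $i$, then $t^u$ lies in the right-hand side.

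For $(\Rightarrow)$, the key observation is that $\dim_k A_u \le 1$ for every $u \in \mathbb{Z}$, so $u \in H$ gives $A_u = k\cdot t^u$. Assuming $t^u \in \trace_A(\canon_A)$, I would apply Corollary \ref{graded dim1} to the $A$-presentation of $\canon_A$ above at the grade $u$. This produces homogeneous elements $u_1, \ldots, u_r \in A$ with $t^u \in \{u_1, \ldots, u_r\}$ and $(u_1, \ldots, u_r)\cdot \overline{M} = 0$ in $\Mat_{1 \times q}(A)$. Since $\varepsilon : B \to A$ is a graded surjection, each $u_i$ lifts to a homogeneous $f_i \in B$; the product $\mathbf{f}\cdot M$ then reduces modulo $I_H$ to $(u_1, \ldots, u_r)\cdot \overline{M} = 0$, so $\mathbf{f} \in \Lambda$, and the identity $\varepsilon(f_i) = u_i$ for every $i$ ensures that $t^u$ appears among the $\varepsilon(f_i)$, as required.

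Since the two supporting corollaries already carry the bulk of the argument, I do not expect a serious obstacle. The only minor subtlety worth flagging is that Corollary \ref{Comp_canontrace} is phrased over a local Gorenstein base, whereas the ring $B$ in this section is a graded polynomial ring; however, its proof uses only dualization of an $S$-free resolution together with right-exactness of $R \otimes_S -$, so it transfers verbatim to the graded setting employed here.
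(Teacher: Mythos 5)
Your proposal is correct and reproduces exactly the argument the paper intends: the authors state that Proposition \ref{3.1} follows "readily" by combining Corollary \ref{Comp_canontrace} (to compute $\trace_A(\canon_A)$ from the dual of the minimal resolution) with Corollary \ref{graded dim1} (the one-dimensional graded-component refinement), using $\dim_k A_u \le 1$; you spell out both implications in precisely this way, including the lifting of homogeneous $u_i \in A$ to $f_i \in B$ for the forward direction, and you correctly flag that Corollary \ref{Comp_canontrace} transfers verbatim to the graded setting.
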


Now, we consider
\[
R=\widehat{A_\fkM}=k[[t^h \mid h\in H]] \subseteq k[[t]] = V,
\] 
where $\fkM$ denotes the graded maximal ideal $(t^{a_1}, t^{a_2}, \dots, t^{a_n})$ of $A$, $\widehat{*}$ denotes the $\fkm = \fkM A_\fkM$-adic completion, and $k[[t]]$ denotes the formal power series ring over $k$. We put $S=k[[X_1, X_2,\ldots , X_n]]$ the formal power series ring with $n$ indeterminates.
$R$ is also called the {\it numerical semigroup ring} of $H$ over $k$ and denoted by $k[[H]]$.
Let $I_H$ be the kernel of the canonical surjective $k$-algebra homomorphism $S \to R$ obtained by $X_i\mapsto t^{a_i}$. Hence $R\cong S/I_H$. We denote by $x_i=t^{a_i}$ the image of $X_i$ in $R$.
Note that, for $u \in H$, $t^u \in \trace_R(\canon_R)$ if and only if $t^u \in \trace_A(\canon_A)$, where $\canon_A$ denotes the graded canonical module of $A$.

In what follows, we always assume that $R$ has a certain determinantal defining ideal; that is, $I_H$ has the following form:
$$I_H=\detid_2\begin{pmatrix}
		X_2^{m_2} & X_3^{m_3} & \cdots & X_n^{m_n} & X_1^{m_1}\\
		X_1^{\ell_1} & X_2^{\ell_2} & \cdots & X_{n-1}^{\ell_{n-1}} & X_n^{\ell_n}
		\end{pmatrix}$$
for some positive integers $m_1, m_2,\ldots, m_n, \ell_1,\ell_2,\ldots , \ell_n$. Here, $\detid_2(\mathcal{M})$ denotes the ideal of $S$ generated by $2$-minors of a matrix $\mathcal{M}$ with entries in $S$.
For an integer $r \in \mathbb{Z}$, $\nint{r}$ denotes the integer such that $1 \le \nint{r} \le n$ and $\nint{r} \equiv r \mod n$.
Then we note that $\{X_{\nint{i+1}}^{m_{\nint{i+1}}} X_j^{\ell_j} - X_{\nint{j+1}}^{m_{\nint{j+1}}} X_i^{\ell_i} \mid 1\le i< j \le n\}$ is a minimal system of generators of $I_H$.

Thanks to the work of Eagon-Northcott \cite{EN}, we can get a minimal $S$-free resolution of $R$ of the form $0 \to S^{\oplus (n-1)} \overset{{}^t\! M}{\longrightarrow}  S^{\oplus n(n-2)} \to \cdots \to S \to R \to 0$, where

	\[
	M =\begin{pmatrix}
	X_2^{m_2}  & \cdots & X_1^{m_1} & \\
	-X_1^{\ell_1} & \cdots & -X_n^{\ell_n} & X_2^{m_2}  & \cdots & X_1^{m_1} & \\
	 &  &  & -X_1^{\ell_1} & \cdots & -X_n^{\ell_n} & \\
	 & & & & & & \ddots \\
	 & & & & & & &  X_2^{m_2}  & \cdots & X_1^{m_1}\\
	 & & & & & & &  -X_1^{\ell_1}  & \cdots & -X_n^{\ell_n}\\
	\end{pmatrix},
	\]
	by swapping the signatures in some columns and all even-numbered rows. Let $N \in \Mat_{(n-1) \times n(n-2)}(R)$ such that each entry of $N$ is the image of the corresponding entry of $M$. We put $x_i$ the image of $X_i$ in $R$ for $1 \le i \le n$.
	We denote the $i$-th column of $N$ by $N_i$ and define
	$$
	\Lambda = \left\{\mathbf{f} = \begin{pmatrix}f_1 & f_2 & \cdots & f_{n-1} \end{pmatrix} \in \Mat_{1 \times (n-1)}(R) ~\middle|~\text{$\mathbf{f}{\cdot}N = 0$ in $\Mat_{1 \times (n(n-2))}(R)$}\right\}.
	$$
	Namely, by denoting $\left[N_{(j-1)n+i}\right]_\ell$ the $\ell$-th entry of the column vector $N_{(j-1)n+i}$, we have 
	$$
	\left[N_{(j-1)n+i}\right]_\ell = \begin{cases}
 		x_{\nint{i+1}}^{m_{\nint{i+1}}} & (\ell = j)\\
 		-x_{i}^{\ell_i} & (\ell = j+1)\\
 		0 & (\ell \ne j, j+1).
	\end{cases}
	$$
	Hence, 
\begin{align}\label{eq2}
	x_{\nint{i+1}}^{m_{\nint{i+1}}} f_j - x_i^{\ell_i} f_{j+1} = 0 \text{ in $R$}
\end{align}
for all $\mathbf{f} = \begin{pmatrix} f_1 & f_2 & \cdots & f_{n-1}\end{pmatrix} \in \Lambda$, $1 \le i \le n$, and $1 \le j \le n-2$.
From now on, for $x\in R$ and $\mathbf{f} = \begin{pmatrix} f_1 & f_2 & \cdots & f_{n-1}\end{pmatrix}\in \Lambda$, $x\in \mathbf{f}$ denotes $x=f_r$ for some $1\le r \le n-1$.
By the form of \eqref{eq2}, we get the following key lemma. 

\begin{Lemma}\label{key}
	Let $\mathbf{f} \in \Lambda$ and $1 \le i \le n$. We have the following.
	\begin{enumerate}[\rm(1)] 
		\item Suppose $m_i \ge 2$. If $x_i \in \mathbf{f}$, then $f_1 = x_i$.
		\item Suppose $\ell_i \ge 2$. If $x_i \in \mathbf{f}$, then $f_{n-1} = x_i$.
	\end{enumerate}
\end{Lemma}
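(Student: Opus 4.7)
The plan is to prove (1) by contradiction, with (2) following by an analogous argument that swaps the two rows of the defining matrix (and correspondingly swaps the roles of $f_1$ and $f_{n-1}$). Assume $f_r = x_i$ with $r \ge 2$ and $m_i \ge 2$; the goal is to derive a contradiction.

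First I would use the relation \eqref{eq2} to eliminate $x_i$. Choosing $k = \nint{i-1}$ so that $\nint{k+1} = i$, together with $j = r-1$, gives $x_i^{m_i}\, f_{r-1} = x_{\nint{i-1}}^{\ell_{\nint{i-1}}}\, x_i$ in $R$. Since $R \hookrightarrow k[[t]]$ is a domain, cancellation of $x_i$ yields $x_i^{m_i - 1}\, f_{r-1} = x_{\nint{i-1}}^{\ell_{\nint{i-1}}}$ in $R$. Because $R$ is $\mathbb{Z}$-graded with one-dimensional graded pieces and the right-hand side is a single monomial, expanding $f_{r-1} = \sum_{h \in H} c_h t^h$ and matching coefficients force $f_{r-1}$ to be a scalar multiple of the monomial $t^b$ with $b = \ell_{\nint{i-1}}\, a_{\nint{i-1}} - (m_i - 1) a_i$; hence $b \in H$ is necessary for $f_{r-1} \in R$.

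The main obstacle will be to show $b \notin H$ under the hypothesis $m_i \ge 2$. My approach is to invoke the 2-minor relation $m_{\nint{i+1}} a_{\nint{i+1}} + \ell_{\nint{i-1}}\, a_{\nint{i-1}} = (m_i + \ell_i) a_i$ coming from $(*)$ to rewrite $b = (\ell_i + 1) a_i - m_{\nint{i+1}} a_{\nint{i+1}}$. Then I would argue that $b \in H$ would furnish a binomial in $I_H$ incompatible with $I_H$ being generated by the 2-minors of the matrix in $(*)$, using the minimality of $a_1, a_2, \ldots, a_n$ as a system of generators of $H$; this yields the desired contradiction and forces $r = 1$.
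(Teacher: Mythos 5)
Your reduction is identical to the paper's up to the point where you obtain $x_i^{m_i-1} f_{r-1} = x_{\nint{i-1}}^{\ell_{\nint{i-1}}}$ and deduce that the putative degree $b$ of $f_{r-1}$ must lie in $H$; the rewriting of $b$ via the $2$-minor degree relation is also a correct (if optional) move. The genuine gap is in the last step, which you only sketch: you assert that $b\in H$ would ``furnish a binomial in $I_H$ incompatible with $I_H$ being generated by the 2-minors, using the minimality of $a_1,\dots,a_n$,'' but this is not a proof. Any binomial $X^\alpha - X^\beta$ with $\varphi_H(X^\alpha)=\varphi_H(X^\beta)$ lies in $I_H=\ker\varphi_H$, so producing a binomial in $I_H$ is never by itself an incompatibility, and minimality of the semigroup generators is not the lever that closes this argument.

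What actually closes it (and what the paper does) is a statement about minimal generation after reduction modulo a variable: from $x_i^{m_i-1}f_{r-1}=x_{\nint{i-1}}^{\ell_{\nint{i-1}}}$ and $m_i\ge 2$ one gets $X_{\nint{i-1}}^{\ell_{\nint{i-1}}}\in I_H+(X_i)$; on the other hand $X_{\nint{i-1}}^{m_{\nint{i-1}}+\ell_{\nint{i-1}}}-X_{\nint{i-2}}^{\ell_{\nint{i-2}}}X_i^{m_i}$ is a minimal generator of $I_H$, so (because $x_i$ is $R$-regular) its image $X_{\nint{i-1}}^{m_{\nint{i-1}}+\ell_{\nint{i-1}}}$ remains a minimal generator of $(I_H+(X_i))/(X_i)$. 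Since $m_{\nint{i-1}}\ge 1$, the factorization $X_{\nint{i-1}}^{m_{\nint{i-1}}+\ell_{\nint{i-1}}}=X_{\nint{i-1}}^{m_{\nint{i-1}}}\cdot X_{\nint{i-1}}^{\ell_{\nint{i-1}}}$ then contradicts minimality — this is precisely what yields ``$m_{\nint{i-1}}=0$'' as the absurdity. Your rewritten form $b=(\ell_i+1)a_i-m_{\nint{i+1}}a_{\nint{i+1}}$ can be turned into the analogous contradiction by reducing modulo $X_{\nint{i+1}}$ and using the minimal generator $X_i^{m_i+\ell_i}-X_{\nint{i+1}}^{m_{\nint{i+1}}}X_{\nint{i-1}}^{\ell_{\nint{i-1}}}$ together with $m_i\ge 2$, but you would still need to spell out the minimal-generator-after-reduction step; as written, the proposal stops short of a proof.
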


\begin{proof}
	Throughout this proof, we may assume $i=1$.
	
	(1) Suppose $f_p = x_1$ for some $2 \le p \le n-1$.
	Then we have 
	$$
		x_1^{m_1} f_{p-1} - x_n^{\ell_n} x_1 = \mathbf{f}{\cdot}N_{(p-1)n} =0.
	$$
	Hence, since $R$ is an integral domain, we can cancel $x_1$ from both sides to get 
	$$x_1^{m_1-1}f_{p-1} = x_n^{\ell_n}.$$
	Then our assumption $m_1 \ge 2$ implies $X_n^{\ell_n} \in I_H+(X_1)$. 
	On the other hand, $X_n^{m_n + \ell_n} - X_{n-1}^{\ell_{n-1}} X_1^{m_1}$ belongs to a minimal system of generators of $I_H$. Therefore, $X_n^{m_n + \ell_n} \in I_H+(X_1)$ must also belong to a minimal system of generators of $I_H+(X_1)$.
	This leads to a contradiction that $m_n =0$.
	Therefore, we must have $f_1 = x_1$.

	(2) We can prove the assertion in the same way as (1) by considering $\mathbf{f}{\cdot}N_{(p-2)n + 1}$. 
\end{proof}

Consequently, we have the following.

\begin{Corollary}\label{key_cor}
	If $x_i \in \trace_R(\canon_R)$, then $m_i=1$ or $\ell_i = 1$.
\end{Corollary}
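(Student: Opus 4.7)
The plan is to argue by contradiction, assuming both $m_i \geq 2$ and $\ell_i \geq 2$ hold simultaneously. Because the defining matrix has cyclic structure in the columns, a cyclic relabeling of $X_1, \ldots, X_n$ (which preserves the form of $I_H$ after renaming the exponents) reduces the problem to $i = 1$. First I would translate the hypothesis $x_1 \in \trace_R(\canon_R)$ into the existence of a concrete $\mathbf{f} = (f_1, \ldots, f_{n-1}) \in \Lambda$ having $x_1$ as an entry. Passing from $R$ to the graded ring $A = k[H]$ through the equivalence between $\trace_R(\canon_R)$ and $\trace_A(\canon_A)$ on monomial elements, and using $\dim_k A_{a_1} = 1$, this is exactly what Proposition \ref{3.1} (ultimately Corollary \ref{graded dim1}) delivers.

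Next, I would combine Lemma \ref{key}(1) with the very first column of $N$. Since $m_1 \geq 2$, Lemma \ref{key}(1) forces $f_1 = x_1$. The relation encoded in column $(j,i) = (1,1)$ is $x_2^{m_2} f_1 - x_1^{\ell_1} f_2 = 0$; substituting $f_1 = x_1$ and cancelling one factor of $x_1$ (legitimate in the domain $R$) gives $x_2^{m_2} = x_1^{\ell_1 - 1} f_2$. Because $\ell_1 \geq 2$, this places $x_2^{m_2}$ inside $(x_1) R$; lifting to $S$ we obtain $X_2^{m_2} \in I_H + (X_1) S$.

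The final step is to rule out this containment. My plan is to compose with the surjection that also kills $X_3, X_4, \ldots, X_n$, producing $\pi: S \twoheadrightarrow k[[X_2]]$ under which $(X_1) S$ dies. A direct inspection of the $2$-minor generators of $I_H$ shows that for each minor $X_{[i+1]}^{m_{[i+1]}} X_j^{\ell_j} - X_{[j+1]}^{m_{[j+1]}} X_i^{\ell_i}$, the first monomial survives $\pi$ only when $[i+1] = j = 2$ (so $i = 1,\, j = 2$), while the second requires $[j+1] = i = 2$, which is excluded by $i < j$. Thus only the $(1,2)$-minor $X_2^{m_2+\ell_2} - X_3^{m_3} X_1^{\ell_1}$ contributes, with image $X_2^{m_2+\ell_2}$. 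Hence $\pi(I_H + (X_1) S) = (X_2^{m_2+\ell_2}) \cdot k[[X_2]]$, and $X_2^{m_2}$ cannot lie in this ideal because $\ell_2 \geq 1$. The resulting contradiction forces $m_1 = 1$ or $\ell_1 = 1$.

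The main obstacle is this last reduction. It is essentially the same minimal-generator obstruction that drives the proof of Lemma \ref{key}(1), now transposed to the index pair $(1, 2)$ rather than $(n, 1)$: an a priori membership $X_2^{m_2} \in I_H + (X_1) S$ would force the minimal generator $X_2^{m_2+\ell_2} - X_3^{m_3} X_1^{\ell_1}$ of $I_H$ to become redundant modulo $(X_1)$ after multiplication by $X_2^{\ell_2}$, which the mod-$(X_1, X_3, \ldots, X_n)$ computation makes completely transparent.
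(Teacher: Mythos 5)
Your proof is correct, but it takes a genuinely different route from the paper's. After Lemma \ref{key}(1) gives $f_1 = x_1$, the paper does not look at the first column relation at all; instead it invokes Lemma \ref{key}(2) in parallel, noting that $\ell_i \ge 2$ forces $f_{n-1} = x_i$ as well, so $f_1 = f_{n-1} = x_i$. That equality is then killed off by Remark \ref{new2.25}: since $R$ is a domain and $x_i \ne 0$, the ideal $(f_1, \ldots, f_{n-1})R$ must be isomorphic to $\canon_R$, which has exactly $n-1$ minimal generators, whereas $f_1 = f_{n-1}$ shows the ideal is generated by at most $n-2$ elements. Your route uses only Lemma \ref{key}(1), then passes to the first column relation $x_2^{m_2} f_1 = x_1^{\ell_1} f_2$, cancels $x_1$ to land $X_2^{m_2}$ in $I_H + (X_1)S$, and rules this out by the reduction modulo $(X_1, X_3, \ldots, X_n)$, where $\pi(I_H)$ is generated in degree $m_2 + \ell_2 > m_2$. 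As you remark, this is the mirror image of the internal computation already carried out in the proof of Lemma \ref{key}(1), now centered at the $(1,2)$-minor rather than the $(n,1)$-minor. What the paper's approach buys is economy: both halves of Lemma \ref{key} are already proved, and Remark \ref{new2.25} converts the coincidence $f_1 = f_{n-1}$ into a contradiction without touching the minors again. What your approach buys is independence from Remark \ref{new2.25} and from Lemma \ref{key}(2), at the cost of an extra minimal-generator analysis that essentially repeats an argument the paper has already packaged once.
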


\begin{proof}
	Suppose $x_i \in \trace_R(\canon_R)$. 
	Then, thanks to Proposition \ref{3.1}, there exists  $\mathbf{f} \in \Lambda$ such that $x_i \in \mathbf{f}$.
	If $m_i, \ell_i \ge 2$, then $f_1 = x_i = f_{n-1}$ by Lemma \ref{key} which is a contradiction by Remark \ref{new2.25}. Therefore, we yield $m_i = 1$ or $\ell_i = 1$.
\end{proof}

The following is the main result of this section.

\begin{Theorem}\label{main}
	$R=k[[H]]$ is a nearly Gorenstein ring if and only if, after suitable permutation of the minimal system $a_1, a_2,\ldots , a_n$ of generators $H$, one of the following is satisfied.
\begin{enumerate}[\rm(1)] 
		\item $m_1 = m_2 = \cdots = m_n = 1$.
		\item $m_2 = m_3 = \cdots = m_n = \ell_1 = \ell_2 = \cdots = \ell_{n-2}=1$.
	\end{enumerate}
\end{Theorem}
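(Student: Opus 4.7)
The plan is to prove each direction separately.

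\textbf{Sufficiency.} In case (1), the condition $m_i = 1$ for all $i$ characterises $k[[H]]$ as almost Gorenstein by \cite[Theorem 7.8]{GTT}, which in dimension one implies nearly Gorenstein by \cite[Proposition 6.1]{HHS}. For case (2), I would exhibit two explicit elements of $\Lambda$:
\[
\mathbf{f} = \begin{pmatrix} x_1 & x_2 & \cdots & x_{n-1} \end{pmatrix},
\quad
\mathbf{g} = \begin{pmatrix} x_2 x_{n-1}^{\ell_{n-1}-1} & \cdots & x_{n-2} x_{n-1}^{\ell_{n-1}-1} & x_{n-1}^{\ell_{n-1}} & x_n \end{pmatrix}.
\]
Every $x_i$ appears among the entries of $\mathbf{f}$ or $\mathbf{g}$, so Proposition \ref{3.1} gives $\fkm \subseteq \trace_R(\canon_R)$. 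Verifying $\mathbf{f}{\cdot}N = 0$ and $\mathbf{g}{\cdot}N = 0$ reduces to the monomial identities $x_{i+1} x_j = x_i x_{j+1}$ (from the minors of consecutive columns in $\{1,\ldots,n-2\}$), together with $x_n x_j = x_{n-1}^{\ell_{n-1}} x_{j+1}$, $x_2 x_n^{\ell_n} = x_1^{m_1+1}$, and $x_n^{\ell_n+1} = x_1^{m_1} x_{n-1}^{\ell_{n-1}}$ for the boundary cases, each a direct consequence of the defining relations.

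\textbf{Necessity.} Suppose $R$ is nearly Gorenstein. By Corollary \ref{key_cor}, the sets $I_m = \{i : m_i \ge 2\}$ and $I_\ell = \{i : \ell_i \ge 2\}$ are disjoint. If $I_m = \emptyset$ we are in case (1); if $I_\ell = \emptyset$ but $I_m \ne \emptyset$, the permutation $a_i \mapsto a_{n+1-i}$ combined with swapping the two rows and reversing the columns preserves the form $(*)$ while swapping $m$ with $\ell$, so the relabelled system has all $m'_i = 1$ and we are again in case (1). Thus I may assume both $I_m$ and $I_\ell$ are nonempty.

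The key structural observation is that the relations \eqref{eq2} force every nonzero $\mathbf{f} \in \Lambda$ to be a geometric progression of monomials: setting $\rho = x_{\nint{k+1}}^{m_{\nint{k+1}}}/x_k^{\ell_k}$, a well-defined element of the fraction field of $R$ independent of $k$, one has $f_{j+1}/f_j = \rho$ for all $j$. In the graded setting each $f_j$ is then a monomial of degree $d + (j-1)c$ with $c = \deg \rho$, and $\mathbf{f} \in \Lambda$ is equivalent to $d, d+c, \ldots, d+(n-2)c$ all lying in $H$. By Lemma \ref{key}, each $x_i$ with $m_i \ge 2$ (resp.\ $\ell_i \ge 2$) can occur only as $f_1$ (resp.\ $f_{n-1}$), so the syzygy witnessing $x_i \in \trace_R(\canon_R)$ forces the forward (resp.\ backward) length-$(n-1)$ arithmetic progression starting (resp.\ ending) at $a_i$ to lie in $H$. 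After a cyclic shift I place one $m_i \ge 2$ at position $1$; then $\ell_1 = 1$, and any additional $m_i \ge 2$ with $i \ge 2$ or any $\ell_j \ge 2$ with $2 \le j \le n-2$ produces a second progression which, combined with the $a_1$-progression via the cyclic defining identity $(m_{i+1} + \ell_{i+1}) a_{i+1} = m_{i+2} a_{i+2} + \ell_i a_i$, yields an $\mathbb{N}$-linear dependence among $a_1,\ldots,a_n$ contradicting the minimality of the generating system. When a cyclic shift alone cannot concentrate the excess $m$'s at position $1$ (i.e.\ $|I_m| \ge 2$ but $|I_\ell| \le 1$), the reversal of the previous paragraph is first applied to exchange $I_m$ and $I_\ell$; after this combined permutation the exponents fall into the pattern of case (2).

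The main obstacle is the final elimination step above: each hypothetical extra non-$1$ exponent must be ruled out by assembling the right combination of a forward progression (from an $m$-witness), a backward progression (from an $\ell$-witness), and the cyclic defining identities of $H$ so that the resulting equation collapses one of the chosen generators $a_k$. Picking the right elimination order and invoking the correct identity at each step is where the delicate combinatorial bookkeeping lies.
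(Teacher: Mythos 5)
Your sufficiency argument is essentially the paper's: both case (1) via almost Gorenstein and case (2) via the same two explicit vectors $\mathbf{f}, \mathbf{g} \in \Lambda$; the boundary monomial identities you list are correct consequences of the $2$-minors.

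The necessity argument, however, contains a genuine gap that you yourself flag as the ``main obstacle.'' You set up a framework in which each syzygy $\mathbf{f}\in\Lambda$ has entries whose degrees form an arithmetic progression in $H$, and you claim that combining a forward progression (from an $m$-witness) with a backward one (from an $\ell$-witness) and the degree identities of the $2$-minors produces an $\mathbb{N}$-linear dependence among $a_1,\ldots,a_n$ contradicting minimality of the generating system. But you never exhibit such a dependence; the sentence ``yields an $\mathbb{N}$-linear dependence\ldots'' is an assertion, not an argument, and minimality of $a_1,\ldots,a_n$ only excludes dependences where one $a_i$ is an $\mathbb{N}$-combination of the rest --- it is unclear that the progressions you assemble force that precise shape. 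The paper avoids this entirely: after normalizing so that $m_1\ge 2$ (hence $\ell_1=1$ by Corollary \ref{key_cor}) and letting $p$ be minimal with $\ell_p\ge 2$, it inducts along the entries of a syzygy $\mathbf f$ with $f_{n-1}=x_p$ (the Claim in the proof of Theorem \ref{main}) to pin down $f_k$ and force $m_i=1$, then uses a second syzygy $\mathbf g$ with $g_1=x_1$ and the equation $x_p^2-g_{p+1}x_{p-1}=0$ to rule out $p\le n-2$, leaving only $p=n-1$ (or $p=n$), which is exactly pattern (2). That direct manipulation of the equations \eqref{eq2} is what you would need to replace your hand-wave about ``delicate combinatorial bookkeeping''; as written your necessity proof is incomplete.
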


\begin{proof}
	First, we prove the ``if'' part. In the case (1), $R$ is an almost Gorenstein ring (e.g., see \cite[Theorem 7.8]{GTT}). Moreover, thanks to \cite[Proposition 6.1]{HHS}, every $1$-dimensional almost Gorenstein ring is nearly Gorenstein. Therefore we have done. Suppose the case (2). 
	Then, by direct computation, 
	$$
	\begin{pmatrix}
		x_1 & x_2 & \cdots & x_{n-1}
	\end{pmatrix}{\cdot}N = 0 \text{ and }
	$$
	$$
	\begin{pmatrix}
		x_2x_{n-1}^{\ell_{n-1}-1} & x_3 x_{n-1}^{\ell_{n-1}-1} & \cdots & x_{n-1}^{\ell_{n-1}} & x_n
	\end{pmatrix}{\cdot}N = 0
	$$
	in $\Mat_{1 \times (n(n-2))}(R)$. 
	This means 
	$$	\begin{pmatrix}
		x_1 & x_2 & \cdots & x_{n-1}
	\end{pmatrix}, \begin{pmatrix}
		x_2x_{n-1}^{\ell_{n-1}-1} & x_3 x_{n-1}^{\ell_{n-1}-1} & \cdots & x_{n-1}^{\ell_{n-1}} & x_n
	\end{pmatrix} \in \Lambda.$$ 
	Therefore, by Corollary \ref{Comp_canontrace}, $(x_1, \ldots , x_n) \subseteq \trace_R(\canon_R)$. Thus, $R$ is a nearly Gorenstein ring.
	
	To prove the  ``only if'' part, we assume there exists a numerical semigroup $H=\left<a_1, a_2,\ldots , a_n\right>$ with the defining ideal $I_H=\detid_2\begin{pmatrix}
		X_2^{m_2} & X_3^{m_3} & \cdots & X_n^{m_n} & X_1^{m_1}\\
		X_1^{\ell_1} & X_2^{\ell_2} & \cdots & X_{n-1}^{\ell_{n-1}} & X_n^{\ell_n}
	\end{pmatrix}$ such that $R=k[[H]]$ is nearly Gorenstein, but there is no rearrangement of $a_1, a_2,\ldots ,a_n$ satisfying (1) or (2). 
	Since $k[[H]]$ is an integral domain, we may assume that $m_1 \ge 2$ after some permutation of $a_1, a_2,\ldots , a_n$. 
	Since $R$ is nearly Gorenstein, $x_1 \in \trace_R(\canon_R)$, whence $\ell_1 = 1$ by Corollary \ref{key_cor}.
	As $R$ does not satisfy (1) for any permutation of $a_1, a_2, \dots, a_n$, there exists $2 \le p \le n$ such that $\ell_p \ge 2$. 
	We take such $p$ as small as possible. 
	Then $\ell_1 = \ell_2 = \cdots = \ell_{p-1} =1$ and $\ell_p \ge 2$. 
	On the other hand, $x_p \in \trace_R(\canon_R)$ since $R$ is nearly Gorenstein. 
	Hence, by Proposition \ref{3.1} and Lemma \ref{key} (2), there exists $f_1, f_2, \ldots , f_{n-2} \in R$ such that
	$\begin{pmatrix}f_1 & \cdots & f_{n-2} & x_p\end{pmatrix}{\cdot}N = 0$ in $\Mat_{1\times (n(n-2))} (R)$. We put $f_{n-1} = x_p$.
	\begin{claim}
	 We have the following.
	\begin{enumerate}[{\rm (i)}]
		\item If $2 \le p \le n-1$, then $m_i=1$ and $f_{n-1+i-p} = x_i$ for all $2 \le i \le p$.
		\item If $p = n$, then $m_i = 1$ and $f_{i-1}=x_i$ for all $3 \le i \le n$.
	\end{enumerate}
	\end{claim}
	\begin{proof}[Proof of Claim 1]
		(i) We prove this by descending induction on $i$. By Corollary \ref{key_cor}, we have $m_p = 1$. Suppose $2 \le i < p$, $m_{i+1} = 1$, and $f_{n+i-p} = x_{i+1}$.
		We then get
		$$
		f_{n-1+i-p}x_{i+1} - x_{i+1}x_i = 0
		$$
		because $\ell_i = 1$ and $m_{i+1} = 1$. 
		Since $R$ is an integral domain, $f_{n-1+i-p} - x_i =0$ which implies $f_{n-1+i-p} = x_i$.
		Moreover, by considering $\begin{pmatrix}f_1 & \cdots & f_{n-1}\end{pmatrix}{\cdot} N_{n(n-4) +i}$, we have
		$f_{n-2+i-p}x_{i+1} - x_i^2 =0$. Therefore $m_i = 1$, because $X_i^{m_i+1} - X_{i-1}X_{i+1}$ is a part of a minimal system of generators of $I_H$.
		
		(ii) follows by the same argument as (i).
	\end{proof}
 
 	Hence we have $p \le n-1$, since if $p=n$, then (2) is satisfied by Claim 1 (ii).

 	Notice that, by Proposition \ref{3.1} and Lemma \ref{key} (2), we can find $g_2, g_3,\ldots , g_{n-1} \in R$ such that 
 	$
 	\begin{pmatrix} x_1 & g_2 & \cdots & g_{n-1}\end{pmatrix} \in \Lambda
 	$.
 	Furthermore, we get 
 	$$
 	g_2 = x_2, g_3 = x_3, \ldots , g_p = x_p
 	$$
 	by considering $\begin{pmatrix} x_1 & g_2 & \cdots & g_{n-1}\end{pmatrix}{\cdot}N_{(i-1)n + i}$ for $1 \le i \le p-1$, recursively.
 	Suppose $p \le n-2$. Then 
 	$$x_p^2 - g_{p+1} x_{p-1} = \begin{pmatrix} x_1 & g_2 & \cdots & g_{n-1}\end{pmatrix}{\cdot}N_{(p-1)n + p} = 0$$
 	whence $\ell_p=1$, which is a contradiction. 
 	Therefore $p = n-1$.
 	In this case, we have
 	$$
 	I_H=\detid_2\begin{pmatrix}
	X_2 & \cdots & X_{n-1} & X_n^{m_n} & X_1^{m_1}\\
	X_1 & \cdots & X_{n-2} & X_{n-1}^{\ell_{n-1}} & X_n^{\ell_n}
	\end{pmatrix}
 	$$
 	and $m_n = 1$ or $\ell_n = 1$ by Corollary \ref{key_cor}. In each case, we can rearrange $a_1,a_2,\ldots , a_n$ satisfying condition (2). But this is absurd.
 	This completes the proof of Theorem \ref{main}.
\end{proof}

\begin{Remark}
	The case (1) in Theorem \ref{main}, it is know by \cite[Theorem 7.8]{GTT} that $k[[H]]$ is an almost Gorenstein local ring in the sense of Goto-Takahashi-Taniguchi \cite{GTT}.
	This is equivalent to the condition that the numerical semigroup $H$ is almost symmetric.
	Under our setting, case (2) in Theorem \ref{main} corresponds to nearly Gorenstein rings that are not almost Gorenstien.
	Theorem \ref{main} seems to suggest that such nearly Gorenstein rings are rare as there are only 3 parameters $m_1, \ell_{n-1},\ell_n$ that can be freely chosen.
	We recall that every almost Gorenstein local ring of dimension $1$ is nearly Gorenstein as established by \cite[Proposition 6.1]{HHS}. 
\end{Remark}

\begin{Corollary}
	Suppose that $R=k[[H]]$ is a nearly Gorenstein local ring that is not almost Gorenstein, then $a_1, a_2,\ldots , a_{n-1}$ forms an arithmetic progression, after suitable permutation of the minimal system $a_1, a_2,\ldots ,a_n$ of generators of $H$.
\end{Corollary}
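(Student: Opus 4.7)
The plan is to invoke Theorem \ref{main} and simply read off an arithmetic progression from the shape of the defining matrix. Indeed, the nearly Gorenstein but not almost Gorenstein hypothesis, combined with the fact that case (1) of Theorem \ref{main} coincides with the almost Gorenstein case (as noted in the preceding Remark), forces case (2) to hold after a suitable permutation of $a_1,a_2,\ldots,a_n$. So we may assume
$$
I_H = \detid_2 \begin{pmatrix} X_2 & X_3 & \cdots & X_{n-1} & X_n & X_1^{m_1} \\ X_1 & X_2 & \cdots & X_{n-2} & X_{n-1}^{\ell_{n-1}} & X_n^{\ell_n} \end{pmatrix}.
$$

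The next step is to extract relations in $H$ from selected $2 \times 2$ minors of this matrix. Specifically, for each $1 \le i \le n-3$, the minor corresponding to columns $i$ and $i+1$ is
$$
X_{i+1}\cdot X_{i+1} - X_{i+2} \cdot X_i \;\in\; I_H,
$$
which, since $I_H = \ker(\varphi_H)$, yields the equality $2 a_{i+1} = a_i + a_{i+2}$ in $H$, equivalently $a_{i+2} - a_{i+1} = a_{i+1} - a_i$.

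Running $i$ from $1$ to $n-3$ gives
$$
a_2 - a_1 = a_3 - a_2 = \cdots = a_{n-1} - a_{n-2},
$$
and thus $a_1, a_2, \ldots, a_{n-1}$ is an arithmetic progression, which is exactly the conclusion.

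There is no real obstacle here since everything is routine once Theorem \ref{main} is in place; the only point requiring a sentence of care is the observation that the ``top row times bottom row minus \ldots'' minors live in the kernel of $\varphi_H$ and hence translate directly to additive relations among the $a_r$, which in turn force equal consecutive differences among $a_1,\ldots,a_{n-1}$ (the parameters $m_1$, $\ell_{n-1}$, $\ell_n$ pertain only to columns involving index $n-1$ or $n$ with $X_1^{m_1}$ on top, and play no role in these chosen minors).
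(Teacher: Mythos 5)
Your proof is correct and uses essentially the same underlying observation as the paper: both extract the equal-difference relations $a_{i+2} - a_{i+1} = a_{i+1} - a_i$ from the adjacent-column $2\times 2$ minors of the matrix in case (2) of Theorem \ref{main}. The paper phrases this via a graded-ideal ``constant degree difference per column'' lemma, while you spell out the explicit binomials $X_{i+1}^2 - X_i X_{i+2}$ in $\ker \varphi_H$, which is just a concrete rendering of the same degree-balance argument.
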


\begin{proof}
	Thanks to Theorem \ref{main}, we know that the defining ideal of $R$ is given by
	$$
	I_H=\detid_2 \begin{pmatrix}
 		X_2 & \cdots & X_i & \cdots & X_n & X_1^{m_1}\\
 		X_1 & \cdots & X_{i-1} & \cdots & X_{n-1}^{\ell_{n-1}} & X_n^{\ell_n}
 	\end{pmatrix},
	$$
	after suitable permutation of $a_1, a_2,\ldots ,a_n$. 
	Notice that the defining ideal of $A=k[H]$ also has the same form, thus being a graded ideal. 
	This observation implies that $a_i - a_{i-1}$ is constant for all $2 \le i \le n-1$ since the difference between the degrees of entries in the first row and the second row is constant for each column (see, e.g., \cite[Lemma 1]{KM}).
	Therefore, $a_1, a_2,\ldots ,a_{n-1}$ forms an arithmetic progression.
\end{proof}

\begin{Remark}
	For given positive integers, $m_1, m_2, \ldots , m_n, \ell_1, \ell_2, \ldots , \ell_n$, we can regard the ring
	$$
	A=k[X_1, X_2,\ldots , X_n] /\detid_2 \begin{pmatrix} X_2^{m_2} & X_3^{m_3} & \cdots & X_n^{m_n} & X_1^{m_1}\\
 	X_1^{\ell_1} & X_2^{\ell_2} & \cdots & X_{n-1}^{\ell_{n-1}} &  X_n^{\ell_n}
 	\end{pmatrix}
	$$
	as a graded ring by putting
	$$
	\deg X_i = \sum_{j=1}^n (m_{\nint{i+1}} m_{\nint{i+2}} \cdots m_{\nint{i+j-1}}) (\ell_{\nint{i+j}} \ell_{\nint{i+j+1}} \cdots \ell_{\nint{i+n-1}}).
	$$
	Here, $\ell_{\alpha}\ell_{\alpha+1} \cdots \ell_{\beta} = m_{\alpha}m_{\alpha+1} \cdots m_{\beta} =1$, if $\alpha > \beta$.
	In general, $\dim_k A_n$ can be larger than $1$ for $1 \le n \in \mathbb{N}$. 
	Even for such cases, similar results as Theorem \ref{main} should be achievable. 
	However, addressing these cases tends to complicate the description, making the discussion in the following section more intricate.
	This is why we have chosen to focus on numerical semigroup rings throughout this section.
\end{Remark}

\begin{Example}
	Let $m \ge 3$ be a positive integer with $m \not\equiv 2 \mod 7$. Put $H=\left<7, m+5, 2m+3, 3m+1\right>$. Then $k[[H]]$ is a nearly Gorenstein local ring but not almost Gorenstein.
	In fact, we have 
	$$
	k[[H]] \cong k[[X_1,X_2,X_3, X_4]] / \detid_2
	\begin{pmatrix}
		X_2 & X_3 & X_4 & X_1^m\\
		X_1 & X_2 & X_3 & X_4^2
	\end{pmatrix}.
	$$
\end{Example}

\section{Higher-dimensional nearly Gorenstein local rings}

We use the same notation as in Section \ref{1dim} and assume the defining ideal of $R=k[[H]]$ has the form
$$
\detid_2 \begin{pmatrix} X_2^{m_2} & X_3^{m_3} & \cdots & X_n^{m_n} & X_1^{m_1}\\
 	X_1^{\ell_1} & X_2^{\ell_2} & \cdots & X_{n-1}^{\ell_{n-1}} & X_n^{\ell_n}
 	\end{pmatrix}
$$
for some positive integers $m_1, m_2,\ldots , m_n, \ell_1, \ell_2, \ldots , \ell_n$.
For non-negative integers $p,q$ and
$$
1 \le i_1 < i_2 < \cdots < i_p \le n, ~~ 1 \le j_1 < j_2 < \cdots < j_q \le n,
$$
we put $I=\{i_1,i_2,\ldots , i_p\}$, $J=\{j_1, j_2,\ldots , j_q\}$, and 
$$
S^{I}_{J} = k[[X_1, X_2,\ldots ,X_n, Y_{i_1},Y_{i_2},\ldots , Y_{i_p}, Z_{j_1}, Z_{j_2}, \ldots , Z_{j_q}]]
$$
the formal power series ring with $n+p+q$ indeterminates over the field $k$.
Moreover, we consider 
$$D^I_J =\begin{pmatrix} V_2 & \cdots &  V_n & V_1\\
 	U_1 & \cdots & U_{n-1} & U_{n}
 	\end{pmatrix}
 	\in \Mat_{2 \times n}(S^I_J), \text{ where}$$ 
$$
V_{r} = \begin{cases}
	X_{r}^{m_{r}} + Y_{r} & (r \in I)\\
	X_{r}^{m_{r}} & (\text{otherwise})\\
\end{cases}
\quad \quad \text{and} \quad \quad 
U_{r} = \begin{cases}
	X_{r}^{\ell_{r}} + Z_{r} & (r \in J)\\
	X_{r}^{\ell_{r}} & (\text{otherwise})
\end{cases}
$$
for $1 \le r \le n$.
With the above notation, we consider the ring
$$
R^I_J = S^I_J / \detid_2\left(D^I_J\right).
$$


\begin{Remark}\label{new4.0}
\begin{enumerate}[\rm(1)] 
\item $R^I_J$ is a Cohen-Macaulay local domain of dimension $p+q+1$. Moreover, the sequence $x_r, y_{i_1}, y_{i_2}, \ldots, y_{i_p}, z_{j_1}, z_{j_2},\ldots , z_{j_q}$ forms an $R^I_J$-regular sequence for all $1 \le r \le n$.
\item Let $1 \le i \le n$. If there exists a homogeneous element $\varphi\in R^I_J$ such that $u_i, v_{\nint{i+1}} \in (\varphi)$, then $(\varphi)=R^I_J$.
\end{enumerate}
\end{Remark}

\begin{proof}
(1) By construction of $\detid_2\left(D^I_J\right)$, $\dim R^I_J \le n+p+q-(n-1)=p+q+1$. On the other hand, it is straightforward to check $\{x_r, y_{i_1}, y_{i_2}, \ldots, y_{i_p}, z_{j_1}, z_{j_2},\ldots , z_{j_q}\}$ is a system of parameter of $R^I_J$. It follows that the height of $\detid_2\left(D^I_J\right)$ is $n-1$. Hence, by \cite{EN}, $R^I_J$ is a Cohen-Macaulay local ring of dimension $p+q+1$. Since $R^I_J/(y_{i_1}, y_{i_2}, \ldots, y_{i_p}, z_{j_1}, z_{j_2},\ldots , z_{j_q})$ is a numerical semigroup ring, this residue ring is a domain. This implies that $R^I_J$ is also a domain by the following claim. 
\begin{claim}{\rm (\cite[page 62, Lemma]{Ge})}\label{claim2}
Let $(R, \fkm)$ be a Noetherian local ring. If $x\in \fkm$ is a non-zerodivisor of $R$ and $R/(x)$ is a domain, then $R$ is a domain.
\end{claim}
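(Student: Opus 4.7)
The plan is to argue by contradiction. Suppose $a, b \in R$ satisfy $ab = 0$ with $a \ne 0$ and $b \ne 0$; the goal is to derive an impossibility. The two ingredients available are that $x$ is a non-zerodivisor (so powers of $x$ may be cancelled) and that $R/(x)$ is a domain (so residues lying outside $(x)$ multiply to something nonzero). To make them interact I would invoke Krull's intersection theorem for the Noetherian local ring $R$, which gives $\bigcap_{n \ge 0} \fkm^n = 0$, and therefore $\bigcap_{n \ge 0} (x^n) = 0$ because $x \in \fkm$.

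The main steps are then as follows. Since $a \ne 0$ and $\bigcap_n (x^n) = 0$, there is a largest integer $s \ge 0$ with $a \in (x^s)$; analogously there is a largest $t \ge 0$ with $b \in (x^t)$. Writing $a = x^s a'$ and $b = x^t b'$, maximality of $s$ and $t$ forces $a', b' \notin (x)$. Substituting into $ab = 0$ yields $x^{s+t} a' b' = 0$, and since $x$ is a non-zerodivisor in $R$ so is every power $x^{s+t}$; cancelling gives $a' b' = 0$. Passing to $R/(x)$, the image $\overline{a'}\,\overline{b'} = 0$ has both factors nonzero by the choice of $s$ and $t$, contradicting the hypothesis that $R/(x)$ is a domain.

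The argument is essentially routine, so I do not expect a genuine obstacle; the one point worth flagging is the essential use of $x \in \fkm$. This hypothesis is what enables Krull's intersection theorem to collapse the $x$-adic filtration to zero and thus makes the selection of maximal exponents $s$ and $t$ meaningful. Without a local (or similarly Hausdorff) setup, the step extracting the largest $s$ with $a \in (x^s)$ could fail, and the claim itself becomes false in general.
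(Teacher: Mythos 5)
Your proof is correct, and it takes a genuinely different route from the paper's. The paper argues structurally: since $R/(x)$ is a domain, $(x)$ is prime, and since $x$ is a non-zerodivisor, $(x)$ cannot be an associated prime; as minimal primes are associated, some prime $\fkp \subsetneq (x)$ exists, and writing $\fkp = (a_1 x, \ldots, a_s x)$ with $a_i \in \fkp$ (by primeness and $x \notin \fkp$) gives $\fkp = x\fkp$, whence $\fkp = 0$ by Nakayama and $R$ is a domain. Your argument is instead an elementary element-chase: from $ab = 0$ with $a, b \neq 0$, use $\bigcap_n (x^n) = 0$ (Krull intersection, which is where $x \in \fkm$ and Noetherianness enter) to extract maximal $x$-adic orders $s, t$ of $a, b$, factor out and cancel $x^{s+t}$, and contradict integrality of $R/(x)$. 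Both routes hinge on the same two hypotheses in essentially parallel ways — you use them via Krull intersection and $x$-adic order, the paper uses them via $\Ass$ and Nakayama — but your version avoids any appeal to associated primes and is closer to a "by hand" verification, while the paper's is shorter once the standard structural facts are granted. Either is an acceptable proof; no gaps.
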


\begin{proof}[Proof of Claim \ref{claim2}]
This should be a well-known fact, but we record a proof here since it may be difficult to get the book \cite{Ge}. 
By the assumption, $(x)\in \Spec R\setminus \Ass R$. Thus there exists $\fkp\in \Spec R$ such that $\fkp\subsetneq (x)$. Write $\fkp=(a_1x, a_2x, \ldots, a_sx)$ for $a_1, \ldots,a_s\in R$. Since $\fkp$ is prime and $x\notin \fkp$, $a_i\in\fkp$ for each $1\le i \le s$. It follows that $\fkp=x\fkp$; hence, $\fkp=0$ by Nakayama's lemma. This shows that $R$ is a domain. 
\end{proof}

(2) We write $u_i = \varphi u_i'$ and $v_{[i+1]} = \varphi v_{[i+1]}'$ for $u_i', v_{[i+1]}' \in R^I_J$. 
Note that $U_iV_i - U_{\nint{i-1}}V_{\nint{i+1}}$ is a part of minimal system of generators of $\detid_2\left(D^I_J\right)$. 
Hence, $u_iv_i - u_{\nint{i-1}}v_{\nint{i+1}} = \varphi(u'_iv_i - u_{\nint{i-1}}v'_{\nint{i+1}}) = 0$ implies that either $\varphi=0$ or $\varphi$ is a unit since $R^I_J$ is a domain by (1). Since $0\ne u_i\in (\varphi)$, $\varphi$ is a unit of $R^I_J$.
\end{proof}

Moreover, notice that
$$
[S^I_J]^{\oplus (n-2)n} \overset{M^I_J}{\longrightarrow} [S^I_J]^{\oplus (n-1)} \longrightarrow \canon_{R^I_J} \to 0
$$
is a minimal graded presentation of the canonical module $\canon_{R^I_J}$, where
$$
	M^I_J =\begin{pmatrix}
	V_2  & \cdots & V_1 & \\
	-U_1 & \cdots & -U_n & V_2  & \cdots & V_1 & \\
	 &  &  & -U_1 & \cdots & -U_n & \\
	 & & & & & & \ddots \\
	 & & & & & & &  V_2  & \cdots & V_1\\
	 & & & & & & &  -U_1  & \cdots & -U_n\\
	\end{pmatrix}.
$$
We put $N^I_J \in \Mat_{(n-1)\times n(n-2)}(R^I_J)$ whose $(i,j)$-entries are the images of those of $M^I_J$. We define $v_i$ and $u_i$ as the image of $V_i$ and $U_i$ in $R^I_J$ respectively, for $1 \le i \le n$.

In this section, we explore the problem of when $R^I_J$ is nearly Gorenstein. It may be worth noting that such a construction of $R^I_J$ is helpful to consider higher-dimensional examples based on numerical semigroup rings; e.g., see \cite{EGGHIKMT} for quasi-normality, and \cite{GIKT} for almost Gorenstein property. 
 
First, we note the following lemma, which follows from Remark \ref{new4.0} (1) and \cite[Proposition 2.3 (b)]{HHS}.

\begin{Lemma}\label{new4.5}
Suppose that $R^I_J$ is nearly Gorenstein. 
\begin{enumerate}[\rm(1)] 
\item Suppose that $p>0$. Let $i\in I$. Then, $R^{I\setminus \{i\}}_J$ is also nearly Gorenstein.
\item Suppose that $q>0$. Let $j\in J$. Then, $R^I_{J\setminus \{j\}}$ is also nearly Gorenstein.
\end{enumerate}
\end{Lemma}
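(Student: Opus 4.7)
The plan is to identify the smaller rings $R^{I\setminus\{i\}}_J$ and $R^I_{J\setminus\{j\}}$ as explicit regular quotients of $R^I_J$, and then invoke the specialization result \cite[Proposition 2.3(b)]{HHS} cited in the statement. In particular, I would not try to redo any trace-ideal computation directly; I would reduce to a general descent principle for nearly Gorensteinness modulo a nonzerodivisor.

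For the first step, note that in the matrix $D^I_J$ the indeterminate $Y_i$ occurs in exactly one entry, namely in $V_i = X_i^{m_i} + Y_i$, while every other entry of $D^I_J$ is independent of $Y_i$. Consequently the natural surjection $S^I_J \twoheadrightarrow S^I_J/(Y_i) \cong S^{I\setminus\{i\}}_J$ carries $D^I_J$ entry-by-entry to $D^{I\setminus\{i\}}_J$, and hence sends $\detid_2(D^I_J)$ onto $\detid_2(D^{I\setminus\{i\}}_J)$. This yields a canonical isomorphism $R^I_J/(y_i) \cong R^{I\setminus\{i\}}_J$. A symmetric argument, using that $Z_j$ appears in $D^I_J$ only through $U_j = X_j^{\ell_j} + Z_j$, produces $R^I_J/(z_j) \cong R^I_{J\setminus\{j\}}$.

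For the second step, Remark \ref{new4.0}(1) guarantees that $y_i$ (respectively $z_j$) is part of an $R^I_J$-regular sequence, so in particular a nonzerodivisor on the Cohen-Macaulay local domain $R^I_J$. Since $R^I_J$ is nearly Gorenstein by hypothesis, \cite[Proposition 2.3(b)]{HHS} implies that the quotient by any such regular element is again nearly Gorenstein. Applied to $y_i$ this gives that $R^I_J/(y_i) \cong R^{I\setminus\{i\}}_J$ is nearly Gorenstein, proving (1); applied to $z_j$ it gives (2).

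The argument is essentially mechanical; the only point that requires a moment of verification is the identification $R^I_J/(y_i) \cong R^{I\setminus\{i\}}_J$ at the level of the determinantal ideal, but this is transparent because each of the variables $Y_i$ and $Z_j$ appears in a single entry of $D^I_J$. No genuine obstacle arises.
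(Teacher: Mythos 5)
Your proof is correct and follows the same route the paper indicates: the paper states the lemma ``follows from Remark \ref{new4.0} (1) and \cite[Proposition 2.3 (b)]{HHS},'' which is precisely the combination of regular-element descent and the identification $R^I_J/(y_i)\cong R^{I\setminus\{i\}}_J$ (resp.\ $R^I_J/(z_j)\cong R^I_{J\setminus\{j\}}$) that you spell out. You have simply made explicit the verification that the quotient by $Y_i$ (or $Z_j$) carries $\detid_2(D^I_J)$ to $\detid_2(D^{I\setminus\{i\}}_J)$ (or $\detid_2(D^I_{J\setminus\{j\}})$), which the paper leaves implicit.
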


Lemma \ref{new4.5} leads us that if $R^I_J$ is nearly Gorenstein, then 
$R=k[[H]]$ is also nearly Gorenstein, whence, by Theorem \ref{main}, we can re-arrange the order of $X_1, X_2,\ldots ,X_n$ so that either
\begin{enumerate}[(a)]
	\item $m_1 = m_2 = \cdots = m_n = 1$ or
	\item $m_2 = m_3 = \cdots m_n = \ell_1 = \ell_2= \cdots =\ell_{n-2} = 1$.
\end{enumerate}

If $n=3$, then $\trace_{R^I_J} (\canon_{R^I_J})=I_1 (N^I_J)$ by \cite[Corollary 3.5]{HHS}. Thanks to this result, we can easily check the following two propositions.  

\begin{Proposition}\label{n3AGL}
	Suppose that $n = 3$ and $m_1=m_2=m_3=1$. Then $R^I_J$ is nearly Gorenstein if and only if $I\cap J=\emptyset$ and $\ell_i = 1$ for all $i \in I$.
\end{Proposition}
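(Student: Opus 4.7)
The plan is to reduce everything to a linear-algebra computation modulo $\mathfrak{m}^2$, using the formula for the trace ideal that is guaranteed to hold for $n=3$.

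First I would invoke the hint: when $n=3$, \cite[Corollary 3.5]{HHS} together with the presentation of $\canon_{R^I_J}$ displayed before the proposition gives
$$
\trace_{R^I_J}(\canon_{R^I_J}) = I_1(N^I_J) = (v_1, v_2, v_3, u_1, u_2, u_3)R^I_J.
$$
Thus $R^I_J$ is nearly Gorenstein if and only if each of $x_1, x_2, x_3$, each $y_i$ for $i \in I$, and each $z_j$ for $j \in J$ lies in $(v_1, v_2, v_3, u_1, u_2, u_3)$.

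The ``if'' direction is then a direct computation. If $r \notin I \cup J$, then $v_r = x_r$; if $r \in I$ (so $r \notin J$ and $\ell_r = 1$ by hypothesis), then $u_r = x_r$ and hence $y_r = v_r - u_r$ lies in the trace; if $r \in J$ (so $r \notin I$), then $v_r = x_r$ and $z_r = u_r - x_r^{\ell_r}$ lies in the trace. This covers every generator of $\mathfrak{m}$.

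For the ``only if'' direction, I would pass to $\mathfrak{m}/\mathfrak{m}^2$. Since every minimal generator of $\detid_2(D^I_J)$ is a product of two elements of the maximal ideal of $S^I_J$, the images of $x_1, x_2, x_3, \{y_i\}_{i\in I}, \{z_j\}_{j\in J}$ form a $k$-basis of $\mathfrak{m}/\mathfrak{m}^2$ of dimension $3 + |I| + |J|$. By Nakayama, $\mathfrak{m} \subseteq \trace$ iff the six images $\bar v_r, \bar u_r \in \mathfrak{m}/\mathfrak{m}^2$ span the whole space. The key observation is that $v_r$ and $u_r$ only involve $x_r$ and possibly $y_r, z_r$, so the span decouples across $r \in \{1,2,3\}$ into three independent subproblems inside the coordinate subspaces $W_r = \langle \bar x_r\rangle \oplus \langle \bar y_r\rangle^{\mathbb{1}_{r\in I}} \oplus \langle \bar z_r\rangle^{\mathbb{1}_{r\in J}}$.

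Then it is a short case analysis on whether $r \in I$, $r \in J$, and whether $\ell_r = 1$. The only hurdle, and I expect it to be routine, is bookkeeping: if $r \in I \cap J$ then $\dim_k W_r = 3$ but only the two vectors $\bar v_r, \bar u_r$ project nontrivially into $W_r$, so $W_r$ cannot be spanned, forcing $I \cap J = \emptyset$; and if $r \in I$ with $\ell_r \ge 2$ (and $r \notin J$), then $\bar u_r = 0$ in $\mathfrak{m}/\mathfrak{m}^2$, leaving only $\bar v_r = \bar x_r + \bar y_r$, which spans only a line inside the two-dimensional $W_r$, forcing $\ell_r = 1$. The remaining cases ($r \notin I \cup J$, or $r \in J \setminus I$ with any $\ell_r$) are automatically fine. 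Combining these constraints yields exactly $I \cap J = \emptyset$ and $\ell_i = 1$ for all $i \in I$, completing the proof.
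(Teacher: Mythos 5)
Your proof is correct and takes essentially the same approach as the paper, which simply cites \cite[Corollary 3.5]{HHS} to obtain $\trace_{R^I_J}(\canon_{R^I_J})=I_1(N^I_J)=(v_1,v_2,v_3,u_1,u_2,u_3)$ and then declares the rest an easy check. Your reduction modulo $\fkm^2$ with the decoupling into the coordinate subspaces $W_r$ is a tidy way to carry out the check the paper leaves implicit.
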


\begin{Proposition}\label{n3nonAGL}
	Suppose that $n = 3$, $m_2=m_3=\ell_1=1$ and $m_1 \ge 2$. Then $R^I_J$ is nearly Gorenstein if and only if $I\cap J=\emptyset$, $1 \notin J$, and $\ell_i = 1$ for all $i \in I$.
\end{Proposition}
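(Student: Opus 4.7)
The plan is to use the formula $\trace_{R^I_J}(\canon_{R^I_J}) = (v_1,v_2,v_3,u_1,u_2,u_3)$, which follows from $\trace_{R^I_J}(\canon_{R^I_J}) = I_1(N^I_J)$ (by \cite[Corollary 3.5]{HHS} for $n=3$) upon reading off the entries of $N^I_J$. Since every $v_i$ and $u_j$ lies in $\fkm$, the condition $\fkm \subseteq \trace_{R^I_J}(\canon_{R^I_J})$ is equivalent to $\fkm = (v_1,v_2,v_3,u_1,u_2,u_3)$, which by Nakayama's lemma is equivalent to saying that the images $\bar{v_i}, \bar{u_j}$ span $\fkm/\fkm^2$ over $k$. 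Note that $\dim_k \fkm/\fkm^2 = 3 + |I| + |J|$, because every entry of $D^I_J$ lies in $\fkm_{S^I_J}$ and hence $\detid_2(D^I_J) \subseteq \fkm_{S^I_J}^2$.

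Next I would compute these images explicitly under the standing hypothesis $m_2 = m_3 = \ell_1 = 1$ and $m_1 \ge 2$. Because $m_1 \ge 2$, $\bar{v_1}$ equals $\bar{y_1}$ when $1 \in I$ and $0$ otherwise; for $k \in \{2,3\}$, $\bar{v_k}$ equals $\bar{x_k}+\bar{y_k}$ when $k \in I$ and $\bar{x_k}$ otherwise. Symmetrically, $\bar{u_1}$ equals $\bar{x_1}+\bar{z_1}$ when $1 \in J$ and $\bar{x_1}$ otherwise; for $k \in \{2,3\}$, $\bar{u_k}$ picks up $\bar{z_k}$ exactly when $k \in J$ and picks up $\bar{x_k}$ exactly when $\ell_k = 1$.

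The crucial observation is that the spanning problem decouples across $i \in \{1,2,3\}$: only $\bar{v_i}$ and $\bar{u_i}$ have nonzero components in the subspace of $\fkm/\fkm^2$ spanned by $\bar{x_i}$, $\bar{y_i}$ (if $i \in I$), and $\bar{z_i}$ (if $i \in J$), and these subspaces for different $i$ are disjoint in the basis. Necessity then follows from three dimensional obstructions: if $1 \in J$, then $\bar{v_1} \in \langle \bar{y_1}\rangle$ and $\bar{u_1} = \bar{x_1}+\bar{z_1}$ together cannot span the two-dimensional subspace $\langle \bar{x_1},\bar{z_1}\rangle$; if $i \in I \cap J$ for some $i \in \{2,3\}$, then the three-dimensional subspace $\langle \bar{x_i},\bar{y_i},\bar{z_i}\rangle$ must be reached from only the two vectors $\bar{v_i},\bar{u_i}$; and if $i \in I \cap \{2,3\}$ with $\ell_i \ge 2$, then $\bar{u_i} \in \langle \bar{z_i}\rangle$ while $\bar{v_i} = \bar{x_i}+\bar{y_i}$, so $\bar{x_i}$ and $\bar{y_i}$ cannot be separated. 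Together with the given $\ell_1 = 1$, these obstructions force exactly the stated conditions $1 \notin J$, $I \cap J = \emptyset$, and $\ell_i = 1$ for all $i \in I$.

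For sufficiency, under these conditions each basis vector of $\fkm/\fkm^2$ can be written explicitly as a $k$-linear combination of the $\bar{v_i},\bar{u_j}$: for instance $\bar{x_1} = \bar{u_1}$, and $\bar{y_1} = \bar{v_1}$ when $1 \in I$; at index $k \in \{2,3\}$ with $k \in I$, one has $\bar{x_k} = \bar{u_k}$ (since then $\ell_k = 1$ and $k \notin J$) and $\bar{y_k} = \bar{v_k} - \bar{u_k}$; and when $k \in J$ (so $k \notin I$), one recovers $\bar{z_k}$ as either $\bar{u_k} - \bar{v_k}$ (if $\ell_k = 1$) or $\bar{u_k}$ (if $\ell_k \ge 2$), while $\bar{x_k} = \bar{v_k}$. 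Nakayama's lemma then yields $\fkm = (v_1,\ldots,u_3)$. The main obstacle is not any clever argument but rather the bookkeeping across the subcases for each $i \in \{1,2,3\}$ (whether $i$ lies in $I$, $J$, or neither, and whether $\ell_i = 1$ or $\ell_i \ge 2$); the decoupling observation across $i$ is what keeps the analysis tractable.
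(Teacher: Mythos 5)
Your proof is correct and uses essentially the same approach the paper intends: apply \cite[Corollary 3.5]{HHS} for $n=3$ to get $\trace_{R^I_J}(\canon_{R^I_J}) = I_1(N^I_J) = (v_1,v_2,v_3,u_1,u_2,u_3)$, then reduce the nearly-Gorenstein condition via Nakayama to a spanning statement in $\fkm/\fkm^2$. The index-by-index decoupling observation and the explicit case analysis fill in the details the paper condenses to ``we can easily check.''
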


The following two theorems answer the case of $n \ge 4$. For simplicity, we denote $R^I_J$ by $R^I$ (resp. $R_J$) if $J=\emptyset$ (resp. $I=\emptyset$). 

\begin{Theorem}\label{newAGcase}
	Suppose $n \ge 4$ and $m_1 = m_2 = \cdots = m_n = 1$. Then we have the following.
	\begin{enumerate}[{\rm (1)}]
		\item {\rm ($|I|+|J|=1$ case):} Let $1 \le i \le n$. 
		\begin{enumerate}[\rm(a)] 
		\item $R^{\{i\}}$ is nearly Gorenstein if and only if $\ell_{i} = \ell_{\nint{i+1}} = \cdots = \ell_{\nint{i+n-3}} = 1$.
		\item $R_{\{i\}}$ is nearly Gorenstein if and only if $\ell_{\nint{i+1}} = \ell_{\nint{i+2}} = \cdots = \ell_{\nint{i+n-3}} = 1$.
		\end{enumerate}
		
		\item {\rm ($|I|+|J|=2$ case):} 
		\begin{enumerate}[\rm(a)] 
		\item Let $1 \le i < j \le n$. $R_{\{i,j\}}$ is nearly Gorenstein if and only if $n=4$, $(i,j) \in \{(1,3), (2,4)\}$, and $\ell_{i+1} = \ell_{\nint{j+1}} = 1$.
		\item Let $1 \le i \le n$ and $1 \le j \le n$. $R^{\{i\}}_{\{j\}}$ is nearly Gorenstein if and only if $n=4$, $(i,j) \in \{(1,3), (2,4), (3,1),(4,2)\}$, and $\ell_{i} = \ell_{\nint{i+1}} = \ell_{\nint{j+1}} = 1$.
		\item For any $1 \le i < j \le n$, $R^{\{i,j\}}$ is not nearly Gorenstein.
		\end{enumerate}
		
		\item {\rm ($|I|+|J|\ge 3$ case):} In this case, $R^I_J$ can never become nearly Gorenstein.
	\end{enumerate}
\end{Theorem}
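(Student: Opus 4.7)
The plan is to invoke Corollary \ref{Comp_canontrace} for the Eagon--Northcott minimal presentation of $\canon_{R^I_J}$ by the matrix $M^I_J$ recorded just after Remark \ref{new4.0}, reducing the problem to: \emph{determine when the ideal generated by the entries of all tuples $\mathbf{f}=(f_1,\ldots,f_{n-1})$ in $R^I_J$ satisfying $V_{\nint{i+1}}f_j = U_i f_{j+1}$ for all $1\le i\le n$ and $1\le j\le n-2$ contains $\fkm=(x_1,\ldots,x_n,\{y_i\}_{i\in I},\{z_j\}_{j\in J})$}.

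For the \emph{necessary direction}, Lemma \ref{new4.5} passes nearly Gorensteinness to every sub-ring $R^{I'}_{J'}$ with $I'\subseteq I$ and $J'\subseteq J$. This instantly handles Case (3) for $n\ge 5$, since by (2a)--(2c) no $|I'|+|J'|=2$ sub-ring is nearly Gorenstein in that range; for $n=4$ I would finish by a finite case analysis of the four possible $|I|+|J|=3$ configurations, each forcing an inadmissible sub-ring (typically an $R^{\{i,j\}}$ killed by (2c), or two non-matching pairs in the short lists of (2a)/(2b)). To obtain the precise patterns in Cases (1) and (2), I plan to adapt Lemma \ref{key} and Corollary \ref{key_cor} to $R^I_J$: an entry $x_p$ appearing in a syzygy $\mathbf{f}$ is constrained by single-column relations of $N^I_J$ to occupy a specific position whenever $\ell_p\ge 2$, and iterating these constraints (together with Corollary \ref{graded dim2} applied to the graded piece in which $x_p$ and a perturbation variable coexist) propagates the chain of implications that forces the listed $\ell$-patterns and the index restrictions on $I,J$.

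For the \emph{sufficient direction} in Case (1a), after a permutation making $i=1$ so that $\ell_1=\cdots=\ell_{n-2}=1$, I would exhibit three families of syzygies: (i) $(x_1,x_2,\ldots,x_{n-1})$, immediately giving $x_1,\ldots,x_{n-1}\in\trace$; (ii) the ``lower-shift'' tuple $(x_2 x_{n-1}^{\ell_{n-1}-1},\,\ldots,\,x_{n-2} x_{n-1}^{\ell_{n-1}-1},\,x_{n-1}^{\ell_{n-1}},\,x_n)$, whose validity follows from the $2$-minor identities $v_{\nint{i+1}}u_j=v_{\nint{j+1}}u_i$ and which contributes $x_n$; and (iii) a tuple $\mathbf{h}=(h_1,\ldots,h_{n-2},v_1)$ defined recursively from $h_{n-2}=u_n=x_n^{\ell_n}$ and the syzygy equations at $i=n$, so that formally $h_{n-1-k}=u_n^k/v_1^{k-1}$. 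This last tuple captures $v_1=x_1+y_1$, and combined with (i) yields $y_1\in\trace$. Case (1b) is symmetric, with the perturbation tuple now \emph{starting} in $u_n$ rather than ending in $v_1$; the subcases of (2) come from superimposing two such perturbation tuples, where the hypotheses (2a), (2b), (2c) are precisely those that make the superposition consistent and which collapse to the listed configurations for $n=4$.

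The \emph{main obstacle} I anticipate is the explicit construction of $\mathbf{h}$ in (1a) (and its mirror in (1b)) for arbitrary $\ell_{n-1},\ell_n$. Via $u_n/v_1=u_{n-1}/v_n$ the recursion yields the closed form $h_{n-1-k}=x_{n-1}^{(k-1)\ell_{n-1}}x_n^{\ell_n-k+1}$, which is a genuine monomial in $R^I_J$ only when $\ell_n\ge k-1$. Otherwise the power of $x_n$ is negative, and I plan to eliminate it by iterated use of $x_n x_{n-2}=x_{n-1}^{\ell_{n-1}+1}$ (which replaces $x_n^{-1}$ by $x_{n-2}/x_{n-1}^{\ell_{n-1}+1}$) and, when the exponent of $x_{n-1}$ in turn becomes negative, $x_{r+1}x_{r-1}=x_r^2$ (valid for $2\le r\le n-2$ since $\ell_r=\ell_{r-1}=1$). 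The technical heart of the proof is then to verify that a single consistent choice of rewritten monomial simultaneously satisfies \emph{every} syzygy equation $V_{\nint{i+1}}h_j=U_i h_{j+1}$, not merely the $i=n$ equations used to define the recursion.
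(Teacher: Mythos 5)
Your high-level plan --- reduce via Lemma \ref{new4.5} to small $|I|+|J|$, exhibit explicit elements of $\Lambda$ for the ``if'' directions, and constrain the possible syzygy tuples via the graded structure for the ``only if'' directions --- matches the paper's strategy, and your first two tuples in (1a) are exactly the paper's $\mathbf{f}_1, \mathbf{f}_2$. The gap is in your third tuple $\mathbf{h}$. Your closed form $h_{n-1-k} = x_{n-1}^{(k-1)\ell_{n-1}} x_n^{\ell_n-k+1}$ develops a negative exponent as soon as $k > \ell_n+1$, and you never actually carry out the proposed ``iterated use of $x_n x_{n-2} = x_{n-1}^{\ell_{n-1}+1}$'' rewriting. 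In fact the rewriting is unnecessary: the paper writes the same element of $\Lambda$ directly as $f_j = x_{j+2}\, x_{n-1}^{\ell_{n-1}-1} x_n^{\ell_n-1}$ for $1 \le j \le n-3$, $f_{n-2} = x_n^{\ell_n}$, $f_{n-1} = x_1 + y_1$, a manifestly monomial form (both exponents $\ge 0$ since $\ell_{n-1}, \ell_n \ge 1$) in which the syzygy identities $v_{\nint{i+1}} f_j = u_i f_{j+1}$ reduce transparently to the $2$-minor relations for \emph{every} $i$, not just $i=n$. Note also that your worry about ``a single consistent choice of rewritten monomial'' satisfying all syzygies is misplaced: the syzygy conditions are identities in $R^I_J$, hence independent of the representative; the genuine verification, which your recursion-from-the-end obscures, is that the $i\ne n$ equations hold, and the paper's closed form makes that immediate. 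Your treatment of (1b) and (2a)--(2c) as ``symmetric'' or ``superimpositions of perturbation tuples'' is a placeholder rather than a construction; the paper tabulates the concrete tuples case by case, and they are not literal superpositions.

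For the necessity, the one-sentence plan to ``adapt Lemma \ref{key} and Corollary \ref{key_cor}'' substantially understates what is required. The paper builds a chain of lemmas: Lemmas \ref{lem1} and \ref{lem2} give positional constraints on where $x_i$, $v_i$, $u_j$ can sit inside a tuple in $\Lambda$; Propositions \ref{propp>0}--\ref{prop2q>0} propagate those constraints inductively through the $N^I_J$-array, forcing $\nint{i\pm 1}, \nint{i\pm 2},\ldots \notin I\cup J$ together with inequalities among the $\ell_\alpha$; and Lemma \ref{lambda} uses the grading (Corollary \ref{graded dim2} on the degree-$a_i$ piece, together with the regular-sequence facts in Remark \ref{new4.0}) to show that nearly Gorensteinness forces \emph{both} a tuple containing $v_i$ and a tuple containing $x_i$ for each $i\in I$. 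It is precisely this inductive propagation that delivers the $n=4$ and index-pattern restrictions in (2a)--(2c); your sketch gestures at these ingredients but would need to be fully developed to close the argument.
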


\begin{Theorem}\label{newnonAGcase}
	Suppose $n \ge 4$. Assume that $m_2 = m_3 = \cdots = m_n = \ell_1 = \ell_2=\cdots = \ell_{n-2}=1$, $m_1 \ge 2$, and ``$\ell_{n-1} \ge 2$ or $\ell_n \ge 2$''.
	Then we have the following.
	\begin{enumerate}[{\rm (1)}]
		\item {\rm ($|I|+|J|=1$ case):} Let $1 \le i \le n$. 
		\begin{enumerate}[\rm(a)] 
		\item $R^{\{i\}}$ is nearly Gorenstein if and only if either $i=1$ or ``$i=n$ and $\ell_n = 1$''.
		\item $R_{\{i\}}$ is nearly Gorenstein if and only if either $i=n$ or ``$i=n-1$ and $\ell_n = 1$''.
		\end{enumerate}
		
		\item {\rm ($|I|+|J|=2$ case):} 
		\begin{enumerate}[\rm(a)] 
		\item For any $1\le i < j \le n$, $R^{\{i,j\}}$ and $R_{\{i,j\}}$ is not nearly Gorenstein.
		\item Let $1 \le i \le n$ and $1 \le j \le n$. $R^{\{i\}}_{\{j\}}$ is nearly Gorenstein if and only if $n=4$, $i=1$, $j=3$, and $\ell_4 = 1$.
		\end{enumerate}
		
		\item {\rm ($|I|+|J|\ge 3$ case):} In this case, $R^I_J$ can never become nearly Gorenstein.
	\end{enumerate}
\end{Theorem}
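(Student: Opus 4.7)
The plan is to adapt the combinatorial analysis of Theorem \ref{main} to the perturbed ring $R^I_J$, splitting the argument by the value of $|I|+|J|$. Throughout, Lemma \ref{new4.5} serves as the main reduction tool: nearly-Gorensteinness descends to any sub-configuration $R^{I'}_{J'}$ with $I' \subseteq I$ and $J' \subseteq J$, and in particular forces $R = k[[H]]$ itself to be nearly Gorenstein, placing us in case (2) of Theorem \ref{main} and justifying the standing numerical hypotheses.

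The first substantive step is to establish perturbed analogues of Lemma \ref{key} inside $R^I_J$: for $\mathbf{f} \in \Lambda$ defined from $N^I_J$ as in Corollary \ref{Comp_canontrace}, if $m_i \ge 2$ and $v_i$ appears among the entries of $\mathbf{f}$, then $f_1 = v_i$; and symmetrically, if $\ell_i \ge 2$ and $u_i$ appears, then $f_{n-1} = u_i$. The proof mirrors that of Lemma \ref{key}: an appearance of $v_i$ (resp.\ $u_i$) in a non-extremal slot yields a relation drawn from $\mathbf{f} \cdot N^I_J = 0$ which, after cancelling a domain factor (Remark \ref{new4.0}(1)) and applying the unit criterion of Remark \ref{new4.0}(2), forces a divisibility contradicting the minimality of the generators of $\detid_2(D^I_J)$ given by the $2 \times 2$ minors. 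The principal obstacle is controlling the perturbation terms $Y_i$ and $Z_j$ when they sit exactly at the index being manipulated; the hypotheses $m_i \ge 2$ and $\ell_i \ge 2$ are what prevent such terms from being absorbed into the domain cancellation.

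Granted the adapted key lemmas, Part (1) follows by a case analysis modeled on Claim 1 in the proof of Theorem \ref{main}: for $R^{\{i\}}$ nearly Gorenstein, $y_i \in \trace_{R^{\{i\}}}(\canon_{R^{\{i\}}})$ forces some $\mathbf{f} \in \Lambda$ with $v_i \in \mathbf{f}$, and iterating the constraint chain singles out $i = 1$ or ``$i = n$ with $\ell_n = 1$''. The converse ``if'' direction is an explicit construction of vectors in $\Lambda$ patterned on the two vectors $(x_1, \ldots, x_{n-1})$ and $(x_2 x_{n-1}^{\ell_{n-1}-1}, \ldots, x_{n-1}^{\ell_{n-1}}, x_n)$ used in the proof of Theorem \ref{main}, verified directly via the $2$-minor relations of $D^I_J$. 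The analysis of $R_{\{i\}}$ is the same argument with the two rows of the matrix playing swapped roles, shifting the admissible indices to $i = n$ or ``$i = n-1$ with $\ell_n = 1$''.

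Part (2) combines Lemma \ref{new4.5} with further applications of the key lemmas. For $R^{\{i,j\}}$ and $R_{\{i,j\}}$, the restrictions from Part (1) force $\{i,j\} \subseteq \{1,n\}$ and $\{i,j\} \subseteq \{n-1,n\}$ respectively; the surviving candidates are then eliminated by the additional incompatibility that the second perturbation imposes on any vector in $\Lambda$ containing the new variable, again via the key lemmas. For $R^{\{i\}}_{\{j\}}$, Part (1) restricts to $i \in \{1,n\}$ and $j \in \{n-1,n\}$, and only the combination $(i,j) = (1, n-1)$ with $n = 4$ and $\ell_n = 1$ survives — verified by an explicit construction of generators of the trace ideal. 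Finally, Part (3) is formal: if $|I|+|J| \ge 3$, then either $|I| \ge 2$ or $|J| \ge 2$, so Lemma \ref{new4.5} produces a sub-ring of the form $R^{\{i,j\}}$ or $R_{\{i,j\}}$ that would have to be nearly Gorenstein, contradicting Part (2)(a).
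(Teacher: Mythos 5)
The overall strategy you outline — reduce via Lemma \ref{new4.5}, establish position-forcing lemmas for entries of $\mathbf{f} \in \Lambda$, iterate the resulting constraints, and verify the ``if'' directions by explicit constructions — is indeed the same framework the paper uses. However, there are two substantive errors in how you formulate the ``perturbed analogue of Lemma \ref{key},'' and one nontrivial step you gloss over, and both would derail a full write-up.

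First, the positions are backwards and the hypotheses are miscast. The paper's Lemma \ref{lem2} and Propositions \ref{propp>0}--\ref{propq>0} show that if $i \in I$ and $v_i$ appears in $\mathbf{f} \in \Lambda$ (times a suitable homogeneous factor), then it is forced into the \emph{last} slot, $f_{n-1} = v_i$, while $u_j$ for $j \in J$ is forced into the \emph{first} slot, $f_1 = u_j$. Your claim has these two reversed. Moreover, the correct hypothesis is membership $i \in I$ (so that $v_i$, $u_i$ form an $R^I_J$-regular sequence, making Remark \ref{new4.0}(2) applicable), not $m_i \ge 2$; the numerical conditions $m_i \ge 2$, $\ell_i \ge 2$ govern the position of the \emph{variable} $x_i$ itself (as in Propositions \ref{prop2p>0} and \ref{prop2q>0}, generalizing Lemma \ref{key}), which is a separate family of position lemmas. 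Running the iteration in the wrong direction or with the wrong hypothesis would produce incorrect numerical constraints: for instance, the paper's Proposition \ref{propp>0} iterates \emph{backward} from the index $i$ and yields $m_\alpha \le \ell_\alpha$ along the way, which is precisely what contradicts $m_1 \ge 2$, $\ell_1 = 1$ and excludes $2 \le i \le n-1$ in part (1)(a).

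Second, the passage from ``$y_i \in \trace_{R^I_J}(\canon_{R^I_J})$'' to ``$v_i \in \mathbf{f}$ for some $\mathbf{f} \in \Lambda$'' is not immediate in the higher-dimensional setting. Unlike the numerical semigroup case where $\dim_k A_p \le 1$ (so Corollary \ref{graded dim1} applies directly), here the graded components $[R^I_J]_{m_i a_i}$ have dimension at least $2$ when $i \in I$, and one must invoke the full strength of Proposition \ref{graded} and Corollary \ref{graded dim2}, together with a regular-sequence argument, to isolate a specific $k$-linear combination $c x_i^{m_i} + d y_i$ as an entry of some $\mathbf{f} \in \Lambda$ and then pin down $c = d$. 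This is exactly the content of the paper's Lemma \ref{lambda}, which also establishes $I \cap J = \emptyset$ — a fact your proposal tacitly assumes but does not prove. These gaps are filling the same slots your outline identifies, so the overall plan is sound, but the position lemma as you have stated it is false and the trace-to-$\Lambda$ reduction requires the graded machinery of Section 2 that you have not accounted for.
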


Thanks to Lemma \ref{new4.5}, to prove Theorems \ref{newAGcase} and \ref{newnonAGcase}, it is enough to prove the following.

\begin{Theorem}\label{AGcase}
	Suppose $n \ge 4$ and $m_1 = m_2 = \cdots = m_n = 1$. Then we have the following.
	\begin{enumerate}[{\rm (1)}]
		\item Let $1 \le i \le n$. $R^{\{i\}}$ is nearly Gorenstein if and only if $\ell_{i} = \ell_{\nint{i+1}} = \cdots = \ell_{\nint{i+n-3}} = 1$.
		\item Let $1 \le i \le n$. $R_{\{i\}}$ is nearly Gorenstein if and only if $\ell_{\nint{i+1}} = \ell_{\nint{i+2}} = \cdots = \ell_{\nint{i+n-3}} = 1$.
		\item Let $1 \le i < j \le n$. $R_{\{i,j\}}$ is nearly Gorenstein if and only if $n=4$, $(i,j) \in \{(1,3), (2,4)\}$, and $\ell_{i+1} = \ell_{\nint{j+1}} = 1$.
		\item Let $1 \le i \le n$ and $1 \le j \le n$. $R^{\{i\}}_{\{j\}}$ is nearly Gorenstein if and only if $n=4$, $(i,j) \in \{(1,3), (2,4), (3,1),(4,2)\}$, and $\ell_{i} = \ell_{\nint{i+1}} = \ell_{\nint{j+1}} = 1$.
		\item For any $1 \le i < j \le n$, $R^{\{i,j\}}$ is not nearly Gorenstein.
		\item For any $1\le i  \le n$ and $1 \le j<k \le n$, $R_{\{j,k\}}^{\{i\}}$ is not nearly Gorenstein.
		\item For any $1 \le i < j < k \le n$, $R_{\{i,j,k\}}$ is not nearly Gorenstein.
	\end{enumerate}
\end{Theorem}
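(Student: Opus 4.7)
The plan is to follow the strategy of Section \ref{1dim}, adapted to the higher-dimensional setting. I would first grade $S^I_J$ by setting $\deg X_r = \deg Y_r = a_r$ for all $r$ and $\deg Z_r = \ell_r a_r$, so that each entry of $D^I_J$ is homogeneous; consequently $R^I_J$ is a graded domain (Remark \ref{new4.0}(1)) and $\canon_{R^I_J}$ is a graded module with minimal presentation given by $N^I_J$. Applying Corollary \ref{Comp_canontrace}, I have $\trace_{R^I_J}(\canon_{R^I_J}) = \sum_{\mathbf{f} \in \Lambda} (f_1, f_2, \ldots, f_{n-1}) R^I_J$, where $\Lambda$ denotes the left kernel of $N^I_J$. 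For each $\mathbf{f} = (f_1, \ldots, f_{n-1}) \in \Lambda$, the defining relations read
$$
v_{\nint{i+1}} f_j = u_i f_{j+1} \qquad (1 \le i \le n,\ 1 \le j \le n-2).
$$

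The next step is to establish a key lemma, an analog of Lemma \ref{key}, describing the positional constraints on entries of $\mathbf{f}\in \Lambda$. Because $R^I_J$ is a domain and the pair $(u_i, v_{\nint{i+1}})$ is coprime in the sense of Remark \ref{new4.0}(2), the relation $v_{\nint{i+1}} f_j = u_i f_{j+1}$ forces $v_{\nint{i+1}} \mid f_{j+1}$ and $u_i \mid f_j$ whenever the side is nonzero. Propagating this along indices $(i,j)$, I can show that if $y_i$ (or $x_i$ when $\ell_i \ge 2$, etc.) appears as an entry of $\mathbf{f}$, then its position is essentially forced and the remaining entries are heavily constrained. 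To determine when a specific generator $x_i$, $y_i$, or $z_j$ of the maximal ideal $\mathfrak{m}$ lies in the trace, I would apply Corollary \ref{graded dim1} in the graded pieces of $k$-dimension one, and Corollary \ref{graded dim2} in graded pieces of $k$-dimension two (for example, $[R^I_J]_{a_i}$ when $i \in I$, which contains both $x_i$ and $y_i$).

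With the key lemma in hand, each part of the theorem reduces to a direct case analysis. For the ``if'' direction in parts (1)--(4), I exhibit explicit $\mathbf{f} \in \Lambda$, modeled on the two witnesses $(x_1, \ldots, x_{n-1})$ and $(x_2 x_{n-1}^{\ell_{n-1}-1}, \ldots, x_{n-1}^{\ell_{n-1}}, x_n)$ used in the proof of Theorem \ref{main}, but modified to incorporate the extra $y_i$ or $z_j$ terms whenever those generators of $\mathfrak{m}$ need to be hit. For the ``only if'' direction, the key lemma shows that the required generators of $\mathfrak{m}$ can appear in the trace only under the stated numerical and cardinality restrictions; the restriction $n=4$ in (3)--(4) arises because otherwise the forced positions of the two perturbation terms (two $z_j$'s, or a $y_i$ and a $z_j$) are incompatible with the length $n-1$ of $\mathbf{f}$. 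Parts (5)--(7), which are the negative statements, follow by showing that no single $\mathbf{f} \in \Lambda$ can simultaneously place all the required $y_i$ and $z_j$ entries in the positions dictated by the key lemma.

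The main obstacle I anticipate is the combinatorial bookkeeping in the negative cases (5)--(7) and in the ``only if'' halves of (3)--(4): one must exhaustively rule out every configuration of an $\mathbf{f} \in \Lambda$ that could place certain distinguished elements simultaneously in the trace. When two perturbation terms $y_i$, $z_j$ are present, the non-monomial shape of $V_r = X_r + Y_r$ and $U_r = X_r^{\ell_r} + Z_r$ introduces mixed terms in $v_{\nint{i+1}} f_j = u_i f_{j+1}$ that must be expanded and whose coefficients must be tracked graded-piece by graded-piece, so that the divisibility-based propagation remains valid. Once this bookkeeping is carried out, the positive constructions in (1)--(4) reduce to routine verification that the exhibited tuples lie in $\Lambda$ and span $\mathfrak{m}$ modulo the image.
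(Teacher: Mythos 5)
Your high-level strategy — grade the ring, use Corollary~\ref{Comp_canontrace} to describe $\trace_{R^I_J}(\canon_{R^I_J})$ via the left kernel $\Lambda$ of $N^I_J$, then obtain positional constraints on entries of any $\mathbf{f}\in\Lambda$ and combine with Corollaries~\ref{graded dim1} and~\ref{graded dim2} — does mirror what the paper does. However, the central technical step on which you would build everything else is not valid.

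You claim that because $u_i$ and $v_{\nint{i+1}}$ have no common divisor (Remark~\ref{new4.0}(2)), the equation $v_{\nint{i+1}} f_j = u_i f_{j+1}$ forces $v_{\nint{i+1}}\mid f_{j+1}$ and $u_i\mid f_j$. This ``coprimality implies divisibility'' step is false in $R^I_J$, which is a determinantal ring and far from a UFD. A concrete counterexample already occurs with no perturbations at all: take $n=4$, all $m_i=\ell_i=1$, $I=J=\emptyset$, so $u_r=v_r=x_r$, and $\mathbf{f}=(x_1,x_2,x_3)\in\Lambda$. The relation with $i=1$, $j=2$ reads $x_2\cdot x_2=x_3\cdot x_1$, and $x_2,x_1$ have no common divisor, yet $x_2\nmid x_3$ and $x_1\nmid x_2$. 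So the propagation you describe would break down immediately, even before any $y_i$ or $z_j$ enter. What is actually true — and what the paper proves as Lemma~\ref{lem1}, using the explicit decomposition of $\detid_2(D^I_J)$ and the fact that $X_1,X_2$ is a regular sequence on $S^I_J/\detid_2(D')$ for a suitable submatrix $D'$ — is the much weaker conclusion that $\varphi v_{\nint{i+1}}\in(u_i)$ implies $\varphi\in(u_1,\ldots,u_n)$, i.e.\ membership in the ideal generated by all the $u_k$'s rather than divisibility by $u_i$. That weaker statement, combined with the regular-sequence facts in Remark~\ref{new4.0}(1), is what powers the positional lemmas (the paper's Lemma~\ref{lem2} and Propositions~\ref{propp>0}--\ref{prop2q>0}). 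Without replacing your divisibility claim by the correct ideal-membership version, the subsequent forced-position arguments cannot be established.

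Two smaller omissions: you never invoke the descent Lemma~\ref{new4.5} (deleting an index from $I$ or $J$ preserves nearly Gorensteinness), which the paper uses repeatedly to reduce parts (3)--(7) to the already-settled cases with $|I|+|J|\le 1$; without it, the casework in (5)--(7) becomes much heavier than you acknowledge. Also, Corollary~\ref{graded dim2} shows only that \emph{some} $k$-linear combination of $x_i$ and $y_i$ (resp.\ $x_j$ and $z_j$) appears in an $\mathbf{f}\in\Lambda$; the paper's Lemma~\ref{lambda} must then do genuine work to pin that combination down to be exactly $v_i=x_i^{m_i}+y_i$ (resp.\ $u_j=x_j^{\ell_j}+z_j$) and separately to force $x_i$ (resp.\ $x_j$) into some $\mathbf{g}\in\Lambda$; your sketch treats ``$x_i$, $y_i$, or $z_j$ lies in the trace'' as if each generator is handled independently, which glosses over this.
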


\begin{Theorem}\label{nonAGcase}
	Suppose $n \ge 4$. Assume that $m_2 = m_3 = \cdots = m_n = \ell_1 = \ell_2=\cdots = \ell_{n-2}=1$, $m_1 \ge 2$, and ``$\ell_{n-1} \ge 2$ or $\ell_n \ge 2$''.
	Then we have the following.
	\begin{enumerate}[{\rm (1)}]
		\item Let $1 \le i \le n$. $R^{\{i\}}$ is nearly Gorenstein if and only if either $i=1$ or $i=n$ and $\ell_n = 1$.
		\item Let $1 \le i \le n$. $R_{\{i\}}$ is nearly Gorenstein if and only if either $i=n$ or $i=n-1$ and $\ell_n = 1$.
		\item For any $1\le i < j \le n$, $R^{\{i,j\}}$ and $R_{\{i,j\}}$ is not nearly Gorenstein.
		\item Let $1 \le i \le n$ and $1 \le j \le n$. $R^{\{i\}}_{\{j\}}$ is nearly Gorenstein if and only if $n=4$, $i=1$, $j=3$, and $\ell_4 = 1$.
	\end{enumerate}
\end{Theorem}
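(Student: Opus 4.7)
The plan is to adapt the technique from the proof of Theorem \ref{main} by combining Corollary \ref{Comp_canontrace}, Proposition \ref{graded} together with its corollaries, and the inductive-reduction Lemma \ref{new4.5}. Under the standing hypotheses the presentation matrix $N^I_J$ is almost monomial, with non-monomial entries only at positions governed by $V_1 = x_1^{m_1}+y_1$ (when $1 \in I$) and by $U_j = x_j^{\ell_j}+z_j$ (when $j \in J$), so the relations $v_{\nint{i+1}}f_j = u_i f_{j+1}$ forced by $\mathbf{f}\in\Lambda$ split into monomial and perturbed columns, and the chain propagation that drove the proof of Lemma \ref{key} still applies with minor modifications.

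For the positive directions in parts (1), (2), and (4)---namely that $R^{\{1\}}$ and $R_{\{n\}}$ are nearly Gorenstein, that $R^{\{n\}}$ and $R_{\{n-1\}}$ are nearly Gorenstein when $\ell_n=1$, and that $R^{\{1\}}_{\{3\}}$ is nearly Gorenstein for $n=4$ with $\ell_4=1$---I would lift the two explicit vectors from the proof of Theorem \ref{main}, $\begin{pmatrix} x_1 & x_2 & \cdots & x_{n-1}\end{pmatrix}$ and $\begin{pmatrix}x_2 x_{n-1}^{\ell_{n-1}-1} & \cdots & x_{n-1}^{\ell_{n-1}} & x_n\end{pmatrix}$, to elements of $\Lambda$ in $R^I_J$; a direct check using the 2-minor relations in $R^I_J$ shows they still lie in $\Lambda$. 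To recover the additional generator $y_1$ (resp.\ $z_n$), I would exhibit one further vector of $\Lambda$, produced by replacing an $x_1^{m_1}$ entry of a suitable combination by $x_1^{m_1}+y_1$, and then use the identity $y_1 = (x_1^{m_1}+y_1) - x_1^{m_1}$ to conclude $y_1 \in \trace_{R^I_J}(\canon_{R^I_J})$.

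For the negative directions I would mimic Lemma \ref{key} and Corollary \ref{key_cor}: if a maximal-ideal generator, say $y_i$ for $i \notin \{1,n\}$, were to lie in $\trace_{R^I_J}(\canon_{R^I_J})$, then Proposition \ref{graded} would supply vectors in $\Lambda$ whose entries jointly span the graded piece containing $y_i$; chasing the identities $v_{\nint{r+1}}f_j = u_r f_{j+1}$ through the monomial columns of $N^I_J$ and using the domain property of Remark \ref{new4.0}(1) to cancel would force the entries into a specific monomial form, until the perturbed columns together with the hypotheses $m_1\ge 2$ and ``$\ell_{n-1}\ge 2$ or $\ell_n\ge 2$'' yield an incompatibility---e.g.\ a factorization of $x_n^{\ell_n}$ modulo $(x_1)$ that contradicts the minimality of the generators of $I_H$. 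Lemma \ref{new4.5} then bootstraps these single-index obstructions into nonexistence statements for all but a short list of pairs $(i,j)$, and the surviving pairs such as $\{i,j\}=\{1,n\}$ in part (3) are eliminated by observing directly that $y_1$ and $y_n$ cannot simultaneously occur as entries of vectors in $\Lambda$ compatible with the remaining relations.

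The main obstacle will be formulating the perturbed analog of Lemma \ref{key}: when $V_1$ or $U_j$ is non-monomial, the cancellation step from the original proof---which was reduced to a minimal-generator analysis of $I_H+(X_1)$---must be replaced by an argument using the regular sequence $x_r, y_1, z_j, \ldots$ from Remark \ref{new4.0}(1), together with the unit-detecting Remark \ref{new4.0}(2) at the corners of each chain. Once this foundational lemma is established, the rest of the theorem follows by the case-by-case verification sketched above, with Lemma \ref{new4.5} organizing the reductions among $R^I_J$ for various $I, J$.
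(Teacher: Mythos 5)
Your plan reproduces the overall shape of the paper's argument but leaves its technical core unfilled, and you yourself flag that core as ``the main obstacle'' without resolving it.

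For the ``if'' direction your sketch is essentially what the paper does: exhibit two or three explicit vectors of $\Lambda$ whose entries cover the variables. But be careful---the lift from the proof of Theorem \ref{main} does not go through unmodified in every case. For instance, in the paper's table for Theorem \ref{nonAGcase} with $|I|=1$ and $i=n$, the second vector already has its last entry replaced by $x_n + y_n$, and the third vector is of a wrap-around form $\begin{pmatrix} x_n & x_1^{m_1} & x_1^{m_1-1}x_2 & \cdots & x_1^{m_1-1} x_{n-2}\end{pmatrix}$; so the vectors must be adjusted case by case, not merely ``lifted.'' That is a checkable detail, not a real gap.

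The genuine gap is in the ``only if'' direction. You propose to mimic Lemma \ref{key}/Corollary \ref{key_cor}: if some generator belongs to the trace, Proposition \ref{graded} supplies $\Lambda$-vectors, and chain-chasing through $v_{\nint{r+1}}f_j = u_r f_{j+1}$ forces a contradiction. But the paper's proof does not get by with a single ``perturbed analog of Lemma \ref{key}''; it requires Propositions \ref{propp>0}, \ref{propq>0}, \ref{prop2p>0}, \ref{prop2q>0}, which for each possible initial element ($v_i$, $u_j$, or $x_i$) pin down \emph{which end} of $\mathbf{f}$ it must occupy, which indices are thereby forced out of $I\cup J$, and inequalities among $m_\alpha$ and $\ell_\alpha$, all propagated inductively along the matrix columns under the numerical hypotheses $m_\alpha < 2\ell_\alpha$ or $\ell_\alpha < 2m_\alpha$. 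These side conditions and the resulting exclusion of indices are exactly what the proof of Theorem \ref{nonAGcase} uses (e.g., $\nint{i-1}\notin I$ from Proposition \ref{propp>0}(1) kills the $\{1,n\}$ pair in part (3)); nothing in your sketch produces them. Moreover, the appeal to a contradiction with ``the minimality of the generators of $I_H$'' is an artifact of the one-dimensional monomial case: once $I$ or $J$ is nonempty, the defining ideal is no longer $I_H$ and that argument must be replaced entirely by regular-sequence cancellations (Remark \ref{new4.0}) plus the unit-detection Remark \ref{new4.0}(2), a replacement you name but do not carry out.

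There is a second, related gap: when $i\in I$, the graded piece $[R^I_J]_{m_i a_i}$ has dimension $2$ (spanned by $x_i^{m_i}$ and $y_i$), so Corollary \ref{graded dim1} alone does not produce a $\Lambda$-vector containing $y_i$ or $v_i$. The paper proves Lemma \ref{lambda} precisely to settle which linear combinations can occur as entries of $\Lambda$-vectors (and to derive $I\cap J=\emptyset$ from that analysis). Your proposal notes that the entries ``jointly span the graded piece'' but does not engage with the mixing argument of Lemma \ref{lambda}, which is essential before the chain-chasing can even begin. Until you supply analogues of Lemma \ref{lambda} and the four structural propositions, the ``only if'' direction is not proved.
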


The ``if'' parts can be proved by direct computation. Here, let us show Theorem \ref{AGcase} (1). We may assume $i=1$. Consider
\begin{eqnarray*}
\mathbf{f}_1 &=& \begin{pmatrix}
	x_1 & x_2 & \cdots &x_{n-1}
\end{pmatrix}, \\
\mathbf{f}_2 &=& \begin{pmatrix}
 	x_2 x_{n-1}^{\ell_{n-1}-1} & x_3 x_{n-1}^{\ell_{n-1}-1} & \cdots &  x_{n-2} x_{n-1}^{\ell_{n-1}-1} & x_{n-1}^{\ell_{n-1}} &  x_n 
\end{pmatrix}, \\
\mathbf{f}_3 &=& \begin{pmatrix}
	x_3 x_{n-1}^{\ell_{n-1}-1} x_n^{\ell_n -1} & x_4 x_{n-1}^{\ell_{n-1}-1} x_n^{\ell_n -1} & \cdots & x_{n-1}^{\ell_{n-1}}x_n^{\ell_n -1} &  x_n^{\ell_n} & x_1 + y_1
\end{pmatrix}
\end{eqnarray*}
in $\Mat_{1 \times n-1}(R^{\{1\}})$.
Then, direct computation shows $\mathbf{f}_i{\cdot}N = 0$ for $i=1,2,3$, hence $x_1, x_2,\ldots , x_n, y_1 \in \trace_{R^{\{1\}}}(\canon_{R^{\{1\}}})$. Therefore, $R^{\{1\}}$ is nearly Gorenstein.
The same computations imply the ``if'' parts of Theorem \ref{AGcase} (2), (3), (4) and Theorem \ref{nonAGcase} (1), (2), (4) by using the following $\mathbf{f}_i$'s:
	
	\medskip
	
	\noindent
\begin{tabular}{|l|l|}
	\hline
	Theorem \ref{AGcase} (2) $i=1$ & $\mathbf{f}_1 = \begin{pmatrix}
 	x_1^{\ell_1}+z_1 & x_2 & x_3 & \cdots & x_{n-1}
 \end{pmatrix}
$\\
	& $\mathbf{f}_2 = \begin{pmatrix}
 	x_2x_{n-1}^{\ell_{n-1}-1} & x_3 x_{n-1}^{\ell_{n-1}-1} & \cdots & x_{n-1}^{\ell_{n-1}} & x_n
 \end{pmatrix}
$\\
	& $\mathbf{f}_3 = \begin{pmatrix}
 	x_3x_{n-1}^{\ell_{n-1}-1}x_n^{\ell_n -1}&x_4x_{n-1}^{\ell_{n-1}-1}x_n^{\ell_n -1} & \cdots & x_{n-1}^{\ell_{n-1}}x_n^{\ell_n -1}  & x_n^{\ell_n} & x_1
 \end{pmatrix}
$\\
	\hline
	Theorem \ref{AGcase} (3) $(i,j)=(1,3)$ & $\mathbf{f}_1 = \begin{pmatrix}
 	x_1^{\ell_1}+z_1 & x_2 & x_3
 \end{pmatrix}
$, $\mathbf{f}_2 = \begin{pmatrix}
 x_3^{\ell_3}+z_3 & x_4 & x_1	
 \end{pmatrix}$
\\
	\hline
	Theorem \ref{AGcase} (4) $(i,j)=(1,3)$ & $\mathbf{f}_1 = \begin{pmatrix}
 	x_1 & x_2 & x_3
 \end{pmatrix}
$, $\mathbf{f}_2 = \begin{pmatrix}
 x_3^{\ell_3}+z_3 & x_4 & x_1+y_1	
 \end{pmatrix}$
\\
	\hline
	Theorem \ref{nonAGcase} (1) $i=1$ & $\mathbf{f}_1 = \begin{pmatrix}
 	x_1 & x_2 & \cdots & x_{n-1}
 \end{pmatrix}
$\\
	& $\mathbf{f}_2 = \begin{pmatrix}
 	x_2x_{n-1}^{\ell_{n-1}-1} & x_3 x_{n-1}^{\ell_{n-1}-1} & \cdots & x_{n-1}^{\ell_{n-1}} & x_n
 \end{pmatrix}
$\\
	& $\mathbf{f}_3 = \begin{pmatrix}
 	x_3x_{n-1}^{\ell_{n-1}-1}x_n^{\ell_n -1}&x_4x_{n-1}^{\ell_{n-1}-1}x_n^{\ell_n -1} & \cdots & x_{n-1}^{\ell_{n-1}}x_n^{\ell_n -1}  & x_n^{\ell_n} & x_1^{m_1}+y_1
 \end{pmatrix}
$\\
	\hline
	Theorem \ref{nonAGcase} (1) $i=n$ & $\mathbf{f}_1 = \begin{pmatrix}
 	x_1 & x_2 & \cdots & x_{n-1}
 \end{pmatrix}
$\\
	& $\mathbf{f}_2 = \begin{pmatrix}
 	x_2x_{n-1}^{\ell_{n-1}-1} & x_3 x_{n-1}^{\ell_{n-1}-1} & \cdots & x_{n-1}^{\ell_{n-1}} & x_n + y_n
 \end{pmatrix}
$\\
	& $\mathbf{f}_3 = \begin{pmatrix}
 	x_n & x_1^{m_1} & x_1^{m_1-1}x_2 & \cdots & x_1^{m_1-1} x_{n-2}
 \end{pmatrix}
$\\
	\hline
	Theorem \ref{nonAGcase} (2) $i=n$ & $\mathbf{f}_1 = \begin{pmatrix}
 	x_1 & x_2 & \cdots & x_{n-1}
 \end{pmatrix}
$\\
	& $\mathbf{f}_2 = \begin{pmatrix}
 	x_2x_{n-1}^{\ell_{n-1}-1} & x_3 x_{n-1}^{\ell_{n-1}-1} & \cdots & x_{n-1}^{\ell_{n-1}} & x_n
 \end{pmatrix}
$\\
	& $\mathbf{f}_3 = \begin{pmatrix}
 	x_n^{\ell_n} + z_n & x_1^{m_1} & x_1^{m_1-1}x_2 & \cdots & x_1^{m_1-1} x_{n-2}
 \end{pmatrix}
$\\
	\hline
	Theorem \ref{nonAGcase} (2) $i=n-1$ & $\mathbf{f}_1 = \begin{pmatrix}
 	x_1 & x_2 & \cdots & x_{n-1}
 \end{pmatrix}
$\\
	& $\mathbf{f}_2 = \begin{pmatrix}
	x_{n-1}^{\ell_{n-1}} + z_{n-1} & x_n & x_1^{m_1} & x_1^{m_1 -1}x_2 & \cdots & x_1^{m_1-1} x_{n-3}
\end{pmatrix}
$\\
	\hline
	Theorem \ref{nonAGcase} (4) & $\mathbf{f}_1 = \begin{pmatrix}
 	x_1 & x_2 & x_3
 \end{pmatrix}
$, $\mathbf{f}_2 = \begin{pmatrix}
 	x_3^{\ell_3} + z_3 & x_4 & x_1^{m_1} + y_1
 \end{pmatrix}$
\\
	\hline
\end{tabular}

\subsection{Proof of Theorems \ref{AGcase} and \ref{nonAGcase}}\label{subsection4.4}

Throughout this subsection, suppose that $n \ge 4$.

Let $$B^I_J = k[\{X_i\}_{1\le i \le n}, \{Y_{i_\alpha}\}_{1\le \alpha \le p}, \{Z_{j_\beta}\}_{1\le \beta \le q}]$$
 be the polynomial ring over $k$ with $n+p+q$ indeterminates.

We regard $B^I_J$ as a graded ring by $[B^I_J]_0 = k$, $\deg X_i = a_i$, $\deg Y_{i_\alpha} = m_{i_\alpha} a_{i_\alpha}$, and $\deg Z_{j_\beta} = \ell_{j_\beta} a_{j_\beta}$ for $1 \le i \le n$, $1\le \alpha \le p$, and $1 \le \beta \le q$.
With this grading, $J^I_J =\detid_2(D^I_J)$ becomes a graded ideal of $B^I_J$. Consequently, the quotient ring $A^I_J = B^I_J / J^I_J$ is also graded.
Since $R^I_J = \widehat{[A^I_J]_\fkM}$ where $\fkM$ denotes the graded maximal ideal of $A^I_J$ and $\widehat{*}$ denotes the $\fkM[A^I_J]_\fkM$-adic completion, by the same way as the argument after Proposition \ref{3.1}, we can use the graded information to analyze the nearly Gorenstein property on $R^I_J$.
In what follows, we say that an element $\varphi \in R^I_J$ is homogeneous if $\varphi$ is an image of some homogeneous element in $A^I_J$.

We put $$\Lambda = \{\mathbf{f} = \begin{pmatrix}f_1 & f_2 & \cdots &f_{n-1} \end{pmatrix} \in \Mat_{1 \times n-1}(R^I_J) \mid \mathbf{f}{\cdot}N = 0 \text{ in $\Mat_{1 \times n(n-2)}(R^I_J)$}\}.$$
For $x\in R$ and $\mathbf{f} = \begin{pmatrix}f_1 & f_2 & \cdots &f_{n-1} \end{pmatrix} \in \Lambda$, $x\in \mathbf{f}$ denotes $x=f_r$ for some $1 \le r \le n-1$. 
We begin with the following.

\begin{Lemma}\label{lem1}
We have the following.
\begin{enumerate}[{\rm (1)}]
	\item Let $1 \le i \le n$ be an integer and $\varphi \in R^I_J$ a homogeneous element.
	If $\varphi v_{\nint{i+1}} \in (u_i)$, then $\varphi \in (u_1, u_2,\ldots , u_n)$.
	\item Let $1 \le j \le n$ be an integer and $\varphi \in R^I_J$ a homogeneous element.
	If $\varphi u_{i} \in (v_{\nint{i+1}})$, then $\varphi \in (v_1, v_2,\ldots , v_n)$.
	\item Let $1 \le i \le n$ such that $i \notin J$, and $\varphi \in R^I_J$ a homogeneous element.
If $\varphi v_{[i+1]} \in (x_i)$, then $\varphi \in (\{u_1, u_2,\ldots , u_n\}\setminus\{u_i\}) + (x_i)$.
	\item Let $1 \le i \le n$ such that $[i+1] \notin I$, and $\varphi \in R^I_J$ a homogeneous element.
If $\varphi u_{i} \in (x_{[i+1]})$, then $\varphi \in (\{v_1, v_2, \ldots , v_n\} \setminus \{v_{[i+1]}\}) + (x_{[i+1]})$.
\end{enumerate}
\end{Lemma}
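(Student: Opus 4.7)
I will prove part (1); parts (2)--(4) follow by symmetric or parallel arguments. The strategy is to lift the relation $\varphi v_{\nint{i+1}} \in (u_i)$ to the polynomial ring $S^I_J$, isolate the contribution from column $i$ of the defining matrix $D^I_J$, and then reduce to a regular-sequence check in a smaller determinantal ring. Concretely, choose a homogeneous lift $\Phi \in S^I_J$ of $\varphi$ and write
\[
\Phi V_{\nint{i+1}} = \Psi U_i + \sum_{a<b} h_{ab}\, g_{ab}, \qquad g_{ab} := V_{\nint{a+1}} U_b - V_{\nint{b+1}} U_a.
\]
Among these, only the generators with $\{a,b\} \ni i$ contain the factors $V_{\nint{i+1}}$ or $U_i$; collecting their contributions, one modifies $\Phi$ by an element $\Phi'' \in (U_j : j \ne i) \subseteq (U_1,\ldots,U_n)$ so that $\Phi' := \Phi - \Phi''$ satisfies $\Phi' V_{\nint{i+1}} - \Psi' U_i \in \detid_2(D')$, where $D'$ is the $2\times(n-1)$ submatrix of $D^I_J$ with column $i$ removed. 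Since $\Phi - \Phi' \in (U_1,\ldots,U_n)$, it suffices to prove $\Phi' \in (U_i) + \detid_2(D')$ in $S^I_J$.

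Passing to the quotient $\tilde R := S^I_J/\detid_2(D')$, the problem becomes: if $\bar{\Phi'} \cdot v_{\nint{i+1}} \in (u_i)\tilde R$, then $\bar{\Phi'} \in (u_i) \tilde R$. Equivalently, $v_{\nint{i+1}}$ is a non-zero-divisor on $\tilde R/(u_i)$, or $(u_i, v_{\nint{i+1}})$ forms a regular sequence on $\tilde R$. By the Eagon--Northcott theorem applied to the $2 \times (n-1)$ matrix $D'$, the determinantal ideal $\detid_2(D')$ has the expected grade $n-2$, so $\tilde R$ is Cohen--Macaulay; an argument analogous to Remark~\ref{new4.0}(1) shows that $\tilde R$ is a domain. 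Hence $u_i$ is a non-zero-divisor on $\tilde R$, and the task reduces to checking that $(u_i, v_{\nint{i+1}})$ has height $2$ in $\tilde R$.

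This height computation is the main technical step. Since neither $V_{\nint{i+1}}$ nor $U_i$ appears in any entry of $D'$, one computes $\dim \tilde R/(u_i, v_{\nint{i+1}})$ directly through a case analysis on whether $i \in J$ and whether $\nint{i+1} \in I$: each of the relations $U_i = 0$ and $V_{\nint{i+1}} = 0$ either eliminates a $Z$ or $Y$ variable or imposes an $X$-power vanishing, and in each combination the total drop in dimension is exactly two. The delicate point will be confirming that the variables $X_i$ and $X_{\nint{i+1}}$ (which still occur inside $D'$ via $V_i$ and $U_{\nint{i+1}}$) do not cause an unexpected further dimension drop. Once this is in place, $v_{\nint{i+1}}$ is a non-zero-divisor on $\tilde R/(u_i)$, so $\bar{\Phi'} \cdot \bar{v}_{\nint{i+1}} = 0$ forces $\bar{\Phi'} = 0$, giving $\varphi \in (u_1,\ldots,u_n)$ in $R^I_J$. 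Part (2) follows from the symmetry interchanging the two rows of $D^I_J$, which swaps $u_i \leftrightarrow v_{\nint{i+1}}$ and $(u_1,\ldots,u_n) \leftrightarrow (v_1,\ldots,v_n)$. For parts (3) and (4), the same reduction to $\tilde R$ applies, with the analogous regular-sequence check carried out for the pair $(x_i, v_{\nint{i+1}})$ (resp.\ $(x_{\nint{i+1}}, u_i)$); the hypotheses $i \notin J$ (resp.\ $\nint{i+1} \notin I$) ensure that $u_i \in (x_i)$ (resp.\ $v_{\nint{i+1}} \in (x_{\nint{i+1}})$), matching the stated conclusion involving $(x_i)$ (resp.\ $(x_{\nint{i+1}})$).
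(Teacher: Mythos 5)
Your overall route coincides with the paper's: lift $\varphi$ to $S^I_J$, absorb the minors passing through column $i$ into a correction term lying in $(U_j \mid j \ne i)$, and reduce the question modulo $\detid_2(D')$, where $D'$ is $D^I_J$ with column $i$ deleted. The genuine gap is at precisely the step you yourself label ``the main technical step'': you never verify that $(u_i, v_{\nint{i+1}})$ has height $2$ in $\tilde R = S^I_J/\detid_2(D')$ (equivalently, by Cohen--Macaulayness, that it is a $\tilde R$-regular sequence); you only announce a case analysis and say the ``delicate point will be confirming'' that $X_i$ and $X_{\nint{i+1}}$ cause no extra dimension drop. Since the entire conclusion rests on this verification, the proof is incomplete as written. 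The claim is in fact true and can be settled by showing that $x_i, x_{\nint{i+1}}$ together with all the $y$'s and $z$'s form a system of parameters of $\tilde R$ (killing these elements in $D'$ makes every remaining $x_j$ nilpotent), which simultaneously shows that $\detid_2(D')$ has the maximal height $n-2$, that $\tilde R$ is Cohen--Macaulay by Eagon--Northcott, and that any two of these parameters, hence also $u_i, v_{\nint{i+1}}$ in each of your cases, form a regular sequence; but this is exactly what must be written down. For comparison, the paper shortcuts part of your case analysis: it first treats the case $\nint{i+1}\in I$ or $i\in J$ directly in $R^I_J$, where $u_i, v_{\nint{i+1}}$ is an $R^I_J$-regular sequence and one gets the stronger conclusion $\varphi\in(u_i)$, so that in the remaining case $U_i$ and $V_{\nint{i+1}}$ are pure powers of $X_i$ and $X_{\nint{i+1}}$ and the only statement needed is that $X_i, X_{\nint{i+1}}$ is a regular sequence on $S^I_J/\detid_2(D')$.

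A second flaw is your assertion that $\tilde R$ is a domain ``by an argument analogous to Remark \ref{new4.0}(1)''. That argument does not transfer: after killing the regular sequence of $y$'s and $z$'s, the quotient of $\tilde R$ is not a numerical semigroup ring but a two-dimensional ring defined by the $2$-minors of a $2\times(n-1)$ matrix of pure powers of $n$ variables, and there is no reason for this binomial ideal to be prime in general (already a single minor of the shape $X_a^{m_a+\ell_a}-X_{a-1}^{\ell_{a-1}}X_{a+1}^{m_{a+1}}$ can factor, and the factors need not lie in the ideal). Fortunately the domain property is not needed: Cohen--Macaulayness of $\tilde R$ plus the height computation already yields the regular sequence, and this is also why the paper never claims $S^I_J/\detid_2(D')$ is a domain. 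So delete the domain claim and supply the system-of-parameters computation; with those repairs your argument, including the parallel treatment of (2)--(4), goes through.
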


\begin{proof}
	We only prove (1) since (2), (3), and (4) can be proved in the same manner as (1). 
	Suppose $\varphi v_{[i+1]} \in (u_i)$. We may assume $i=1$. If $2\in I$ or $1\in J$, then $u_1$, $v_2$ is an $R^I_J$-regular sequence. It follows that $\varphi\in (u_1)$. Thus we may assume $2\notin I$ and $1\notin J$.  
	We choose $f \in S^I_J$ such that $\varphi = \overline{f}$ in $R^I_J$. Then, 
	\begin{eqnarray*}
	fV_2 &\in& (U_1) + \detid_2\begin{pmatrix}V_2 & V_3 & \cdots & V_n & V_1\\0 & U_2 & \cdots & U_{n-1} & U_n\end{pmatrix}\\
	&=&	(U_1) + V_2 (U_2,U_3,\ldots ,U_n) + \detid_2(D'),
	\end{eqnarray*}
	where $D'=\begin{pmatrix}
		V_3 & \cdots & V_n & V_1\\
		U_2 & \cdots & U_{n-1} & U_n
	\end{pmatrix}$. Therefore, there exists $g \in (U_2, \ldots , U_n)$ such that $(f-g)V_2 \in (U_1)+\detid_2(D')$. 
	We note that $X_1, X_2$ is an $S^I_J/\detid_2(D')$-regular sequence. 
	Therefore, since $U_1=X_1^{m_1}$ and $V_2=X_2^{\ell_2}$ by $2\notin I$ and $1\notin J$, we have $f-g\in (U_1)+\detid_2(D')$. It follows that $\varphi = \overline{f} \in (u_1, u_2,\ldots , u_n)$.
\end{proof}

\begin{Lemma}\label{lem2}
	We have the following.
	\begin{enumerate}[{\rm (1)}]
		\item 	Suppose $p > 0$. Let $i \in I$ and $\varphi \in R^I_J \setminus (u_1,u_2,\ldots , u_n)$ a homogeneous element.  
	If there exists $\mathbf{f} = \begin{pmatrix}f_1 & f_2 & \cdots &f_{n-1}\end{pmatrix} \in \Lambda$ such that $v_i \varphi \in \mathbf{f}$, then $f_{n-1} = v_i \varphi$.
		\item Suppose $q > 0$. Let $j \in J$ and $\varphi \in R^I_J \setminus (v_1,v_2,\ldots , v_n)$ a homogeneous element.  
	If there exists $\mathbf{f} = \begin{pmatrix}f_1 & f_2 & \cdots &f_{n-1}\end{pmatrix} \in \Lambda$ such that $u_j \varphi \in \mathbf{f}$, then $f_{1} = u_j \varphi$.
	\end{enumerate}
\end{Lemma}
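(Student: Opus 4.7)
The plan is to prove both parts by a single-step argument, which I describe in detail for part (1); part (2) is symmetric. Assume toward contradiction that $f_p = v_i\varphi$ with $1 \le p \le n-2$. Reading off the column of $N$ indexed by $(p-1)n + [i-1]$, the relation $\mathbf{f}\cdot N = 0$ gives
$$v_i f_p - u_{[i-1]} f_{p+1} = 0,$$
so $v_i^2 \varphi = u_{[i-1]} f_{p+1} \in (u_{[i-1]})$. The hypothesis $i \in I$ places us in the case treated inside the proof of Lemma \ref{lem1}(1), where it is shown that the pair $u_{[i-1]}, v_i$ is an $R^I_J$-regular sequence; in particular $v_i$ is a non-zero-divisor on $R^I_J/(u_{[i-1]})$. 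Applying this fact to $v_i(v_i\varphi) \in (u_{[i-1]})$ first gives $v_i\varphi \in (u_{[i-1]})$, and then $\varphi \in (u_{[i-1]}) \subseteq (u_1,u_2,\ldots,u_n)$, contradicting the hypothesis on $\varphi$. Hence $p = n-1$.

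For part (2), suppose $f_p = u_j\varphi$ with $p \ge 2$. The column of $N$ indexed by $(p-2)n + j$ yields
$$v_{[j+1]} f_{p-1} - u_j f_p = 0,$$
whence $u_j^2 \varphi = v_{[j+1]} f_{p-1} \in (v_{[j+1]})$. Since $j \in J$, the proof of Lemma \ref{lem1}(1) again supplies that $u_j, v_{[j+1]}$ is $R^I_J$-regular; because $R^I_J$ is a Noetherian local ring, any permutation of a regular sequence is regular, so $v_{[j+1]}, u_j$ is also regular. Thus $u_j$ is a non-zero-divisor modulo $(v_{[j+1]})$, and the same two-step cancellation applied to $u_j(u_j\varphi) \in (v_{[j+1]})$ produces $\varphi \in (v_{[j+1]}) \subseteq (v_1,\ldots,v_n)$, contradicting the hypothesis on $\varphi$. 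Hence $p = 1$.

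The one subtle point, and the main obstacle, is that the statement of Lemma \ref{lem1}(1) is not strong enough: applied directly to $v_i^2 \varphi \in (u_{[i-1]})$ it would yield only $v_i\varphi \in (u_1,\ldots,u_n)$, not $\varphi$ itself. What is actually needed is the sharper observation, extracted from the proof of Lemma \ref{lem1}(1), that under $i \in I$ (resp.\ $j \in J$) the specific pair $u_{[i-1]}, v_i$ (resp.\ $u_j, v_{[j+1]}$) is a genuine $R^I_J$-regular sequence, which enables the two successive cancellations. Once that ingredient is identified, everything reduces to reading off a single column of the Eagon--Northcott matrix $N$ and invoking domain/regular-sequence manipulations in $R^I_J$.
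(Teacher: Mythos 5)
Your proof is correct, but it follows a genuinely different route than the paper's. The paper (setting $i=n$) reads off the column of $N$ giving $f_\alpha v_{[i+1]} - f_{\alpha+1} u_i = 0$, so $v_i v_{[i+1]}\varphi \in (u_i)$; it then cancels $v_i$ once using the regular sequence $v_i, u_i$ (justified via Remark \ref{new4.0}(1)) to obtain $v_{[i+1]}\varphi \in (u_i)$, and finishes by invoking Lemma \ref{lem1}(1) as a black box to conclude $\varphi \in (u_1,\dots,u_n)$. You instead read off the \emph{other} column touching $v_i$, namely $f_p v_i - f_{p+1} u_{[i-1]} = 0$, which yields $v_i^2\varphi \in (u_{[i-1]})$; you then cancel $v_i$ twice against $u_{[i-1]}$, landing on the stronger conclusion $\varphi \in (u_{[i-1]})$ and bypassing Lemma \ref{lem1} entirely as a stated result. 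The trade-off is that your argument is not modular: the regular-sequence fact you need (that $u_{[i-1]}, v_i$ is regular when $i \in I$) is not stated as a lemma anywhere and has to be excavated from inside the proof of Lemma \ref{lem1}, whereas the paper's version cites the remark and the lemma by name. Both are valid; yours is a cleaner one-line chain of cancellations, the paper's is cleaner to reference. You are right to flag the subtle point that Lemma \ref{lem1}(1) as stated would only give $v_i\varphi \in (u_1,\dots,u_n)$ from $v_i^2\varphi \in (u_{[i-1]})$; the paper sidesteps this by choosing the other column, where a single cancellation followed by Lemma \ref{lem1}(1) suffices.
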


\begin{proof}
	We will focus only on the proof of (1) since the proof of (2) follows from the same procedure as that of (1). 
	We may assume $i=n$. Let $1 \le \alpha \le n-1$ such that $f_\alpha = v_n \varphi$.
	Suppose that $\alpha < n-1$. Then we have
	$$
	f_\alpha v_1 - f_{\alpha +1} u_n = v_n \varphi v_1 - f_{\alpha+1} u_n = 0.
	$$
	Since $v_n = x_n^{m_n} + y_n, u_n$ forms an $R^I_J$-regular sequence (Remark \ref{new4.0} (1)), $\varphi v_1 \in (u_n)$. This is a contradiction because of Lemma \ref{lem1}.
	Hence $\alpha =n-1$.
\end{proof}

\begin{Proposition}\label{propp>0}
	Suppose $p >0$. Let $i \in I$.
	If there exists $\mathbf{f} = \begin{pmatrix}f_1 & f_2 & \cdots &f_{n-1}\end{pmatrix} \in \Lambda$ such that $v_i = x_i^{m_i}+y_i \in \mathbf{f}$, then we have the following.
	\begin{enumerate}[{\rm (1)}]
		\item $\nint{i-1} \notin I \cup J$.
		\item $m_{\nint{i-1}} \le \ell_{\nint{i-1}}$.
		\item $f_{n-1} = x_i^{m_i} + y_i$, $f_{n-2} = u_{\nint{i-1}}=x_{\nint{i-1}}^{\ell_{\nint{i-1}}}$, and $f_{n-3} = u_{\nint{i-2}} x_{\nint{i-1}}^{\ell_{\nint{i-1}} - m_{\nint{i-1}}}$.
	\end{enumerate}
	If we assume $n \ge 5$ and $\ell_\alpha < 2m_\alpha$ for all $\alpha \in \{\nint{i-1}, \nint{i-2}, \ldots , \nint{i-r}\}$ $(1 \le r \le n-4)$, then we further have the following.
	
	\begin{enumerate}[{\rm (1)}]\setcounter{enumi}{3}
		\item $\alpha \notin I\cup J$ for all $\alpha \in \{\nint{i-2}, \nint{i-3},\ldots , \nint{i-r-1}\}$.
		\item $m_\alpha \le \ell_\alpha$ for all $\alpha \in \{\nint{i-2}, \nint{i-3},\ldots , \nint{i-r-1}\}$.
		\item $f_k = u_{\nint{i-n+r+1}}{\cdot}\prod_{\alpha =1}^{n-r-2} x_{\nint{i-\alpha}}^{\ell_{\nint{i-\alpha}} - m_{\nint{i-\alpha}}}$ for all $n-3-r \le k \le n-4$.
	\end{enumerate}
\end{Proposition}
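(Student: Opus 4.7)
The plan is to determine the entries $f_{n-1}, f_{n-2}, f_{n-3}$ of $\mathbf{f}$ one at a time from the top index downward, using the column relations $v_{\nint{k+1}} f_j = u_k f_{j+1}$ (for $1 \le k \le n$, $1 \le j \le n-2$) that define membership in $\Lambda$, together with Remark \ref{new4.0}(1) (the domain and regular-sequence properties of $R^I_J$) and Lemma \ref{lem2}. First I would apply Lemma \ref{lem2}(1) with $\varphi = 1$ (valid since $(u_1, \ldots, u_n)$ lies in the maximal ideal, so $1 \notin (u_1, \ldots, u_n)$) to conclude $f_{n-1} = v_i$. Next, the column relation at $(j,k) = (n-2, \nint{i-1})$ reads $v_i f_{n-2} = u_{\nint{i-1}} f_{n-1} = u_{\nint{i-1}} v_i$, and cancelling the non-zero-divisor $v_i$ in the domain $R^I_J$ yields $f_{n-2} = u_{\nint{i-1}}$.

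For $f_{n-3}$, the column relation at $(j,k) = (n-3, \nint{i-2})$ produces
\[
v_{\nint{i-1}} f_{n-3} \;=\; u_{\nint{i-2}} f_{n-2} \;=\; u_{\nint{i-1}} u_{\nint{i-2}}. \quad (\star)
\]
Note that $(\star)$ and the parallel relation $v_i f_{n-3} = u_{\nint{i-1}}^2$ (from $(n-3, \nint{i-1})$) pin $f_{n-3}$ down uniquely in the fraction field of $R^I_J$, consistently via the minor $v_{\nint{i-1}} u_{\nint{i-1}} = v_i u_{\nint{i-2}}$. The task for (1) is to show that this common value lies in $R^I_J$ only when $\nint{i-1} \notin I \cup J$. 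I would argue by contradiction: if $\nint{i-1} \in I$ so that $v_{\nint{i-1}} = x_{\nint{i-1}}^{m_{\nint{i-1}}} + y_{\nint{i-1}}$, the extra regular element $y_{\nint{i-1}}$ (Remark \ref{new4.0}(1)) cannot be compatibly absorbed into a divisor of $u_{\nint{i-1}} u_{\nint{i-2}}$; reducing $(\star)$ modulo the regular sequence $\{y_{i_\alpha}, z_{j_\beta}\}$ with $y_{\nint{i-1}}$ retained and filtering by powers of $y_{\nint{i-1}}$ against the minor obstruction forces a contradiction. The case $\nint{i-1} \in J$ is symmetric via $z_{\nint{i-1}}$. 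Once (1) is established, $(\star)$ simplifies to $x_{\nint{i-1}}^{m_{\nint{i-1}}} f_{n-3} = x_{\nint{i-1}}^{\ell_{\nint{i-1}}} u_{\nint{i-2}}$; since $x_{\nint{i-1}}$ is a non-zero-divisor, the existence of $f_{n-3} \in R^I_J$ forces $m_{\nint{i-1}} \le \ell_{\nint{i-1}}$, proving (2), and cancellation then yields $f_{n-3} = u_{\nint{i-2}} x_{\nint{i-1}}^{\ell_{\nint{i-1}} - m_{\nint{i-1}}}$.

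For assertions (4)--(6) under the extra hypothesis $n \ge 5$ and $\ell_\alpha < 2 m_\alpha$ for $\alpha \in \{\nint{i-1}, \ldots, \nint{i-r}\}$, I would iterate the scheme by descending induction: having determined $f_{n-1}, \ldots, f_{n-2-s}$ in the predicted form, the next column relation yields an equation of exactly the shape $(\star)$ with $\nint{i-s-1}$ in place of $\nint{i-1}$, so the same obstruction analysis rules out $\nint{i-s-1} \in I \cup J$ and forces $m_{\nint{i-s-1}} \le \ell_{\nint{i-s-1}}$. The numerical bound $\ell_\alpha < 2 m_\alpha$ is exactly what prevents a higher power of $x_\alpha$ from absorbing a $u$-factor and short-circuiting the descent. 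The main obstacle will be establishing (1): the interplay of the regular sequence $\{y_{i_\alpha}, z_{j_\beta}\}$, the graded decomposition of $A^I_J$, and the minor relations has to be handled carefully, and identifying precisely which homogeneous pieces of $f_{n-3}$ can or cannot absorb an extra $y_{\nint{i-1}}$-correction is the technically most delicate part of the argument.
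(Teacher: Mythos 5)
Your opening moves are the same as the paper's: Lemma \ref{lem2}(1) with $\varphi=1$ gives $f_{n-1}=v_i$, and cancelling $v_i$ in the relation $f_{n-2}v_i=u_{\nint{i-1}}v_i$ (domain) gives $f_{n-2}=u_{\nint{i-1}}$. The problem is everything after that.

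For (1), you offer only a sketch ("filtering by powers of $y_{\nint{i-1}}$ against the minor obstruction") and you explicitly flag it as "the technically most delicate part." You have not actually ruled out $\nint{i-1}\in I\cup J$. The paper does this with two specific, clean devices you don't mention: to exclude $\nint{i-1}\in J$, it applies Lemma \ref{lem2}(2) to $f_{n-2}=u_{\nint{i-1}}\cdot 1$ to force $f_1=f_{n-2}$ and then invokes Remark \ref{new2.25} (the ideal $(f_1,\ldots,f_{n-1})$ is a minimal generating set of a module isomorphic to $\canon_{R^I_J}$, so two distinct coordinates cannot coincide when $n\ge 4$); to exclude $\nint{i-1}\in I$, it notes that $v_{\nint{i-1}},u_{\nint{i-1}}=x_{\nint{i-1}}^{\ell_{\nint{i-1}}}$ is a regular sequence, concludes $u_{\nint{i-2}}\in(v_{\nint{i-1}})$ from $f_{n-3}v_{\nint{i-1}}=u_{\nint{i-1}}u_{\nint{i-2}}$, and derives a contradiction from Remark \ref{new4.0}(2). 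Your sketch contains neither ingredient.

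For (2), "the existence of $f_{n-3}\in R^I_J$ forces $m_{\nint{i-1}}\le\ell_{\nint{i-1}}$" is not an argument. If $m>\ell$, cancellation gives $u_{\nint{i-2}}\in(x_{\nint{i-1}})$; that this is impossible is again Remark \ref{new4.0}(2) (since $v_{\nint{i-1}}=x_{\nint{i-1}}^{m_{\nint{i-1}}}\in(x_{\nint{i-1}})$ as well), which you never invoke. For (4)--(6), the claim that "the next column relation yields an equation of exactly the shape $(\star)$" is false: the descent accumulates factors $\prod x_{\nint{i-\alpha}}^{\ell_{\nint{i-\alpha}}-m_{\nint{i-\alpha}}}$, and controlling these is precisely where the hypothesis $\ell_\alpha<2m_\alpha$ is used (to show these factors stay outside $(v_1,\ldots,v_n)$, making Lemma \ref{lem2}(2) applicable) together with the divisibility consequences in Lemma \ref{lem1}(2) and (4), none of which appear in your sketch. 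So (1), (2), and the inductive step all have genuine gaps.
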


\begin{proof}
	We may assume $i=n$. By applying Lemma \ref{lem2} (1) with $\varphi=1$, $f_{n-1} = v_n = x_n^{m_n} + y_n$.
	Then, since $$f_{n-2} v_n - f_{n-1} u_{n-1} = f_{n-2} v_n - v_n u_{n-1} = 0,$$
	we have $f_{n-2} = u_{n-1}$.
	Suppose that $n-1\in J$. Then, by Lemma \ref{lem2} (2), $f_1=u_{n-1}=f_{n-2}$. By Remark \ref{new2.25}, we have $n-2=1$, which is a contradiction to the assumption that $n \ge 4$. Hence $n-1 \notin J$. 
	Assume $n-1 \in I$. 
	Then $$f_{n-3} v_{n-1} - f_{n-2} u_{n-2} = f_{n-3} v_{n-1} - x_{n-1}^{\ell_{n-1}} u_{n-2} = 0.$$
	Hence $u_{n-2} \in (v_{n-1})$ because $v_{n-1}, x_{n-1}^{\ell_{n-1}}$ is an $R^I_J$-regular sequence (Remark \ref{new4.0} (1)). 
	However, this is impossible by Remark \ref{new4.0} (2). Thus, $n-1 \notin I$.
	
	By (1), $v_{n-1} = x_{n-1}^{m_{n-1}}$. It follows that $v_{n-2}\notin (x_{n-1})$ by Remark \ref{new4.0} (2). This implies $m_{n-1} \le \ell_{n-1}$ by assuming the contrary and considering the equality $$f_{n-3} x_{n-1}^{m_{n-1}} - x_{n-1}^{\ell_{n-1}} u_{n-2} = 0.$$
	So, we get $f_{n-3} = u_{n-2} x_{n-1}^{\ell_{n-1} -m_{n-1}}$ and complete proofs of (1), (2), and (3).
	
	Suppose that $n\ge 5$ and $\ell_\alpha < 2m_\alpha$ for all $4 \le \alpha \le n-1$. We then prove that (1), (2), and (3) imply that (4), (5), and (6) hold in the case of $\alpha=i-2 = n-2$. 
	
	Suppose $n-2 \in J$. We note that $f_{n-3} = u_{n-2}x_{n-1}^{\ell_{n-1} -m_{n-1}}$ and $x_{n-1}^{\ell_{n-1} - m_{n-1}} \notin (v_1, \ldots, v_n)$ since $\ell_{n-1} - m_{n-1}<m_{n-1}$. Hence, by Lemma \ref{lem2} (2), $f_{n-3} = u_{n-2}x_{n-1}^{\ell_{n-1} - m_{n-1}} =f_1$. 
	By Remark \ref{new2.25}, this implies $n-3=1$, a contradiction. Thus $n-2\notin J$.

	Suppose that $n-2 \in I$. 
	Then 
	\begin{align}\label{eq3}
	f_{n-4} v_{n-2} - f_{n-3}u_{n-3} = f_{n-4}v_{n-2} - x_{n-2}^{\ell_{n-2}} x_{n-1}^{\ell_{n-1} -m_{n-1}}u_{n-3} = 0
	\end{align}
	by (3). 
	Since $v_{n-2}, x_{n-2}^{\ell_{n-2}}$ is an $R^I_J$-regular sequence, $x_{n-1}^{\ell_{n-1} - m_{n-1}} u_{n-3} \in (v_{n-2})$. 
	However, this contradicts to $x_{n-1}^{\ell_{n-1} - m_{n-1}}\not \in (v_1, \ldots, v_n)$ by applying Lemma \ref{lem1} (2). 
	Hence $n-2 \notin I$. 
	Suppose that $m_{n-2} > \ell_{n-2}$. Then, by the equality \eqref{eq3}, we have $x_{n-1}^{\ell_{n-1} - m_{n-1}} u_{n-3} =f_{n-4} x_{n-2}^{m_{n-2}-\ell_{n-2}} \in (x_{n-2})$.
	Therefore $x_{n-1}^{\ell_{n-1} -m_{n-1}} \in (u_1, u_2,\ldots u_{n-3}, u_{n-1}, u_n) + (x_2)$ by Lemma \ref{lem1} (4). But this is a contradiction because $\ell_{n-1} < 2m_{n-1}$.
	It follows that by \eqref{eq3} that $f_{n-4} = x_{n-2}^{\ell_{n-2} - m_{n-2}} x_{n-1}^{\ell_{n-1} -m_{n-1}}u_{n-3}$. Therefore, assertions (4), (5), and (6) in the case of $\alpha=n-2$ hold. Then we can complete the proof inductively.	
\end{proof}

By arranging the form of the matrix $D^I_J$, the next proposition follows from Proposition \ref{propp>0}.

\begin{Proposition}\label{propq>0}
	Suppose $q >0$. Let $j \in J$.
	If there exists $\mathbf{f} = \begin{pmatrix}f_1 & f_2 & \cdots &f_{n-1}\end{pmatrix} \in \Lambda$ such that $u_j = x_j^{\ell_j}+z_j \in \mathbf{f}$, then we have the following.
	\begin{enumerate}[{\rm (1)}]
		\item $\nint{j+1} \notin I \cup J$.
		\item $\ell_{\nint{j+1}} \le m_{\nint{j+1}}$.
		\item $f_{1} = x_j^{\ell_j} + z_j$, $f_{2} = v_{\nint{j+1}}=x_{\nint{j+1}}^{m_{\nint{j+1}}}$, and $f_{3} = v_{\nint{j+2}} x_{\nint{j+1}}^{m_{\nint{j+1}} - \ell_{\nint{j+1}}}$.
	\end{enumerate}
	If we assume $n \ge 5$ and $m_\alpha < 2\ell_\alpha$ for all $\alpha \in \{\nint{j+1}, \nint{j+2}, \ldots , \nint{j+r}\}$ $(1 \le r \le n-4)$, then we further have the following.
	
	\begin{enumerate}[{\rm (1)}]\setcounter{enumi}{3}
		\item $\alpha \notin I\cup J$ for all $\alpha \in \{\nint{j+2}, \nint{j+3},\ldots , \nint{j+r+1}\}$.
		\item $\ell_\alpha \le m_\alpha$ for all $\alpha \in \{\nint{j+2}, \nint{j+3},\ldots , \nint{j+r+1}\}$.
		\item $f_k = u_{\nint{j+r-1}}{\cdot}\prod_{\alpha =1}^{r-2} x_{\nint{j+\alpha}}^{\ell_{\nint{j+\alpha}} - m_{\nint{j+\alpha}}}$ for all $4 \le k \le r+3$.
	\end{enumerate}
\end{Proposition}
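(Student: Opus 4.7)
The plan is to deduce Proposition \ref{propq>0} from Proposition \ref{propp>0} by exploiting a symmetry of the matrix $D^I_J$. Swapping the two rows of $D^I_J$ negates every $2 \times 2$ minor and hence leaves the defining ideal $\detid_2(D^I_J)$, and therefore the ring $R^I_J$, unchanged. Reversing the order of the columns of the resulting matrix and performing the relabeling $s \mapsto \nint{n+2-s}$ of the indices then yields a matrix of the same ``staircase'' form $D^{\tilde I}_{\tilde J}$, but where the roles of the $V_r$ and $U_r$ have been exchanged, the exponent sequences $\{m_r\}$ and $\{\ell_r\}$ have been interchanged, the subsets $I$ and $J$ have been swapped, and the cyclic shift has reversed direction.

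Under this involution, a row vector $\mathbf{f} = \begin{pmatrix} f_1 & f_2 & \cdots & f_{n-1} \end{pmatrix} \in \Lambda$ corresponds, after reversing its entries to form $\tilde{\mathbf{f}} = \begin{pmatrix} f_{n-1} & f_{n-2} & \cdots & f_1 \end{pmatrix}$, to an element of the $\tilde\Lambda$ attached to the relabeled setup. Consequently the hypothesis $u_j \in \mathbf{f}$ of Proposition \ref{propq>0} translates into the hypothesis $\tilde v_{\tilde i} \in \tilde{\mathbf{f}}$ of Proposition \ref{propp>0}, with $\tilde i \in \tilde I$ determined by the dictionary. Positions near the beginning of $\mathbf{f}$, namely $f_1, f_2, f_3$, correspond to positions near the end of $\tilde{\mathbf{f}}$, namely $\tilde f_{n-1}, \tilde f_{n-2}, \tilde f_{n-3}$.

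Applying Proposition \ref{propp>0} to the relabeled data and translating back produces the conclusions of Proposition \ref{propq>0}. For instance, the conclusion $\nint{\tilde i - 1} \notin \tilde I \cup \tilde J$ translates to $\nint{j+1} \notin I \cup J$ because the reversal of the cyclic shift converts a backward step into a forward step; the inequality $\tilde m_{\nint{\tilde i-1}} \le \tilde \ell_{\nint{\tilde i-1}}$ becomes $\ell_{\nint{j+1}} \le m_{\nint{j+1}}$; and the formulas for $\tilde f_{n-1}, \tilde f_{n-2}, \tilde f_{n-3}$ translate, entry by entry, into the stated formulas for $f_1, f_2, f_3$. Assertions (4)--(6) follow in the same way from the corresponding parts (4)--(6) of Proposition \ref{propp>0}.

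The main technical point, and essentially the only one to execute with care, is the verification of this dictionary: that the row swap together with the column reversal does indeed put the matrix into the form $D^{\tilde I}_{\tilde J}$ with the indicated exchanges of $V$'s and $U$'s, of exponents, and of index sets, and that the condition $\tilde{\mathbf{f}} \cdot N^{\tilde I}_{\tilde J} = 0$ is literally equivalent to $\mathbf{f} \cdot N^I_J = 0$. Once this bookkeeping is settled, Proposition \ref{propq>0} follows at once from Proposition \ref{propp>0} by direct substitution.
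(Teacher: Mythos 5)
Your proposal is correct and is precisely the argument the paper intends: the paper dispatches Proposition \ref{propq>0} with the single sentence ``By arranging the form of the matrix $D^I_J$, the next proposition follows from Proposition \ref{propp>0}.'' Your write-up spells out the row swap, column reversal, index relabeling $s \mapsto \nint{n+2-s}$, and the corresponding reversal of $\mathbf{f}$, which is the bookkeeping the authors leave implicit.
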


%


\begin{Proposition}\label{prop2p>0}
	Let $1\ \le i \le n$. Suppose that $i \in I$ or $m_i \ge 2$. 
	If there exists $\mathbf{f} \in \Lambda$ such that $x_i \in \mathbf{f}$, then we have the following.
	\begin{enumerate}[{\rm (1)}]
		\item $i \notin J$ and $\ell_i = 1$.
		\item $\nint{i+1} \notin I \cup J$ and $\ell_{\nint{i+1}} \le m_{\nint{i+1}}$.
		\item $f_1 = x_i$, $f_2 = x_{\nint{i+1}}^{m_{\nint{i+1}}}$, and $f_3 = v_{\nint{i+2}} x_{\nint{i+1}}^{m_{\nint{i+1}}-\ell_{\nint{i+1}}}$.
	\end{enumerate}
	If we assume $n \ge 5$ and $m_\alpha < 2\ell_\alpha$ for all $\alpha \in \{\nint{i+1}, \nint{i+2},\ldots , \nint{i+r}\}$ $(1 \le r \le n-4)$, then we further have the following.
	\begin{enumerate}[{\rm (1)}] \setcounter{enumi}{3}
		\item $\alpha \notin I \cup J$ for all $\alpha \in \{\nint{i+2}, \nint{i+3},\ldots , \nint{i+r+1}\}$.
		\item $m_{\alpha} \ge \ell_{\alpha}$ for all $\alpha \in \{\nint{i+2}, \nint{i+3},\ldots , \nint{i+r+1}\}$.
		\item $f_k = v_{\nint{i+r-1}} \prod_{\alpha=1}^{r-2} x_{\nint{i+\alpha}}^{m_{\nint{i+\alpha}} - \ell_{\nint{i+\alpha}}}$ for all $4 \le k \le r+3$.
	\end{enumerate}
\end{Proposition}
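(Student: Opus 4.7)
The plan mirrors the propagation arguments of Propositions \ref{propp>0} and \ref{propq>0}, with $x_i$ playing the role that $u_j$ plays in Proposition \ref{propq>0}; the argument will in fact show that $u_i = x_i$ under the hypotheses. After cyclic relabeling assume $i = 1$, and use throughout the basic column syzygy $f_j v_{\nint{i'+1}} = f_{j+1} u_{i'}$ read off from the $((j-1)n + i')$-th column of $N^I_J$.

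To establish $f_1 = x_1$ and conclusion (1), suppose $f_\alpha = x_1$ with $\alpha \ge 2$. The column at index $(\alpha-2)n + n$ gives $f_{\alpha-1} v_1 = x_1 u_n$. If $1 \in I$ then $v_1 = x_1^{m_1} + y_1$, and the regular sequence $\{x_1, y_1\}$ from Remark \ref{new4.0}(1) forces $x_1 \mid f_{\alpha-1}$; cancellation yields $v_1 \mid u_n$, contradicting Remark \ref{new4.0}(2). If instead $1 \notin I$ with $m_1 \ge 2$, then $u_n = f_{\alpha-1} x_1^{m_1-1} \in (x_1)$ and Lemma \ref{lem1}(4) applied to $\varphi = 1$ forces $1 \in \fkm$, a contradiction. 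Hence $f_1 = x_1$. The column-$1$ relation $x_1 v_2 = f_2 u_1$, combined with the analogous dichotomy on $1 \in J$ versus $\ell_1 \ge 2$ (using $\{x_1, z_1\}$ regular and Lemma \ref{lem1}(3) respectively), then forces $1 \notin J$ and $\ell_1 = 1$; cancelling $u_1 = x_1$ gives $f_2 = v_2$.

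For conclusion (2), the column-$2n$ relation $v_1 v_2 = f_3 u_n$ and Lemma \ref{lem1}(1) with $\varphi = v_2$ yield $v_2 \in (u_1, \ldots, u_n)$. Inside the quotient $R^I_J/(u_1, \ldots, u_n) \cong k[[X_1, \ldots, X_n, \{Y_i\}_{i \in I}]]/(X_1^{\ell_1}, \ldots, X_n^{\ell_n})$ (obtained by eliminating each $Z_j$ via $Z_j = -X_j^{\ell_j}$) the image of $v_2$ must vanish; if $2 \in I$ this image retains a nonzero $y_2$ summand, so $2 \notin I$, and then vanishing of $x_2^{m_2}$ gives $\ell_2 \le m_2$. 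To rule out $2 \in J$, iterate the column-$(j-1)n + 1$ relations to get $f_{j+1} x_1^{j-1} = v_2^j = x_2^{jm_2}$ for $1 \le j \le n-2$, so the existence of $f_{n-1} \in R^I_J$ entails $x_2^{(n-2)m_2} \in (x_1^{n-3})$. Projecting further by $(x_1, \{y_r\}_{r \in I}, \{x_k\}_{k \ne 1,2})$ and eliminating each $Z_j$ with $j \in J \setminus \{2\}$ yields a surjection from $R^I_J$ onto $k[[X_2, Z_2]]/(X_2^{m_2}(X_2^{\ell_2} + Z_2))$; there the single relation $X_2^{m_2+\ell_2} \equiv -X_2^{m_2} Z_2$ reduces $X_2^{(n-2)m_2}$ to a nonzero monomial $\pm X_2^a Z_2^b$ with $a < m_2 + \ell_2$, contradicting $x_2^{(n-2)m_2} \in (x_1^{n-3}) \subseteq (x_1)$. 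Hence $2 \notin J$, and conclusion (3) follows by cancellation in $f_3 x_1 = v_2^2 = x_2^{2m_2}$ together with the defining relation $v_3 x_1 = v_2 u_2 = x_2^{m_2+\ell_2}$, giving $f_3 = v_3 x_2^{m_2 - \ell_2}$.

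Conclusions (4)--(6) follow by induction on $r$, with base $r = 1$ supplied by (2)--(3). The inductive step applies the same argument to the index $\nint{i+r}$ in place of $\nint{i+1}$: the column relation $f_{r+2} u_{\nint{i+r}} = f_{r+1} v_{\nint{i+r+1}}$ combined with the inductive formulas for $f_{r+1}, f_{r+2}$ and the hypothesis $m_\alpha < 2\ell_\alpha$ for $\alpha \in \{\nint{i+1}, \ldots, \nint{i+r}\}$ excludes $\nint{i+r+1} \in I \cup J$ (the $I$-case from a $\mu_{R^I_J}(\canon_{R^I_J}) = n - 1$ collision via Lemma \ref{lem2}(1) and Remark \ref{new2.25}, the $J$-case via the same divisibility obstruction lifted one step further), gives $\ell_{\nint{i+r+1}} \le m_{\nint{i+r+1}}$, and extracts the explicit form of $f_{r+3}$. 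The principal obstacle throughout is precisely this $J$-exclusion: the coarse quotient $R^I_J/(u_1, \ldots, u_n)$ is blind to $J$-membership, so the finer normal-form computation inside the one-dimensional ring $k[[X_2, Z_2]]/(X_2^{m_2}(X_2^{\ell_2} + Z_2))$ (and its analogues in the induction) is what actually rules out a spurious $\mathbf{f}$ and drives every step of the propagation.
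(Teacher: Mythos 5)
Your argument is correct in its conclusions, but one of its central claims is technically wrong and the route you take around it is much longer than the paper's.

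The error is in the description of the quotient $R^I_J/(u_1,\dots,u_n)$. Eliminating $Z_j$ via $Z_j=-X_j^{\ell_j}$ for $j\in J$ substitutes into the remaining relations; it does \emph{not} introduce $X_j^{\ell_j}$ as a new relation. The correct quotient is $k[[X_1,\dots,X_n,\{Y_i\}_{i\in I}]]/(\{X_r^{\ell_r}\,:\,r\notin J\})$, not $k[[\dots]]/(X_1^{\ell_1},\dots,X_n^{\ell_n})$. This matters: if $2\in J$, then $X_2$ is a \emph{free} variable in the true quotient, so $v_2=x_2^{m_2}$ cannot vanish there, and the $J$-exclusion actually falls out immediately. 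Your remark that ``the coarse quotient is blind to $J$-membership'' is thus mistaken, and it is precisely that mistaken belief that drove you to construct the long supplementary argument (iterating the column relations to get $x_2^{(n-2)m_2}\in(x_1^{n-3})$ and projecting onto $k[[X_2,Z_2]]/(X_2^{m_2}(X_2^{\ell_2}+Z_2))$). That supplementary argument is in fact valid as far as I can tell, so your proof does reach the right conclusion, but it is considerably heavier than needed.

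The paper's route is shorter on both counts: for $2\notin I$, apply Lemma \ref{lem2}(1) with $\varphi=1$ to conclude $f_{n-1}=v_2=f_2$, which collides with the minimal presentation of $\canon_{R^I_J}$ (Remark \ref{new2.25}) unless $n-1=2$; for $2\notin J$, read off the single column relation $f_2 v_3 - f_3 u_2 = 0$ and use that $x_2^{m_2}, u_2$ is a regular sequence together with Remark \ref{new4.0}(2), which kills the $J$-case in two lines and also gives $\ell_2\le m_2$ and the closed form of $f_3$ at once. Your approach to establishing $f_1=x_1$, $1\notin J$, and $\ell_1=1$ essentially matches the paper (the choice between the regular sequence $\{x_1,y_1\}$ and $\{v_1,x_1\}$, or between invoking Lemma \ref{lem1} and Remark \ref{new4.0}(2), is cosmetic), and your high-level description of the induction for (4)--(6) is consistent with the paper's, which likewise proceeds one index at a time using the same column-relation and regular-sequence mechanism. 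In short: fix the quotient description, after which the coarse quotient already rules out $2\in J$ and your elaborate back-up argument becomes unnecessary; or, more economically, replace it entirely with the paper's regular-sequence calculation on the single column relation $f_2v_3-f_3u_2=0$.
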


\begin{proof}
	We may assume $i=1$. 
	First, suppose that $f_\alpha = x_1$ for $2 \le \alpha \le n-1$.
	Then $f_{\alpha-1} v_1 - x_1 u_n = 0$. Noting $u_n \notin (v_1)$ by Remark \ref{new4.0} (2), we have $1 \notin I$. Hence $m_1 \ge 2$. 
	Therefore $$f_{\alpha-1} x_1^{m_1} - x_1 u_n = x_1(f_{\alpha-1} x_1^{m_1-1} - u_n) = 0.$$ 
	Thus $u_n \in (x_1^{m_1-1})$ which is impossible by Remark \ref{new4.0} (2), whence $f_1 = x_1$.
	
	Now, suppose that $1 \in J$. 
	Then, since $x_1 v_2 - f_2 u_1 = 0$ and $x_1, u_1$ is an $R^I_J$-regular sequence, we have $v_2 \in (u_1)$, a contradiction.
	Hence $1 \notin J$. 
	On the other hand, if $\ell_1 \ge 2$, then the same equality $x_1v_2 - f_2 u_1 = 0$ implies that $v_2 \in (x_1^{\ell_1-1})$ but this is also a contradiction. 
	Therefore $\ell_1 =1$. Thus $u_1=x_1$, whence we get $f_2 = v_2$ by the equality $x_1v_2 - f_2 u_1 = 0$. 
	
	Next, we assume $2 \in I$. Then $f_2 = v_2 = x_2^{m_2} + y_2$. 
	Hence by Lemma \ref{lem2} (1) and Remark \ref{new2.25}, we have $2 = n-1$, which is a contradiction. 
	Therefore $2 \notin I$ and $f_2 = v_2 = x_2^{m_2}$.	

	Now, assume that $2 \in J$. 
	Then $$f_2 v_3 - f_3 u_2 = x_2^{m_2} v_3 - f_3 (x_2^{\ell_2} + z_2) =0.$$
	Hence the fact that $x_2^{m_2}, u_2 = x_2^{\ell_2} + z_2$ is an $R^I_J$-regular sequence leads to a contradiction that $v_3 \in (u_2)$. 
	Therefore $2 \notin J$ and $x_2^{m_2} v_3 - f_3x_2^{\ell_2} = 0$. If $m_2 < \ell_2$, then $v_3 \in (x_2^{\ell_2-m_2}) \subseteq (x_2)$ which is impossible. 
	Hence $m_2 \ge \ell_2$ and $f_3 = v_3 x_2^{m_2 -\ell_2}$. Thus, we complete proofs of (1), (2), and (3). 
	
	Suppose $n\ge 5$ and $m_\alpha < 2\ell_\alpha$ for all $2 \le \alpha \le n-3$. We then prove that (1), (2), and (3) imply that (4), (5), and (6) hold in the case of $\alpha=i+2=3$. 
	To prove (4), assume $3 \in I$. Then $f_3 = v_3 x_2^{m_2 -\ell_2}$ implies $3 =n-1$ by Lemma \ref{lem2} (1) and Remark \ref{new2.25}. Thus $n=4$, but this contradicts our assumption. Hence $3 \notin I$.
	To prove (5) and (6), suppose that $3 \in J$. 
	Then $$f_3 v_4 - f_4 u_3 = (x_3^{m_3} x_2^{m_2-\ell_2}) v_4 - f_4 (x_3^{\ell_3} + z_3) = 0.$$ 
	Since $x_3^{m_3}, u_3=x_3^{\ell_3}+z_3$ is an $R^I_J$-regular sequence, we have $x_2^{m_2-\ell_2}v_4 \in (u_3)$ which contradicts to Lemma \ref{lem1} (1), because $m_2 - \ell_2 < \ell_2$. 
	Hence $3 \notin J$ and $f_4 = v_4 x_3^{m_3-\ell_3} x_2^{m_2-\ell_2}$. Thus, (4), (5), and (6) hold in the case of $\alpha=3$.
	Then assertions (4), (5), and (6) can be shown inductively.
\end{proof}

By arranging the form of the matrix $D^I_J$, the next proposition follows from Proposition \ref{prop2p>0}.

\begin{Proposition}\label{prop2q>0}
	Let $1\ \le j \le n$. Suppose that $j \in J$ or $\ell_j \ge 2$. 
	If there exists $\mathbf{f} \in \Lambda$ such that $x_j \in \mathbf{f}$, then we have the following.
	\begin{enumerate}[{\rm (1)}]
		\item $j \notin I$ and $m_j = 1$.
		\item $\nint{j-1} \notin I \cup J$ and $m_{\nint{j-1}} \le \ell_{\nint{j-1}}$.
		\item $f_{n-1} = x_j$, $f_{n-2} = x_{\nint{j-1}}^{\ell_{\nint{j-1}}}$, and $f_{n-3} = u_{\nint{j-2}}x_{\nint{j-1}}^{m_{\nint{j-1}} - \ell_{\nint{j-1}}}$.
	\end{enumerate}
	If we assume $n \ge 5$ and $\ell_\alpha < 2m_\alpha$ for all $\alpha \in \{\nint{j-1}, \nint{j-2},\ldots , \nint{j-r}\}$ $(1\le r \le n-4)$, the we furthermore have the following.
	\begin{enumerate}[{\rm (1)}] \setcounter{enumi}{3}
		\item $\alpha \notin I \cup J$ for all $\alpha \in \{\nint{j-2}, \nint{j-3},\ldots , \nint{j-r-1}\}$.
		\item $m_{\alpha} \ge \ell_{\alpha}$ for all $\alpha \in \{\nint{j-2}, \nint{j-3},\ldots , \nint{j-r-1}\}$.
		\item $f_r = v_{\nint{j-n+r+1}} \prod_{\alpha=1}^{n-r-2} x_{\nint{j-\alpha}}^{\ell_{\nint{j-\alpha}} - m_{\nint{j-\alpha}}}$ for all $n-3-r \le r \le n-4$.
	\end{enumerate}
\end{Proposition}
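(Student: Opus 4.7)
The plan is to derive Proposition \ref{prop2q>0} directly from Proposition \ref{prop2p>0} by exploiting a natural symmetry of the matrix $D^I_J$. I would introduce the involution $\sigma : \{1,\ldots,n\} \to \{1,\ldots,n\}$ defined by $\sigma(r) = \nint{2-r}$, and define a primed version of all the data: $I' = \sigma(J)$, $J' = \sigma(I)$, $X'_r = X_{\sigma(r)}$, $m'_r = \ell_{\sigma(r)}$, $\ell'_r = m_{\sigma(r)}$, together with $Y'_r = Z_{\sigma(r)}$ for $r \in I'$ and $Z'_r = Y_{\sigma(r)}$ for $r \in J'$. Unwinding the definitions shows that the matrix $D^{I'}_{J'}$ built from this primed data is obtained from $D^I_J$ by swapping the two rows and then reversing the order of the columns. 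Since a row swap only negates each $2$-minor and a column reversal is a permutation of columns, $\detid_2(D^{I'}_{J'}) = \detid_2(D^I_J)$, so there is a canonical isomorphism $R^{I'}_{J'} \cong R^I_J$ sending $x'_r$ to $x_{\sigma(r)}$.

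Under this isomorphism, the Eagon--Northcott presenting matrix of $\canon_{R^{I'}_{J'}}$ is obtained from $N^I_J$ by reversing both the row order and the column-block order (of length $n$), up to signs. Consequently, the set $\Lambda'$ associated with $R^{I'}_{J'}$ is identified with $\Lambda$ via the reversal $(f_1,\ldots,f_{n-1}) \mapsto (f_{n-1},\ldots,f_1)$. I would then translate the hypothesis of Proposition \ref{prop2q>0} (namely, ``$j \in J$ or $\ell_j \ge 2$'', together with ``$x_j \in \mathbf{f}$ for some $\mathbf{f} \in \Lambda$'') into ``$\sigma(j) \in I'$ or $m'_{\sigma(j)} \ge 2$'', together with ``$x'_{\sigma(j)} \in \mathbf{f}'$ for some $\mathbf{f}' \in \Lambda'$''; this is exactly the hypothesis of Proposition \ref{prop2p>0} applied to $R^{I'}_{J'}$ at the index $i = \sigma(j) = \nint{2-j}$. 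Applying that proposition and translating its conclusions back through $\sigma$ and through the reversal $k \leftrightarrow n-k$ yields statements (1)--(6) of Proposition \ref{prop2q>0}.

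The main obstacle I expect is the bookkeeping: one must verify that the row-swap-plus-column-reversal on $D^I_J$ induces the claimed reversal on the Eagon--Northcott matrix $M^I_J$ with the correct sign conventions, so that $\Lambda$ and $\Lambda'$ are genuinely identified via $f_k \leftrightarrow f'_{n-k}$ rather than via a more intricate permutation. Once this identification is in hand, together with the translation dictionary $I \leftrightarrow J'$, $J \leftrightarrow I'$, $m \leftrightarrow \ell'$, $\ell \leftrightarrow m'$, $Y \leftrightarrow Z'$, $Z \leftrightarrow Y'$, and $k \leftrightarrow n-k$, each of the six conclusions of Proposition \ref{prop2q>0} matches, term by term, the corresponding conclusion of Proposition \ref{prop2p>0}, completing the reduction.
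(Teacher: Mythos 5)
Your proposal is correct and is precisely the symmetry argument the paper has in mind when it says ``by arranging the form of the matrix $D^I_J$, the next proposition follows from Proposition \ref{prop2p>0}''; you have simply made the row-swap-plus-column-reversal involution $\sigma(r)=\nint{2-r}$ and the induced identification $f_k\leftrightarrow f'_{n-k}$ explicit, which the paper leaves to the reader. One small remark: carrying your dictionary through part (3) of Proposition \ref{prop2p>0} yields the exponent $\ell_{\nint{j-1}}-m_{\nint{j-1}}$ (consistent with the inequality $m_{\nint{j-1}}\le\ell_{\nint{j-1}}$ from part (2)), so the printed $m_{\nint{j-1}}-\ell_{\nint{j-1}}$ in the statement is a sign typo rather than a mismatch in your argument.
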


\begin{Lemma}\label{lambda}
 	Suppose that $R^I_J$ is nearly Gorenstein.
	We then have the following.
	\begin{enumerate}[{\rm (1)}]
		\item $I\cap J = \emptyset$.
		\item For all $i \in I$, there exist $\mathbf{f}, \mathbf{g} \in \Lambda$ such that  $v_i = x_i^{m_i} + y_i \in \mathbf{f}$ and $x_i\in \mathbf{g}$.
		\item For all $j \in J$, there exist $\mathbf{f}, \mathbf{g} \in \Lambda$ such that $v_i = x_j^{\ell_j} + z_j \in \mathbf{f}$ and $x_j\in \mathbf{g}$.
	\end{enumerate}
\end{Lemma}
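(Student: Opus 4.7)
The plan is to establish parts (2) and (3) first using the graded machinery of Section~2, and then to deduce (1) from (2) via Proposition~\ref{prop2p>0}. Parts (2) and (3) being entirely symmetric (exchange the roles of $V_r, U_r$ and invoke Proposition~\ref{prop2q>0} in place of Proposition~\ref{prop2p>0}), I will describe the approach for (2) only.

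Fix $i \in I$. Since $R^I_J$ is nearly Gorenstein, $\fkm \subseteq \trace_{R^I_J}(\canon_{R^I_J})$, so both $x_i$ and $y_i$ belong to the trace ideal; in particular so does $v_i = x_i^{m_i} + y_i$. The key step is to upgrade these membership statements into the stronger existence of vectors in $\Lambda$ having $v_i$ and $x_i$ respectively as entries. I would exploit the fact that $R^I_J$ is the completion of the graded ring $A^I_J$ at its irrelevant maximal ideal, and work with the $\mathbb{Z}$-grading on $A^I_J$ in which $\deg X_j = a_j$, $\deg Y_{i_\alpha} = m_{i_\alpha} a_{i_\alpha}$, and $\deg Z_{j_\beta} = \ell_{j_\beta} a_{j_\beta}$.

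First I would analyze the graded piece $[A^I_J]_{m_i a_i}$, which contains both $x_i^{m_i}$ and $y_i$; these are $k$-linearly independent in $A^I_J$ because the $2$-minors generating $J^I_J$ are homogeneous of degrees that cannot produce a nontrivial $k$-combination of $x_i^{m_i}$ and $y_i$ in degree $m_i a_i$. Since every monomial of this piece lies in $\fkm$ and hence in the trace ideal, I would apply Proposition~\ref{graded} (or Corollary~\ref{graded dim2} in the generic two-dimensional case) to obtain row vectors in $\Lambda$ whose distinguished entries form a $k$-basis of $[A^I_J]_{m_i a_i}$. A $k$-linear combination of these rows then produces $\mathbf{f} \in \Lambda$ whose entry at the relevant position is precisely $v_i$. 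An analogous analysis in degree $a_i$, where by minimality of $a_1, \ldots, a_n$ the piece $[A^I_J]_{a_i}$ is spanned by $x_i$ together at most with $y_i$ (when $m_i = 1$) and $z_i$ (when $\ell_i = 1$), yields $\mathbf{g} \in \Lambda$ with $x_i$ as an entry. Finally, for (1): suppose $i \in I \cap J$; applying (2) yields $\mathbf{g} \in \Lambda$ with $x_i \in \mathbf{g}$, and since $i \in I$ the hypothesis of Proposition~\ref{prop2p>0} is satisfied, forcing $i \notin J$ and contradicting $i \in J$. Hence $I \cap J = \emptyset$.

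The main obstacle I foresee is the bookkeeping for the $k$-linear combination in the previous paragraph: the distinguished entries produced by Proposition~\ref{graded} could a priori sit at different positions in the row vectors, and the associated rows could have different total degrees under the graded structure on $\Lambda$. I plan to resolve this using degree considerations inherited from the shifts in the minimal graded presentation of $\canon_{R^I_J}$ from the Eagon--Northcott resolution, together with Lemma~\ref{lem2}, which shows that any $\mathbf{f} \in \Lambda$ containing $v_i$ as an entry must have $f_{n-1} = v_i$. This pins down the relevant position, making the degree alignment automatic and the linear combination well-defined.
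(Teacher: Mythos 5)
Your plan reverses the paper's order — it proves (2) and (3) first and then deduces (1) — and this is where the argument breaks down. You assert that the analysis of $[A^I_J]_{a_i}$ "yields $\mathbf{g}\in\Lambda$ with $x_i$ as an entry" for every $i\in I$, and you then feed this into Proposition~\ref{prop2p>0} to rule out $i\in J$. But for $i\in I\cap J$ with $m_i=1$ or $\ell_i=1$, no such $\mathbf{g}$ can exist: Proposition~\ref{prop2p>0}(1), read in the contrapositive, says that if $i\in I$ and $i\in J$ then $x_i$ appears in no element of $\Lambda$ at all. So you cannot establish (2) for such $i$ and then invoke prop2p>0 to get a contradiction — the intermediate claim you need is provably false in exactly the case you want to rule out. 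The contradiction the paper actually extracts is of a different nature: when $m_i=1$ or $\ell_i=1$, the degree piece $[A^I_J]_{a_i}$ has dimension $2$ or $3$ (it picks up $y_i$ and/or $z_i$), while a careful case analysis of the positions at which homogeneous elements of that degree can sit in a vector of $\Lambda$ shows the admissible entries span only a $1$- or $2$-dimensional subspace (things like $k(x_i+y_i)+k(x_i+z_i)$). Proposition~\ref{graded} then fails, contradicting near-Gorensteinness. (In the remaining sub-case $m_i,\ell_i\ge 2$ the degree piece is $1$-dimensional and your mechanism does work, but that is the easy case.) Once (1) is known, the paper's proof of (2) uses $i\notin J$ crucially: the degree piece has dimension at most $2$, Corollary~\ref{graded dim2} provides a second vector with $c\ne d$, and a short computation forces $d=0$, recovering $x_i$ as an entry.

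Your treatment of the $v_i$ part is closer to the mark, but the mechanism you propose — applying Proposition~\ref{graded} and then taking a $k$-linear combination of rows — is not quite what is needed, because an entry of the form $cx_i^{m_i}+dy_i$ with $c\ne d$ is not a multiple of $v_i$, so Lemma~\ref{lem2} does not by itself pin down its position. The paper's route is instead to show directly that any entry $cx_i^{m_i}+dy_i$ with $d\ne0$ must satisfy $c=d$ (by the same regular-sequence computations used in the proof of (1)); combined with the spanning requirement from Proposition~\ref{graded}, this produces $v_i$ as an entry without needing to combine rows. You would need to supply that $c=d$ argument rather than relying on Lemma~\ref{lem2} for position-alignment. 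In short: fix the order by proving (1) first via the dimension-versus-admissible-entries mismatch, and replace the linear-combination step for $v_i$ by the explicit $c=d$ computation.
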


\begin{proof}
	Notice that $[R^I_J]_h \subseteq [\trace_{R^I_J}(\canon_{R^I_J})]_h$ for all $0<h \in \mathbb{N}$ because $R^I_J$ is nearly Gorenstein. 

	(1) Suppose that $I \cap J \ne \emptyset$ and let $i \in I \cap J$. We may assume $i=1$.
	By Proposition \ref{prop2p>0} (1), $x_1 \notin \mathbf{f}$ for all $\mathbf{f} \in \Lambda$. It follows that $\dim_k [R^I_J]_{a_1} \ge 2$ by Corollary \ref{graded dim1}. On the other hand, by noting that $a_1, \ldots, a_n$ minimally generates $H$, possible monomials of degree $a_1$ are only $X_1$, $Y_1$, and $Z_1$ in $T^I_J$ (see the beginning of Subsection \ref{subsection4.4}). Since the degrees of the latter two monomials are $m_1 a_1$ and $\ell_1 a_1$ respectively, we have $m_1 = 1$ or $\ell_1 = 1$. 
	
	First, assume that $m_1 = \ell_1 =  1$. Hence, $\dim_k [R^I_J]_{a_1} =3$. Put $\varphi = c x_1 + d y_1 + e z_1$ $(c,d,e \in k)$.
	Suppose that $0 \ne \varphi \in \mathbf{f}$ for some $\mathbf{f} \in \Lambda$.
	If $f_\alpha = \varphi$ for $2 \le \alpha \le n-1$, then 
	$$
	f_{\alpha -1} (x_1 + y_1) - \varphi u_n = f_{\alpha -1} (x_1 +y_1) - (cx_1 + dy_1 + ez_1) u_n = x_1(f_{\alpha-1}-cu_n) + y_1(f_{\alpha-1}-du_n) -eu_nz_1 = 0.$$
	Therefore $eu_n \in (x_1, y_1)$ because $x_1, y_1, z_1$ is an $R^I_J$-regular sequence, and hence $e=0$.
	We then have $f-cu_n \in (y_1)$ and $f-du_n \in (x_1)$. Thus $(c-d)u_n \in (x_1, y_1)$ which implies $c-d =0$. Thus $\varphi = c(x_1 + y_1)$ if $f_\alpha = \varphi$ for $2 \le \alpha \le n-1$.
	On the other hand, if $f_\alpha = \varphi$ for $1 \le \alpha \le n-2$, then the same argument leads to $\varphi = c(x_1 + z_1)$. 
	Hence, possible forms of $\varphi\in \mathbf{f}$ for some $\mathbf{f}\in \Lambda$ are in $k(x_1+y_1) + k(x_1+z_1)$. It follows that $[R^I_J]_{a_1} = k(x_1+y_1) + k(x_1+z_1)$ by Proposition \ref{graded}. This is a contradiction to $\dim_k  [R^I_J]_{a_1} = 3$. Hence either $m_1=1$ or $\ell_1=1$ holds.  Thus, we may assume $m_1 = 1$ and $\ell_1 \ge 2$.

	Suppose $\varphi = cx_1 + dy_1 \in \mathbf{f}$ for some $c,d \in k$ and $\mathbf{f} \in \Lambda$.
	If $f_\alpha = \varphi$ for $2 \le \alpha \le n-1$, we yield $c=d$ by the same argument as above.
	Suppose $f_\alpha = \varphi$ for $1\le \alpha \le n-2$. 
	Then $$\varphi v_2 -f_{\alpha+1} u_1 = (cx_1 + dy_1)v_2 - f_{\alpha+1}(x_1^{\ell_1} + z_1) = x_1(cv_2 -x_1^{\ell_1 - 1}) + y_1(dv_2) - f_{\alpha+1}z_1 = 0.$$
	Hence $dv_2 \in (x_1, z_1)$, because $x_1,z_1,y_1$ is an $R^I_J$-regular sequence.
	Thus $d=0$, but this contradicts the fact that $x_1 \notin \mathbf{f}$ for every $\mathbf{f} \in \Lambda$ as we have seen above.
	
	(2) We may assume $i=1$. By (1), we have $1 \notin J$.
	Since $R^I_J$ is nearly Gorenstein and $\dim_k [R^I_J]_{m_1 a_1} =2$, there should exist $c, d\in k$ such that $cx_1^{m_1} + dy_1 \in \mathbf{f}$ for some $\mathbf{f} \in \Lambda$.
	By the same argument as above, if $d \ne 0$, then we have $c=d$. Hence $x_1^{m_1} + y_1 \in \mathbf{f}$ for some $\mathbf{f}\in \Lambda$.
	
	If $m_1 \ge 2$, then $\dim_k [R^I_J]_{a_1} = 1$. Hence, by Corollary \ref{graded dim1} and $R^I_J$ is nearly Gorenstein, $x_1 \in \mathbf{g}$ for some $\mathbf{g} \in \Lambda$. So, the proof in this case is done.
	We now suppose $m_1 =1$.
	Then $\dim_k [R^I_J]_{a_1} = 2$. Hence, by Corollary \ref{graded dim2}, there exist $c,d \in k$ such that $c \ne d$ and $cx_1 + dy_1 \in \mathbf{g} $ for some $\mathbf{g}= \begin{pmatrix}g_1 & g_2 & \cdots & g_{n-1}\end{pmatrix} \in \Lambda$, because we have already shown that $x_1 + y_1 \in \mathbf{f}$.
	By the same argument as in the proof of (1), the condition $c \ne d$ implies that $g_1 = cx_1 + dy_1$.
	Hence $$(cx_1 + dy_1) v_2 - g_2 x_1^{\ell_1} = x_1 ( cv_2 - g_2x_1^{\ell_1-1}) + y_1 (dv_2) = 0.$$ 
	Hence $d=0$ because $v_2 \notin (x_1)$. Therefore $c \ne 0$ and $cx_1 \in \mathbf{g}$. 
	We can also prove (3) in a similar way.
\end{proof}

Notice that, by Lemma \ref{lambda}, in each case in Theorems \ref{AGcase} and \ref{nonAGcase}, if $R^I_J$ is nearly Gorenstein, then there exist $\mathbf{f}_i, \mathbf{g}_i \in \Lambda$ such that $x_i \in \mathbf{f}_i$ and $v_i \in \mathbf{g}_i$ for all $i \in I$ and there exist $\mathbf{f}'_j, \mathbf{g}'_j \in \Lambda$ such that $x_j \in \mathbf{f}'_j$ and $u_j \in \mathbf{g}'_j$ for all $j \in J$.
Then we are now in the position to give proofs of Theorems \ref{AGcase} and \ref{nonAGcase}.

\begin{proof}[Proof of Theorem \ref{AGcase}]
	First, notice that $m_\alpha = 1 < 2\ell_\alpha$ for all $1\le \alpha \le n$. 

	(1) Suppose that $R^{\{i\}}$ is nearly Gorenstein. We may assume $i=1$. Then, by Proposition \ref{prop2p>0} (2) and (5), $\ell_1=1$ and $\ell_\alpha \le m_\alpha = 1$ for all $2 \le \alpha \le n-2$. 
	
	\medskip
	
	(2) Suppose that $R_{\{i\}}$ is nearly Gorenstein. We may assume $i=1$. 
	Then, by Proposition \ref{propq>0} (2) and (5), $\ell_\alpha \le m_\alpha = 1$ for all $2 \le \alpha \le n-2$.

	\medskip

	(3) Suppose that $R_{\{i,j\}}$ is nearly Gorenstein. We may assume $i=1$.
		The assertion (2) and Lemma \ref{new4.5} show that $\ell_2 = \ell_3 = \cdots = \ell_{n-2} = 1$. 
		By Proposition \ref{propq>0} (1) and (4), we get $2,\ldots, n-2 \not\in J=\{i, j\}$; hence $j=n-1$ or $j=n$.
		
		If $j=n$, then $\nint{j+1} = 1 \notin J=\{1,n\}$ by Proposition \ref{propq>0} (1), but this is impossible.
		Hence $j=n-1$. In this case, if $n \ge 5$, then $1 =\nint{j+2} \in \{\nint{j+1}, \ldots , \nint{j+n-3}\}$, which is also impossible by Proposition \ref{propq>0} (4). Hence $n=4$ and $j=n-1=3$. 	

	\medskip
	
	(4) Suppose $R^{\{i\}}_{\{j\}}$ is nearly Gorenstein. We may assume $i=1$.
		By Lemma \ref{lambda} (1), we have $j \ne 1$. 
		On the other hand, Proposition \ref{prop2p>0} (2) and (4) leads $j \notin \{2, 3, \ldots, n-2\}$.
		Hence $j=n-1$ or $j=n$. 
		If $j=n$, then we get a contradiction that $1 \notin \{1,2,\ldots , n-3\}$ by Proposition \ref{propq>0} (1) and (4). Hence $j = n-1$.
		Moreover, if $n \ge 5$, then we again get a contradiction that $1 =\nint{n+1} \notin \{n, \nint{n+1}, \ldots, \nint{2n-3}\}$ by Proposition \ref{propq>0} (1) and (4). Hence $n=4$ and $j=n-1= 3$.
		By Lemma \ref{new4.5}, assertions (1) and (2) imply $\ell_1=\ell_2 = \ell_4=1$.
	
	\medskip

	(5) Suppose $R^{\{i,j\}}$ is nearly Gorenstein. We may assume $i=1$. 
	By Proposition \ref{prop2p>0} (2) and (4), we have $2,\ldots , n-2 \notin I$. Hence $j=n-1$ or $j=n$.
	However, in each case, we have $1 \notin I = \{i,j\}$ by Proposition \ref{prop2p>0} (2) and (4).

	\medskip
	
	(6) Suppose $R^{\{k\}}_{\{i,j\}}$ is nearly Gorenstein, then $R^{\{k\}}_{\{i\}}$ and $R^{\{k\}}_{\{j\}}$ are also nearly Gorenstein by Lemma \ref{new4.5}.
	By (4), we have $n=4$ and $(k,i), (k,j) \in \{(1,3), (2,4), (3,1), (4,2)\}$. Then $i=j$, but this is impossible.

	\medskip
	
	(7) Suppose $R_{\{i,j,k\}}$ is nearly Gorenstein. Then, by Lemma \ref{new4.5} and (3), $n=4$ and $\nint{i-j} = \nint{j-k} = \nint{k-i} =2$. This is impossible. 
\end{proof}

\begin{proof}[Proof of Theorem \ref{nonAGcase}]
	Notice that $m_i =1 < 2\ell_i$ for all $2 \le i \le n$ and $\ell_i =1 < 2m_i$ for all $1 \le i \le n-2$, in this case.

	(1) Suppose $R^{\{i\}}$ is nearly Gorenstein. Assume $2 \le i \le n-1$. Then, $\ell_\alpha< 2 m_\alpha$ for all $1 \le \alpha \le n-2$.
	By Proposition \ref{propp>0} (2) and (5), $m_1 \le \ell_1$. This is a 	contradiction to the assumption. Thus, $i=1$ or $i=n$. 
	Furthermore, by Proposition \ref{prop2p>0} (1), $\ell_n = 1$ when $i=n$.

	\medskip
	
	(2) Suppose $R_{\{i\}}$ is nearly Gorenstein. 
	By Proposition \ref{prop2q>0} (1) and (2), we have $i \ne 1, 2$. In fact, if $i=1$, then we obtain a contradiction that $m_1 = 1$, and if $i=2$, then we get $m_1 \le \ell_1 = 1$, whence $m_1=1$.
	Assume $3 \le i \le n-2$. Since $m_i \le 2\ell_i$ for all $2\le i \le n$, we can apply Proposition \ref{propq>0} with $r=n-4-(i-3)$ if $n\ge 5$. Hence, by Proposition \ref{propq>0} (2) and (5), $\ell_\alpha \le m_\alpha$ for all 
	$i+1 \le \alpha \le i+r+1=n$. In particular, $\ell_{n-1} \le m_{n-1}=1$ and $\ell_n=m_n=1$. This is a contradiction to the assumption that $\ell_{n-1} \ge 2$ or $\ell_n \ge 2$.
	Therefore, $i=n-1$ or $i=n$.
	Moreover, if $i=n-1$, then $\ell_n = 1$ by Proposition \ref{propq>0} (2).

	\medskip
	
	(3) Suppose $R^{\{i,j\}}$ is nearly Gorenstein.
	Then $R^{\{i\}}$ and $R^{\{j\}}$ are nearly Gorenstein  by Lemma \ref{new4.5}.
	Therefore $i=1$, $j=n$, and $\ell_n = 1$ by (1). 
	However, by Proposition \ref{propp>0} (1), we know that $\nint{i-1} = n \notin I$, which is a contradiction.
	Thus $R^{\{i,j\}}$ is not nearly Gorenstein.

	\medskip

	(4) Suppose $R^{\{i\}}_{\{j\}}$ is nearly Gorenstein.
	Since $R^{\{i\}}$ and $R_{\{j\}}$ are nearly Gorenstein by Lemma \ref{new4.5}, we know by (1) and (2) that ``$i=1$ or $i=n$'' and ``$j=n-1$ or $j=n$''.
	Suppose $i=n$. Then $\nint{i-1} = n-1 \notin J$ by Proposition \ref{propp>0} (1). Hence $j=n$, but this contradicts Lemma \ref{lambda} (1).
	Hence $i=1$. Then, again by Proposition \ref{propp>0} (1), $\nint{i-1} = n \notin J$. Thus we yield $j = n-1$ and $\ell_n = 1$.
	However, if $n \ge 5$, then, by applying Proposition \ref{propp>0} (4) in the case of $r=1$, we have $\nint{1-2} = n-1 \notin J$, which is a contradiction.
	Therefore $n=4$, $i=1$, $j=n-1=3$, and $\ell_n = 1$ as desired.
\end{proof}

\begin{Remark}
	According to Propositions \ref{n3AGL} and \ref{n3nonAGL} and Theorems \ref{newAGcase} and \ref{newnonAGcase}, the dimension of the nearly Gorenstein ring obtained by the method in this section is 
	\begin{itemize}
		\item at most 4, if $n=3$,
		\item at most 3, if $n=4$, and
		\item at most 2, if $n\ge 5$.
	\end{itemize}
	
	Higher-dimensional nearly Gorenstein rings are discussed in some situations, e.g., Hibi rings (\cite{HHS}), edge rings(\cite{HHS2}), Ehrhart rings of polytopes (\cite{HKMM, Mz}), and Stanley-Reisner rings (\cite{Ms}).
	Considering these results in conjunction with our current findings, we notice that nearly Gorenstein rings that are not Gorenstein are rare.
	This suggests that the nearly Gorenstein property is indeed close to the Gorenstein property.
\end{Remark}

\begin{Example}
The following are nearly Gorenstein rings with $\dim R_1 = \dim R_2 = 4$, $\dim R_3=3$, and $\dim R_4 = 2$ under the condition that the quotient ring of them by all $y_i$'s and $z_j$'s are isomorphic to numerical semigroup rings.

\begin{itemize}
	\item $R_1 = k[[X_1,X_2,X_3,Z_1,Z_2,Z_3]]/\detid_2\begin{pmatrix}X_2 & X_3 & X_1\\X_1^{\ell_1} + Z_1 & X_2^{\ell_2} + Z_2 & X_3^{\ell_3} + Z_3\end{pmatrix}$
	\item $R_2 = k[[X_1,X_2,X_3,Y_1,Z_2,Z_3]]/\detid_2\begin{pmatrix}X_2 & X_3 & X_1^{m_1} + Y_1\\X_1 & X_2^{\ell_2} + Z_2 & X_3^{\ell_3} + Z_3\end{pmatrix}$
	\item $R_3 = k[[X_1, X_2, X_3, X_4, Y_1, Z_3]]/\detid_2\begin{pmatrix}X_2 & X_3 & X_4 & X_1^{m_1} + Y_1\\X_1 & X_2 & X_3^{\ell_3} + Z_3 & X_4\end{pmatrix}$
	\item $R_4 = k[[X_1, X_2, \ldots , X_n, Y_1]] / \detid_2\begin{pmatrix}
		X_2 & \cdots & X_{n-1} & X_n & X_1^{m_1} + Y_1\\
		X_1 & \cdots & X_{n-2} & X_{n-1}^{\ell_{n-1}} & X_n^{\ell_n}
		\end{pmatrix}$
\end{itemize}
\end{Example}

\begin{Example}
	When $\dim R=1$, every almost Gorenstein local ring is nearly Gorenstein; however, in higher-dimensional cases, it does not hold.
	The theory discussed in this section also readily provides such examples.
	
	Let $n \ge 3$ be an integer and let $\ell_1, \ell_{n-1}, \ell_n \ge 1$ be positive integers such that $\ell_i \ge 2$ for some $i \in \{1, n-1, n\}$.
	Put
	$$R = k[[X_1,X_2,\ldots, X_n,Y_1, Y_2, \ldots , Y_n, Z_1]]/\detid_2\begin{pmatrix}X_2+Y_2 & \cdots & X_n + Y_n & X_1+Y_1\\X_1^{\ell_1} + Z_1 & \cdots  & X_{n-1}^{\ell_{n-1}} & X_n^{\ell_n}\end{pmatrix}.$$
	Then $R$ is of dimension $n+2$ and not a nearly Gorenstein local ring by Lemma \ref{lambda} (1).
	On the other hand, $R$ is an almost Gorenstein local ring by \cite[Theorem 7.8]{GTT}.
\end{Example}


\begin{thebibliography}{99}
	\bibitem{BF} V. Barucci and R. Fr{\"o}berg, {\it One-dimensional almost Gorestein rings}, Journal of Algebra, {\bf 188} (1997), 418--442.

	\bibitem{CGKM} T. D. M. Chau, S. Goto, S. Kumashiro, and N. Matsuoka, {\it Sally modules of canonical ideals in dimension one and 2-AGL rings}, Journal of Algebra, {\bf 521} (2019), 299--331.

	\bibitem{DKT} H. Dao, T. Kobayashi, and R. Takahashi, {\it 	Trace ideals of canonical modules, annihilators of Ext modules, and classes of rings close to being Gorenstein}, Journal of Pure and Applied Algebra, {\bf 225} (2021), 106655.

	\bibitem{DL} H. Dao and H. Lindo, {\it Stable trace ideals and applications}, Collectanea Mathematica, 2023.

	\bibitem{EN} J. A. Eagon, and D. G. Northcott, {\it Ideals Defined by Matrices and a Certain Complex Associated with Them}, Proceedings of the Royal Society of London. Series A, {\bf 269} (1962), 188--204.
	
	\bibitem{EGGHIKMT} N. Endo, L. Ghezzi, S. Goto, J. Hong, S.-i. Iai, T. Kobayashi, N. Matsuoka, and R. Takahashi, {\it Rings with q-torsionfree canonical modules}, arXiv:2301.02635.
	
	
	\bibitem{Ge} A. V. Geramita, {\it Introduction to homological methods in commutative rings} Queen's Papers in Pure and Applied Mathematics, No. 43. Queen's University, Kingston, Ont., 1976. {\rm ii}+352 pp.
	
	\bibitem{GIK} S. Goto, R. Isobe, and S. Kumashiro, {\it Correspondence between trace ideals and birational extensions with application to the analysis of the Gorenstein property of rings}, Journal of Pure and Applied Algebra, {\bf 224} (2020), 747--767.
	
	\bibitem{GIKT} S. Goto, R. Isobe, S. Kumashiro, N. Taniguchi, {\it Characterization of generalized Gorenstein rings}, arXiv:1704.08901.	
	
	\bibitem{GKMT} S. Goto, D. V. Kien, N. Matsuoka, and H. Le Truong, {\it Pseudo-Frobenius numbers versus defining ideals in numerical semigroup rings}, Journal of Algebra, {\bf 508} (2018), 1--15.
	
	\bibitem{GMP} S. Goto, N. Matsuoka, and T. T. Phuong, {\it Almost Gorenstein rings}, Journal of Algebra, {\bf 379} (2013), 355--381.

	\bibitem{GTT} S. Goto, R. Takahashi, and N. Taniguchi, {\it Almost Gorenstein rings – towards a theory of higher dimension}, Journal of Pure and Applied Algebra, {\bf 219} (2015), 2666--2712.
	
	\bibitem{GW} S. Goto and K. Watanabe, {\it On graded rings, I}, Journal of Mathematical Society of Japan, {\bf 30} (1978), 179--213.	

	\bibitem{HKMM} T. Hall, M. K\"{o}bl, K. Matsushita, and S. Miyashita, {\it Nearly Gorenstein polytopes}, arXiv:2303.13084.

	\bibitem{H} J. Herzog, {\it Generators and relations of Abelian semigroups and semigroup rings}, Manuscripta Mathematica, {\bf 3} (1970), 175--193.

	\bibitem{HHS} J. Herzog, T. Hibi, and D. I. Stamate, {\it The trace of the canonical module}, Israel Journal of Mathematics, {\bf 233} (2019), 133--165.
	
	\bibitem{HHS2} J. Herzog, T. Hibi, and D. I. Stamate, {\it Nearly Gorenstein rings arising from finite graphs}, The Electronic Journal of Combinatorics, {\bf 28} (2021), \#P3.28.
	
	\bibitem{HKS} J. Herzog, S. Kumashiro, and D. I. Stamate, {\it The tiny trace ideals of the canonical modules in Cohen-Macaulay rings of dimension one}, Journal of Algebra, {\bf 619} (2023), 626--642.

	\bibitem{KM} D. V. Kien and N. Matsuoka, {\it Numerical semigroup rings of maximal embedding dimension with determinantal defining ideals}, Numerical Semigroups : IMNS 2018, Springer International Publishing, 185--196.

	\bibitem{KT} T. Kobayashi and R. Takahashi, {\it Rings whose ideals are isomorphic to trace ideals}, Mathematische Nachrichten, {\bf 292} (2019), 2252--2261.

	\bibitem{L} H. Lindo, {\it Trace ideals and centers of endomorphism rings of modules over commutative rings}, Journal of Algebra, {\bf 482} (2017), 102--130.

	\bibitem{LP} H. Lindo and N. Pande, {\it Trace ideals and the Gorenstein property}, Communications in Algebra, {\bf 50} (2022), 4116--4121.	

	\bibitem{Ms} S. Miyashita, {\it Levelness versus nearly Gorensteinness of homogeneous domains}, arXiv:2206.00552.

	\bibitem{Mz} M. Miyazaki, {\it Gorenstein on the punctured spectrum and nearly Gorenstein property of the Ehrhart ring of the stable set polytope of an h-perfect graph}, arXiv:2201.02957.

	\bibitem{N} H. Nari, {\it Symmetries on almost symmetric numerical semigroups}, Semigroup Forum, {\bf 86} (2013), 140--154.	
	
	\bibitem{NNW} H. Nari, T. Numata, and K. Watanabe, {\it Genus of numerical semigroups generated by three elements}, Journal of Algebra, {\bf 358} (2012), 67--73.	
	
	\bibitem{V} W. V. Vasconcelos, {\it Computing the integral closure of an affine domain}. Proceedings of the American Mathematical Society {\bf 113} (1991), no. 3, 633--638.

\end{thebibliography}
\end{document}